\newcommand\bSI[1]{{\small[\SI{}{#1}]}}
\newlength\unitwdth
\newlength\numwdth
\newlength\tdima
\newcommand\SIdescr[2]{%
    \setlength\tdima{\linewidth}%
    \addtolength\tdima{\@totalleftmargin}%
    \addtolength\tdima{-\dimen\@curtab}%
    \addtolength\tdima{-\unitwdth}%
    \addtolength\tdima{-\numwdth}%
    \parbox[t]{\tdima}{%
        #1
        \leaders\hbox{$\m@th\mkern \@dotsep mu\hbox{\tiny.}\mkern \@dotsep mu$}%
        \hfill
        \ifhmode\strut\fi
        \makebox[0pt][l]{%
            \makebox[\unitwdth][l]{}%
            \makebox[\numwdth][r]{#2}}}}
\newcommand{\sizefirst}{W_{\operatorname{fi}}} 
\newcommand{\sizelast}{W_{\operatorname{la}}} 
\newcommand{\Parallel}[1]{\mathrm{P}\left(#1\right)} 
\newcommand{\Z}{\mathbb{Z}}
\newcommand{\N}{\mathbb{N}}
\newcommand{\R}{\mathbb{R}}
\newcommand{\mani}{\mathcal{M}}
\newcommand{\NN}{\mathcal{NN}}
\newcommand{\RNN}{\mathcal{RNN}}
\newcommand{\eps}{\varepsilon}
\newcommand{\Realization}{\mathrm{R}}
\DeclareMathOperator*{\supp}{supp}
\newcommand{\lGamma}{\Gamma}
\newcommand{\sconc}{\odot}
\newtheorem{theorem}{Theorem}[section]
\newtheorem*{theorem*}{Theorem}
\newtheorem{remark}[theorem]{Remark}
\newtheorem{definition}[theorem]{Definition}
\newtheorem{proposition}[theorem]{Proposition}
\newtheorem{lemma}[theorem]{Lemma}
\newtheorem*{remark*}{Remark}
\newtheorem*{proposition*}{Proposition}
\numberwithin{equation}{section}
\DeclareMathOperator{\spann}{span \,}
\DeclareMathOperator{\suppp}{supp \,}
\DeclareMathOperator*{\argmin}{arg\,min}
\DeclareMathOperator{\ran}{ran \,}
\definecolor{darkcandyapplered}{rgb}{0.64, 0.0, 0.0}
\title{Deep neural networks can stably solve high-dimensional, noisy, non-linear inverse problems}
\author{Andrés Felipe Lerma Pineda\thanks{University of Vienna,
    Faculty of Mathematics, Kolingasse 14-16,
    1090 Wien,
    e-mail: \texttt{andres.felipe.lerma.pineda@univie.ac.at}
  } \and Philipp Christian Petersen\thanks{University of Vienna,
    Faculty of Mathematics and Research Network Data Science @ Uni Vienna, Kolingasse 14-16,
    1090 Wien,
    e-mail: \texttt{philipp.petersen@univie.ac.at}
  }}
\begin{document}

\date{\ }

\maketitle

\begin{abstract}
   We study the problem of reconstructing solutions of inverse problems when only noisy measurements are available. We assume that the problem can be modeled with an infinite-dimensional forward operator that is not continuously invertible. Then, we restrict this forward operator to finite-dimensional spaces so that the inverse is Lipschitz continuous. For the inverse operator, we demonstrate that there exists a neural network which is a robust-to-noise approximation of the operator. In addition, we show that these neural networks can be learned from appropriately perturbed training data. We demonstrate the admissibility of this approach to a wide range of inverse problems of practical interest. Numerical examples are given that support the theoretical findings.
\end{abstract}
\textbf{Keywords:} Neural networks, inverse problems, noisy data, infinite-dimensional inverse problems, sampling
\textbf{Mathematics Subject Classification:} 35R30, 41A25, 68T05

\section{Introduction}\label{sec:Introduction}

In recent years, there has been an increasing interest in the field of \textit{deep learning} \cite{Goodfellow-et-al-2016, lecun2015deep}. The central concept behind deep learning is that of artificial neural networks. The name is inspired by the concept of neural networks in biology \cite{MP43}, although the functioning of both artificial and biological neural networks possesses profound differences (see \cite{ongie2020deep}). An artificial neural network can be viewed as a function with a composite structure which is typically organized in multiple layers. Neural networks have successfully formed the basis of several computational algorithms capable of performing different tasks ranging from image classification \cite{Krizhevsky2012Imagenet, hu2018squeeze, he2016deep} and voice recognition \cite{amodei2016deep} to natural language processing \cite{vaswani2017attention, bahdanau2014neural}. Nowadays, several researchers are focusing on tailoring and implementing neural-network-based techniques for problems traditionally in the area of applied mathematics, such as partial differential equations \cite{sirignano2018dgm, raissi2019physics, han2018solving}, dynamical systems \cite{brunton2022data, li2022deep}, or inverse problems \cite{arridge2019solving}. 
One theoretical reason for the success of deep learning is the fact that neural networks can efficiently approximate different functions belonging to various function spaces to high accuracies, see \cite{guhring2020expressivity, berner2021modern} for surveys. 


The main subject of this paper is the solution of infinite-dimensional, nonlinear, noisy inverse problems. Inverse problems are ubiquitous in natural sciences and applied mathematics \cite{kirsch2011introduction}. 
In simple terms, inverse problems are concerned with identifying the causes of a given phenomenon when it is (partially) known. However, due to restrictions in time, cost, or even impossibility to measure the causes directly, these causes are often impossible to obtain in practice. 
Many inverse problems of practical relevance are ill-posed in the sense of Hadamard \cite{engl1996regularization}. This implies that numerical reconstruction when only noisy data is available may lead to solutions differing substantially from the solution of the noiseless problem. As a remedy, various methods to solve inverse problems from noisy data have been designed, such as the well-studied \textit{regularization methods} \cite{kirsch2011introduction} and, more recently, data-driven methods \cite{arridge2019solving}. 

Inverse problems present themselves typically in the form of an equation $F(x)=y$ where $F$ is an operator between two suitable sets. The operator $F$ is called the \textit{forward operator} and is nonlinear in many cases of interest. The task of solving the inverse problem is to reconstruct $x$ given $F$ and a potentially inaccurate approximation of $y$. As discussed before, a continuous inverse of $F$ is rarely available and, therefore, noisy data can pose a significant challenge. One way to proceed is to determine when $F^{-1}$ restricted to a suitable domain can be well approximated by \emph{robust neural networks, i.e., neural networks implementing Lipschitz functions, which dampen the noise}. This is the approach taken in this work.

It is important to note that there is strong theoretical evidence for an intrinsic instability of neural-network-based reconstruction of inverse problems \cite{gottschling2020troublesome, colbrook2022difficulty, boche2022limitations}. 
Indeed, even for linear inverse problems it appears to be impossible to stably compute high-accuracy solutions using deep learning. In this work, we advocate for a different point of view. 
While deep neural networks are often found to be incapable of producing high-accuracy solutions even if just very small noise is present, we will demonstrate that \emph{in a high-noise-regime neural networks can produce very robust solutions compared to the noise level}. In fact, the noise is dampened if the trade-off between the intrinsic dimension of the problem and the dimension of the sampling space is favorable. 

We describe our contribution in detail in the following subsection.

\subsection{Our contribution}

We are concerned with establishing the existence and practical computability of noise dampening neural networks that approximate the inverse of a forward operator associated to an infinite-dimensional inverse problem. 
For the sake of simplicity, all inverse problems are modeled by an injective forward operator $F \colon \mathcal{X} \to \mathcal{Y}$, and therefore, each inverse problem possesses at most one solution. 
There is no a priori assumption on the continuity of the inverse of the forward operator $F$, but the spaces $\mathcal{X}$ and $\mathcal{Y}$ will be Banach spaces. 
In fact, we assume that the solution $x$ of the inverse problem $y= F(x)$ lies in a finite-dimensional space $W \subset \mathcal{X}$. Here we only have access to a noisy version of $y$, which we assume to be well approximated by a $y_\delta$ belonging to a $D$-dimensional space $Y_D \subset \mathcal{Y}$. 
The $Y_D$ correspond to approximations of elements from $\mathcal{Y}$ from $D$ measurements. 
Under a set of technical conditions on $\lGamma \subset W$, it can be derived that the restricted operator, $F|_{\lGamma}$, has a Lipschitz continuous inverse $F|_{\lGamma}^{-1}$ which is defined on $\mathcal{M} \coloneqq F|_{\lGamma}(\lGamma)$. 

The main contribution of this work is given by Theorem \ref{thm:LipschitzFunctionOnManifoldBetterEstimate2} and shows that a robust-to-noise neural network that approximates $F|_{\lGamma}^{-
1}$ can be constructed. 
A simplified version that contains the fundamental underlying ideas of the statement is the following:

\begin{theorem}\label{thm:intuitiveThm}

Let $q,d, D \in \mathbb{N}$ with $d < D$. Then, for every Lipschitz continuous function $f: \mani \subset \mathbb{R}^D \to \mathbb{R}^q$ such that $\mani$ is a $d-$dimensional submanifold satisfying some technical conditions, and for $\epsilon>0$, there exist neural network weights $\Phi^{f, \epsilon}_{\mani}$ with $D-$dimensional input and $\mathcal{O}(\epsilon^{-d})$ parameters such that 
$$
    \left\| f - \mathrm{R}\left(\Phi^{f, \epsilon}_{\mani} \right)\right\|_{L^\infty(\mani; \R^q)} \leq \epsilon,
$$
where $\mathrm{R}(\Phi^{f, \epsilon}_{\mani})$ denotes the so-called \textit{realization} of $\Phi^{f, \epsilon}_{\mani}$ (see Definition \ref{def:NeuralNetworks}), which is the function associated to the weights of the network. Furthermore, the resulting neural network is robust to normal-distributed noise, i.e., for all Gaussian random vectors $\eta \in \mathbb{R}^D$ with small enough variance, the following estimate holds 
\begin{align}\label{eq:TheProbabilisticStatement2}
    \mathbb{E}_\eta\left(\sup_{x \in \mani}\left|\mathrm{R}\left(\Phi^{f, \epsilon}_{\mani}\right)(x+\eta) - \mathrm{R}\left(\Phi^{f, \epsilon}_{\mani}\right)(x)\right|^2\right)  = \mathcal{O}\left(\mathbb{E}(|\eta|^2) \frac{d}{D} \right).
\end{align}
The implicit constant in the estimate depends on the Lipschitz constant of $f$ (Theorem \ref{thm:LipschitzFunctionOnManifoldBetterEstimate2} gives a more precise discussion of the implicit constants).
\end{theorem}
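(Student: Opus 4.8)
The plan is to build the network in two conceptual stages: a \emph{chart stage} that reduces the $D$-dimensional input to intrinsic coordinates, and an \emph{approximation stage} that reproduces $f$ in those coordinates. Since $\Gamma$ is a compact $d$-dimensional submanifold, I would cover it by finitely many patches $U_1,\dots,U_N$ (with $N$ depending only on $\Gamma$) on each of which $\Gamma$ is a graph over its tangent space, so that the orthogonal projection $P_i$ onto the tangent space $T_{x_i}\Gamma$ at a base point $x_i$ furnishes a bi-Lipschitz local coordinate map $\Gamma\cap U_i\to\R^d$. The decisive structural choice is that the resulting network should depend on its input only through these $d$-dimensional projections; this single design decision is what simultaneously yields the parameter count $\mathcal{O}(\epsilon^{-d})$ and the noise robustness.

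For the approximation bound, on each patch the composition of $f$ with the inverse chart is a Lipschitz function of $d$ variables. Such a function can be approximated to accuracy $\epsilon$ by a ReLU network with $\mathcal{O}(\epsilon^{-d})$ parameters, for instance by piecewise-linear interpolation on a grid of mesh width proportional to $\epsilon/L$, where $L=\Lip(f)$. I would then stitch the local approximants together with a partition of unity $\{\rho_i\}$ subordinate to the cover; both the partition functions and the products $\rho_i\cdot(\text{local approximant})$ are realizable (the latter up to arbitrarily small error) by ReLU networks using the standard approximate-multiplication gadget, and the final summation is a single linear layer. A triangle-inequality argument, controlling the multiplication and partition-of-unity errors together with the local approximation errors, yields $\|f-\mathrm{R}(\Phi_\Gamma^{f,\epsilon})\|_{L^\infty(\Gamma;\R^q)}\le\epsilon$, while the fixed number of patches keeps the total size at $\mathcal{O}(\epsilon^{-d})$.

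For the noise robustness, the point is that the chart stage is, up to controlled curvature corrections, the tangent-space projection, so the sensitivity of $\mathrm{R}(\Phi_\Gamma^{f,\epsilon})$ to a perturbation $\eta$ is governed by $|P_{T_x\Gamma}\eta|$ rather than by $|\eta|$. Concretely I would establish a pointwise bound of the form $|\mathrm{R}(\Phi)(x+\eta)-\mathrm{R}(\Phi)(x)|\lesssim L\,|P_{T_x\Gamma}\eta|+(\text{terms of higher order in }|\eta|)$, valid uniformly in $x\in\Gamma$ once the variance is small enough that the perturbed point stays inside the tubular neighbourhood afforded by the technical conditions. Taking expectations and using that for a Gaussian $\eta\sim\mathcal{N}(0,\sigma^2 I_D)$ and any fixed $d$-dimensional subspace one has
\[
\mathbb{E}\left(|P\eta|^2\right)=\sigma^2 d=\mathbb{E}\left(|\eta|^2\right)\frac{d}{D},
\]
the supremum over $x$ can be absorbed into a uniform Lipschitz constant and the desired bound $\mathcal{O}\!\left(\mathbb{E}(|\eta|^2)\,d/D\right)$ follows, with implicit constant proportional to $L=\Lip(f)$.

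The main obstacle is making the tangent-projection picture uniform over the whole of $\Gamma$ at once. The partition-of-unity stitching creates cross terms whose derivatives must be shown not to inflate the global Lipschitz constant beyond a multiple of $L$, and because the normal bundle is not globally trivial the $d/D$ energy-concentration argument has to be run locally (chart by chart) and only then reassembled, which is precisely where the technical hypotheses on $\Gamma$ (bounded curvature / positive reach, ensuring a uniform tubular neighbourhood) are essential. A secondary subtlety is the order of $\sup_x$ and $\mathbb{E}_\eta$: one must first obtain a Lipschitz bound uniform in $x$ so that the supremum can be taken before integrating over the Gaussian, which is exactly why the smallness condition on the variance enters the statement.
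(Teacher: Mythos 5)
Your construction stage essentially coincides with the paper's: cover $\Gamma$ by finitely many charts given by tangent-space projections (possible by bounded curvature), approximate each chart-composed Lipschitz function of $d$ variables by a ReLU network of size $\mathcal{O}(\epsilon^{-d})$, and stitch with a finite-element partition of unity using the approximate-multiplication gadget and a final summing layer. The genuine gap is in the probabilistic part, at exactly the point you dismiss as a ``secondary subtlety.'' Your pointwise bound is $|\mathrm{R}(\Phi)(x+\eta)-\mathrm{R}(\Phi)(x)|\lesssim L\,|P_{T_x\Gamma}\eta|$, in which the projection \emph{varies with} $x$. The quantity to be controlled is $\mathbb{E}_\eta\bigl(\sup_{x\in\Gamma}|\cdots|^2\bigr)$, i.e.\ the expectation of a supremum of correlated $\chi^2_d$-type variables; the identity $\mathbb{E}\bigl(|P\eta|^2\bigr)=\sigma^2 d$ for a \emph{fixed} subspace only bounds $\sup_{x}\mathbb{E}_\eta(\cdots)$, which is the interchange in the wrong direction. ``Absorbing the supremum into a uniform Lipschitz constant'' does not work: for each realization of $\eta$, the supremum over $x$ selects the tangent space best aligned with $\eta$, and no deterministic uniform constant captures that. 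The paper resolves this by reducing $\sup_{x\in\Gamma}$ to a maximum over the \emph{finitely many} chart projections $P_{y_j,\Gamma}$, $j=1,\dots,M$, and then invoking the maximal inequality for $\chi^2_d$ random variables, which gives $\mathbb{E}\bigl(\max_{j}|P_{y_j,\Gamma}\eta|^2\bigr)\leq\sigma^2\bigl(d+\sqrt{2d\log M}+2\log M\bigr)$; the $\log M$ terms are then absorbed into the $\Gamma$-dependent implicit constant of the simplified statement. Without this maximal-inequality step (or an equivalent chaining argument) your claimed bound does not follow from what you have written.

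A second, smaller gap concerns the Gaussian tail. You assume the variance can be taken small enough that ``the perturbed point stays inside the tubular neighbourhood,'' but a Gaussian $\eta$ is unbounded, so $x+\eta$ exits the tube with positive probability no matter how small $\sigma$ is; your pointwise estimate is simply unavailable on that event. The paper handles this by clipping the network output to $[0,1]^q$ (so the integrand is bounded on the bad event) and paying an extra term $2De^{-r_0^2/(2\sigma^2)}$ from the Gaussian tail bound, which is then negligible for small $\sigma$. Some such truncation of the output together with a tail estimate is necessary in your argument as well; otherwise the expectation is uncontrolled outside the tubular neighbourhood.
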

Equation \eqref{eq:TheProbabilisticStatement} illustrates what we mean by the noise-dampening effect of the neural network. For a typical 1-Lipschitz regular function $f: \R^D \to \R$, we would expect that for a $D$-dimensional Gaussian random vector $\eta$ it holds for an $x \in \R^D$ that $    \mathbb{E}\left(|f(x + \eta) - f(x)|^2\right) \leq \mathbb{E}\left(|\eta|^2\right).
$
Equation \eqref{eq:TheProbabilisticStatement} on the other hand, includes the additional dampening factor $d/D$. 

Notice that at first glance, it is not evident how Theorem \ref{thm:LipschitzFunctionOnManifoldBetterEstimate2} can be applied to solve inverse problems. Indeed, for most interesting inverse problems (as the ones discussed in this paper), the inverse of the forward operator is discontinuous and therefore not Lipschitz. Besides, the domain and codomain of the forward operator can be infinite-dimensional Banach spaces.
To apply Theorem \ref{thm:LipschitzFunctionOnManifoldBetterEstimate2}, we will approximate $F^{-1}$ by a Lipschitz continuous operator on finite-dimensional spaces. 
The existence of such an approximation, which corresponds to reconstruction from finitely many measurements, was established in \cite[Corollary 1]{alberti2019inverse} and the details of that result will be briefly discussed in Subsection \ref{subs:soluinvp}.
Moreover, Theorem \ref{thm:LipschitzFunctionOnManifoldBetterEstimate2} establishes the existence of a robust-to-noise neural network. Thus, after taking sufficiently many measurements and thereby transforming an infinite-dimensional problem into a stable finite-dimensional problem, Theorem \ref{thm:LipschitzFunctionOnManifoldBetterEstimate2} can be applied to obtain robust neural network approximations. 

The second main insight of the manuscript is that a robust-to-noise neural network of which we know its existence can be found with an appropriate training strategy. We will demonstrate in Theorem \ref{thm:noiseStabilityAfterTraining} that by training on randomly perturbed data, which can easily be generated in large quantities if only the forward operator of the associated inverse problem is known, we can find a robust neural network which has the same stability properties as the one of Theorem \ref{thm:intuitiveThm}/Theorem \ref{thm:LipschitzFunctionOnManifoldBetterEstimate2}.

The third contribution of this work is that we identify a comprehensive list of non-linear infinite-dimensional inverse problems to which the framework described above is applicable. The admissible problems are the transmissivity coefficient identification problem, the coefficient identification problem of the Euler-Bernoulli equation, an inverse problem associated to the Volterra-Hammerstein integral equation, and the linear gravimetric problem. 
We will describe these problems in detail in Section \ref{sec:admissibleInverseProblems}.

Finally, it is certainly not clear if the robust neural networks which are theoretically guaranteed to exist can be found in practice using gradient-based training. Thus, we add a comprehensive numerical study that clearly demonstrates that in many problems neural networks with the same stability properties as the theoretically established are found. This study is presented in Section \ref{sec:NumExps}.

We end this subsection by briefly discussing the ideas behind the proof of Theorem \ref{thm:intuitiveThm}/Theorem \ref{thm:LipschitzFunctionOnManifoldBetterEstimate2}. This result can be divided into two parts. The first one guarantees the existence of a relatively small neural network that approximates every Lipschitz continuous function defined on a low-dimensional manifold of a Euclidean space with arbitrary precision $\epsilon>0$. 
Although similar results are already well-known in the literature, the way in which the neural network is constructed differs from other approaches and our construction paves the way for the second statement of Theorem \ref{thm:intuitiveThm}/Theorem \ref{thm:LipschitzFunctionOnManifoldBetterEstimate2}. 
We construct the neural network not on the manifold $\mani$ but on $\mani+B_{\delta}(0)$, where for a $\delta>0$, $B_{\delta}(0)$ denotes the Euclidean ball of radius $\delta$ centered at the origin. Here $\delta$ depends on the manifold and will later determine the maximum noise level up to which the neural network is robust.
By making use of a suitable partition of unity $(\phi_i)_{i=1}^{M}$, for $M \in \N$, constructed from linear finite element functions, we can express $f$ as the following sum:
\begin{align}\label{eq:repOfManifoldFunction}
     f(x) =  \sum_{i = 1}^{M} \phi_{i}(x) f_i\left(\mathrm{P}_{y_i,\mani}(x)\right), \text{ for } x \in \mani 
\end{align}
where $f_i$, $i=1,\dots, M$ with $\supp(f_i) \subset B_\delta(y_i)$ for $y_i \in \mani$ are Lipschitz continuous functions and $\mathrm{P}_{y_i,\mani}$ is the projection to the tangent space of $\mani$ at $y_i$. To approximate \eqref{eq:repOfManifoldFunction} by a neural network, we notice, that a) we can exactly represent the partition of unity made from finite elements, thanks to a result of \cite{he2018relu}, b) multiplication of functions is efficiently emulated due to a result of \cite{PetV2018OptApproxReLU}, c) low-dimensional Lipschitz functions are again efficiently representable due to \cite{he2018relu}, and d) the projection operators are linear and hence correspond to neural networks without hidden layers. 
The construction of the overall neural network from the smaller building blocks as well as an estimate of the required number of neurons and layers thereof can be derived using a formal ReLU calculus, that will be recalled in Section \ref{sec:ReLUCalc}. 
To derive the robustness statement \eqref{eq:TheProbabilisticStatement2}, we first prove that, if $h_\epsilon$ denotes the neural network approximating $f$ up to an error of $\epsilon$ on $\mani+B_{\delta}(0)$ and $\eta \in \R^D$ is a Gaussian random variable, then, if $|\eta| \leq \delta$, for $x \in \mani$ 
$$
    |h_\epsilon(x+\eta) - h_\epsilon(x)|^2 \leq 2|f(x + \eta) - f(x)|^2 +  8\epsilon^2 \lesssim \max_{i \in [M]} |\mathrm{P}_{y_i, \mani} (\eta)|^2 + \epsilon^2
$$ 
since the $(f_i)_{i=1}^{M}$ are Lipschitz continuous. One can show, for all $i = 1, \dots, M$, that $ |\mathrm{P}_{y_i, \mani} (\eta)|^2/\sigma^2$ where $\sigma^2 = \mathbb{E}(\vert \eta \vert^2)$ is a $\chi_d^2$-distributed random variable and, therefore, by the maximal inequality for $\chi_d^2$ random variables \cite{boucheron2013concentration} the anticipated estimate follows. The result is completed by demonstrating that  $|\eta| > \delta$ is an event with very low probability.

\subsection{Related work}\label{subsec:RelatedW}

\subsubsection{Transformation of infinite dimensional inverse problems to finite dimensions} \label{subs:soluinvp}

In \cite{alberti2019inverse}, a strategy was introduced to transform infinite-dimensional discontinuous inverse problems into Lipschitz regular finite-dimensional problems. This approach forms the basis of our results and will be briefly discussed below.
Assuming again an inverse problem in the form of $F: \mathcal{X} \to \mathcal{Y}$, we choose a finite-dimensional space $W_0 \subset \mathcal{X}$ and restrict the forward operator to this space. 
However, restricting the domain only, does not guarantee that the image of the operator lies in a finite-dimensional linear space. To overcome this problem, a family of finite-rank bounded operators $Q_N: \mathcal{Y} \to \mathcal{Y}$, $N \in \N$, approximating the identity $I_{\mathcal{Y}}$ is introduced in \cite{alberti2019inverse} so that the function between finite-dimensional spaces $Q_N \circ F|_{W_0}: W_0 \to \mathrm{Im}(Q_N)$ can be interpreted as a continuous approximation of $F$. 
This approach fits into what can be expected in real-life problems. In practice, $Q_N$ corresponds to a sampling procedure. The following theorem was proved in \cite{alberti2019inverse}.

\begin{theorem}[{\cite[Corollary 1]{alberti2019inverse}}].\label{thm:Alberti}
Let $\mathcal{X}$, $\mathcal{Y}$ be Banach spaces, $A \subset \mathcal{X}$ be an open subset, $W_0 \subset \mathcal{X}$ be a finite-dimensional subspace and $K \subset W_0 \cap A$ be a compact and convex subset. If $F \in C^1(A, \mathcal{Y})$ is such that $F|_{W_0 \cap A}$ is injective and $F'(x)|_{W_0}$ is injective for all $x \in W_0 \cap A$, then there is a constant $C>0$ and $D \in \N$ such that
\begin{equation}
     \Vert x_1 - x_2 \Vert_\mathcal{X} \leq C \Vert Q_D(F(x_1)) - Q_D(F(x_2)) \Vert_\mathcal{Y},
 \end{equation}
 for all $x_1, x_2 \in K$.
\end{theorem}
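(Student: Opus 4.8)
The plan is to prove the estimate by contradiction, reducing everything to a compactness argument on the finite-dimensional space $W$ together with the strong convergence $Q_N \to I_{\mathcal{Y}}$. Assume the conclusion fails. Since the claim only asserts the existence of a pair $(C,D)$, its negation lets me take $C = D = n$ and obtain, for every $n \in \N$, points $x_1^n, x_2^n \in K$ with
\[
\|x_1^n - x_2^n\|_{\mathcal{X}} > n\,\|Q_n(F(x_1^n) - F(x_2^n))\|_{\mathcal{Y}}.
\]
In particular $x_1^n \neq x_2^n$, so I may normalize: set $u_n := (x_1^n - x_2^n)/\|x_1^n - x_2^n\|_{\mathcal{X}} \in W$ and $w_n := (F(x_1^n) - F(x_2^n))/\|x_1^n - x_2^n\|_{\mathcal{X}}$, which turns the inequality into $\|Q_n w_n\|_{\mathcal{Y}} < 1/n \to 0$. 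The goal is then to show that, along a subsequence, $w_n$ converges to some $\bar w \neq 0$ while $Q_n w_n \to \bar w$, contradicting $\|Q_n w_n\| \to 0$.

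Since $K$ is compact and the $u_n$ lie in the finite-dimensional $W$ with $\|u_n\|=1$, I can extract a subsequence (not relabeled) along which $x_1^n \to \bar x_1$, $x_2^n \to \bar x_2$ in $K$ and $u_n \to \bar u \in W$ with $\|\bar u\| = 1$. Two regimes arise. If $\bar x_1 \neq \bar x_2$, then continuity of $F$ gives $w_n \to (F(\bar x_1) - F(\bar x_2))/\|\bar x_1 - \bar x_2\| =: \bar w$, and $\bar w \neq 0$ because $\bar x_1, \bar x_2 \in K \subset W \cap A$ and $F|_{W\cap A}$ is injective. If instead $\bar x_1 = \bar x_2 =: \bar x$, both numerator and denominator of $w_n$ collapse and I must pass to the derivative. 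Using convexity of $K$, the segment $\xi_n(t) := x_2^n + t(x_1^n - x_2^n)$ stays in $K \subset A$, so the fundamental theorem of calculus yields $w_n = \int_0^1 F'(\xi_n(t))\,u_n\,\mathrm{d}t$. As $F \in C^1(A,\mathcal{Y})$, the derivative $F'$ is continuous, hence uniformly continuous and bounded on the compact set $K$; since $\xi_n(t) \to \bar x$ uniformly in $t$ and $u_n \to \bar u$, I get $w_n \to F'(\bar x)\bar u =: \bar w$. Crucially, $\bar w \neq 0$ because $F'(\bar x)|_W$ is injective and $\bar u \neq 0$.

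It remains to transfer this convergence through the operators $Q_n$. Here I use that ``approximating the identity'' means strong convergence $Q_N y \to y$ for every $y \in \mathcal{Y}$; by the Banach--Steinhaus theorem this forces $M := \sup_N \|Q_N\| < \infty$. With $w_n \to \bar w$ and $Q_n \bar w \to \bar w$ available, the bound
\[
\|Q_n w_n - \bar w\|_{\mathcal{Y}} \le M\,\|w_n - \bar w\|_{\mathcal{Y}} + \|Q_n \bar w - \bar w\|_{\mathcal{Y}} \to 0
\]
shows $Q_n w_n \to \bar w$, so $\|Q_n w_n\|_{\mathcal{Y}} \to \|\bar w\|_{\mathcal{Y}} > 0$, contradicting $\|Q_n w_n\|_{\mathcal{Y}} < 1/n$. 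I expect the main obstacle to be the collapsing case $\bar x_1 = \bar x_2$: there global injectivity of $F$ yields no lower bound, and one is forced to linearize along the segment (which is exactly where convexity of $K$ is essential) and to exploit that an injective linear map on the finite-dimensional space $W$ is automatically bounded below, so that $F'(\bar x)\bar u \neq 0$. A secondary technical point is the simultaneous limit in the two arguments of $Q_n w_n$, which is precisely why uniform boundedness of the family $\{Q_N\}$, and not merely pointwise convergence, must be invoked.
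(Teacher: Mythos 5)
Your proof is correct. Note, however, that the paper itself contains no proof of this statement: Theorem \ref{thm:Alberti} is imported verbatim from \cite[Corollary 1]{alberti2019inverse}, so the only meaningful comparison is with that reference. There, the argument is organized in two separate steps: first one proves the Lipschitz stability estimate $\Vert x_1 - x_2 \Vert_{\mathcal{X}} \leq C \Vert F(x_1) - F(x_2) \Vert_{\mathcal{Y}}$ on $K$ by precisely your compactness dichotomy (distinct limit points handled by injectivity of $F|_{W \cap A}$; collapsing limit points handled by linearizing along the segment, which is where convexity of $K$ enters, and by injectivity of $F'(\bar x)|_W$); then, in a second step, one passes from $F$ to $Q_D \circ F$ by observing that the set of normalized differences $(F(x_1)-F(x_2))/\Vert x_1 - x_2\Vert_{\mathcal{X}}$ has compact closure bounded away from $0$, on which $Q_N \to I_{\mathcal{Y}}$ converges uniformly thanks to the Banach--Steinhaus equiboundedness you also invoke. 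Your diagonal choice $C = D = n$ fuses these two steps into a single contradiction argument, which is slightly more economical but blurs the separation between the stability of $F$ itself and its robustness under the perturbation $I_{\mathcal{Y}} - Q_D$; the mathematical core (compactness of $K$ and of the unit sphere of the finite-dimensional $W$, the two-case dichotomy, the integral mean value theorem along a segment contained in $K \subset A$, and uniform boundedness of $\{Q_N\}$) is identical. All individual steps of your write-up check out, including the often-overlooked points that the negation must be applied along a diagonal, that $\bar u$ stays in $W$ and has unit norm, and that strong convergence alone would not suffice to conclude $Q_n w_n \to \bar w$ without the uniform bound $\sup_N \Vert Q_N \Vert < \infty$.
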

Theorem \ref{thm:Alberti} states that the inverse of the function $Q_D \circ F|_K: K \to \mathrm{Im}(Q_D)$ is Lipschitz continuous. This observation will be significant for the solution of inverse problems by neural-network-based methods that will be proposed in this work. The constant $C>0$ can be estimated in terms of a lower bound of the Fr\'echet derivative and the moduli of continuity of $(F|_K)^{-1}$ and $F'$. In \cite{bacchelli2006lipschitz} and \cite{rondi2006remark} some ad-hoc techniques have been tailored to estimate such a constant $C$ for specific problems.

We would like to add that the method of \cite{alberti2019inverse}, was already used in the context of stabilizing deep-neural-network-based solutions of inverse problems in \cite{alberti2022inverseGenerative}.

\subsubsection{Approximation of functions on manifolds by neural networks}
It has been noticed in several works that for many reconstruction problems the data often lies in a low-dimensional manifold of a higher-dimensional space \cite{schmidt2019deep}. In that articles, all manifolds are subsets of a Euclidean space. Such a manifold is typically called the \textit{intrinsic space} and the higher-dimensional space is the \textit{ambient space}. Their dimensions are called \textit{intrinsic dimension} and \textit{ambient dimension}, respectively. Given data belonging to a manifold $\mani$, the problem of reconstructing its coordinate maps $\phi : U \subset \mani \to \mathbb{R}^d$, for $d \in \N$ by neural networks has been extensively studied (see \cite{bickel2007local, hein2005graphs, singer2006graph}). 
Besides, methods for the reconstruction of functions between two given manifolds by neural networks have been discussed \cite{mhaskar2010eignets, chui2018deep}, where smooth activation functions were considered. 
For the ReLU activation function, approximation of $C^k$-functions, $k \in \N$, on a manifold is analyzed in \cite{shaham2018provable, cloninger2021deep}.
Results for $\alpha$-H\"older continuous functions, $\alpha >0$, can be found in \cite{schmidt2019deep, kohler2019estimation, chen2019nonparametric, nakada2020adaptive}. Approximation of high-dimensional piecewise $\alpha$-H\"older continuous functions which include smooth dimension reducing feature maps was studied in \cite[Chapter 5]{PetV2018OptApproxReLU}. These works do not study the stability of the trained neural networks to noise in its input.

\subsubsection{Non-neural-networks-based approaches for ill-posed inverse problems}

Ill-posed inverse problems are typically approached by so-called \textit{regularization techniques}. There are at least four main categories in which regularization techniques can be classified. These categories were discussed in \cite{arridge2019solving} and will be briefly reviewed below.
The simplest of these approaches are the \textit{analytic inversion methods} which consist of determining a closed expression for $F^{-1}$ and attempt to stabilize it (see \cite{natterer2001mathematics, natterer2001mathematical}). Those methods provide well-posed solutions of linear inverse problems where the forward operator is compact and there is a spectral decomposition allowing a closed form of $F^{-1}$. The second category is the family of \textit{iterative methods} based on gradient-descent algorithms to minimize a functional $x \mapsto \Vert F(x)-y \Vert$ in a stable way (see \cite{engl1996regularization, kirsch2011introduction}).
The third category is that of the \textit{discretization methods} such as the \textit{Galerkin methods} where the solution 
is discretized to a finite-dimensional space. It is based on the idea that appropriate discretization stabilizes the inversion, e.g.,  \cite{natterer1977regularisierung, plato1990regularization}. Finally, the last category is composed of \textit{variational methods} that reconstruct the solution by minimizing a functional of the form
\begin{equation*}
    J_{\alpha}(x) = \Vert F(x)-y \Vert + S_{\alpha}(x)
\end{equation*}
where $S_{\alpha}: \mathcal{X} \to \mathbb{R}$ is chosen to enforce a priori known properties of the solution and $x \in \mathcal{X}$ (see \cite{kaltenbacher2008iterative}). 

\subsubsection{Deep-learning-based approaches to inverse problems}
Various deep-learning-based approaches have recently been applied to solve inverse problems. In \cite{ongie2020deep}, an extensive survey on data-driven solutions for inverse problems in imaging reviews some of the most prominent approaches. Although the main focus of the survey is on linear operators, the discussed methods can be implemented for non-linear operators as well. Data-driven techniques tackling the solution of an inverse problem are divided into four categories according to the knowledge about the forward operator $F$: in case it is known from the beginning \cite{gregor2010learning, sun2016deep}, during training \cite{dabov2007image, boyd2011distributed, venkatakrishnan2013plug}, partially known \cite{armanious2019unsupervised, zhu2017unpaired}, or never known \cite{zhu2018image}. 
As illustration of the applicability of these techniques, \cite{ongie2020deep} implements such neural-network-based techniques for the solution of problems in imaging, stressing how this novel approach has proved to be successful in several applications ranging from deblurring, super-resolution, and image inpainting, to tomographic imaging,  magnetic resonance imaging, X-ray computed tomography, and radar imaging. In addition, \cite{arridge2019solving} presents an extensive survey of data-driven methods used to regularize ill-posed inverse problems.

It has been shown that neural-network-based methods for the solution of inverse problems can be unstable (see \cite{antun2019instabilities, huang2018some}). In \cite{gottschling2020troublesome}, a comprehensive analysis explaining why these methods are intrinsically unstable is presented and the difficulties in finding remedies for these instabilities are also discussed. According to the authors, instabilities are not rare events and can easily destabilize trained neural networks. This analysis is performed for linear inverse problems, particularly for inverse problems in imaging. Our paper takes a different approach since we show that in an appropriate noise regime, robust neural networks exist and can be found by appropriate training. This phenomenon has also been observed in practice previously in \cite{genzel2022solving}, which is one of the main motivations for this work.

\subsection{Outline}
 In Section \ref{sec:admissibleInverseProblems}, we will introduce a list of inverse problems to which the general theory of \cite{alberti2019inverse} is applicable and which gives rise to inverse operators defined on smooth relatively compact manifolds. In Section \ref{sec:LipschitzFunctionOnManifoldBetterEstimate}, we show how the resulting functions representing the inverse operators can be approximated by neural networks in a way that is robust to noise. Section \ref{sec:statisticalLearning} discusses the design of an appropriate training set to learn robust neural networks. In Section \ref{sec:NumExps}, we collect some numerical examples to show under which conditions the noise-robust approximations are found by practical algorithms.

\section{Admissible inverse problems}\label{sec:admissibleInverseProblems}

Below, we will present a list of inverse problems that can be discretized in the way necessary for our theory to work. All these inverse problems are modeled by an operator $F$ between two Banach spaces $\mathcal{X}$ and $\mathcal{Y}$. We solve the inverse problem if the solution belongs to a manifold $\lGamma$ which is a subset of a compact and convex subset $K$ of a finite-dimensional subspace $W_0 \subset \mathcal{X}$. 
The first step is to choose a sequence of finite-rank operators $\widehat{Q}_{N} : \mathcal{Y} \to Y_N$ where $Y_N \subset \mathcal{Y}$ is an $N$-dimensional subspace for all $N \in \mathbb{N}$ such that $(\widehat{Q}_{N})_{N \in \N}$ is a sequence of operators converging strongly to the identity operator on $\mathcal{Y}$, i.e., for all $x \in \mathcal{X}$ it holds that $\Vert \widehat{Q}_N x - x \Vert_{\mathcal{Y}} \to 0$ as $N \to \infty$.
We identify the finite-dimensional spaces $W_0$ and $Y_N$ with $\mathbb{R}^d$ and $\mathbb{R}^N$ via affine isomorphims $P : \mathbb{R}^d \to W_0$ and $T_N \colon Y_N \to \mathbb{R}^N$. Then, the inverse problem of finding $x \in \lGamma$ given $y \in \ran \widehat{Q}_N \circ F$ modeled by the equation
$$
    \widehat{Q}_N \circ F(x) = y
$$
is equivalent to the inverse problem of finding $x' \in P^{-1}(\lGamma)$ given $y' \in \mathbb{R}^N$ such that
$$
    T_N \circ \widehat{Q}_N \circ F \circ P ( x') = y'
$$
where $x' = P^{-1}(x)$ and $y'= T_N(y)$. 

For each inverse problem that will be introduced below, we show that there is a $D \in \mathbb{N}$ such that the following discretized version of $F$,
\begin{align*}
    \widetilde{F} &\colon P^{-1}(\lGamma) \subset \mathbb{R}^d \to \mathbb{R}^D,\\
    \widetilde{F} &\coloneqq T_D \circ \widehat{Q}_{D} \circ F |_{K} \circ P, 
\end{align*}
where $F|_K$ is the restriction of $F$ to $K$, satisfies
\begin{enumerate}[(i)]
    \item $\ran \widetilde{F}$ is an $(M,\delta)$-covered submanifold $\mathcal{M}$. The notion of $(M, \delta)-$covered submanifold will be introduced in Definition \ref{def:MDelta},
    \item $\widetilde{F}$ is invertible on $\mathcal{M}$,
    \item $\widetilde{F}^{-1} \colon \mathcal{M} \to P^{-1}(\lGamma)$ is Lipschitz continuous.
\end{enumerate}


In the sequel, for the sake of clarity, we will always use $\lGamma$ to denote a manifold contained in the domain of a forward operator and $\mathcal{M}$ to denote a submanifold of the image. Since Theorems \ref{thm:LipschitzFunctionOnManifoldBetterEstimate2} will be applied to approximate the inverse of a forward operator, the function there is assumed to be defined on a manifold denoted by $\mathcal{M}$.

To show the conditions (i), (ii), and (iii) for the inverse problems introduced below, we derive closed expressions for the forward operators on certain domains $H \subset \mathcal{X}$. 
Then, we demonstrate that the respective forward operator $F$ is injective on $H$ and Fr\'echet differentiable at each $a \in H$. Next, we use Theorem \ref{thm:Alberti} to show that for every compact and convex set $K \subset W_0 \cap H$ where $W_0$ is a  $d$-dimensional subspace of $\mathcal{X}$, there is a $D \in \mathbb{N}$ such that $(\widehat{Q}_D \circ F|_K)^{-1}$ is Lipschitz continuous.

Thereafter, we identify the finite-dimensional spaces $W_0$ and $Y_D$ with the finite-dimensional spaces $\mathbb{R}^d$ and $\mathbb{R}^D$ through isomorphisms $P: \mathbb{R}^d \to W_0$ and $T_D: Y_D \to \mathbb{R}^D$. For the inverse problems presented in this paper, $P$ is usually an affine isomorphism and $T_D \circ \widehat{Q}_D$ is a point sampling operator. Hence, due to the composition of invertible functions we see that $\widetilde{F} $ defined as above is invertible and bi-Lipschitz, which implies that conditions (ii) and (iii) hold. We will see in all cases below, that the discretized operator $\widetilde{F}$ is $C^\infty$ and bi-Lipschitz. Hence it follows that $\widetilde{F}$ is a diffeomorphism which implies that $\mathcal{M}$ is an $(M,\delta)$-covered submanifold.

We discuss the transmissivity coefficient identification in Subsection \ref{sec:transmissivityIdentification}, as well as the coefficient identification problem of the Euler-Bernoulli equation in Section \ref{sec:EulerBernoulli}. Thereafter, we discuss inverse problems associated to the  Volterra-Hammerstein integral equation and the gravimetric problem in Sections \ref{sec:Volterra} and \ref{sec:gravimetric}, respectively.

\subsection{Identification of the transmissivity coefficient} \label{sec:transmissivityIdentification}

In this section, we introduce a problem arising from the study of aquifers (see \cite{niwas1985aquifer}) in groundwater hydraulics. An aquifer is a natural underground formation capable of storing and transmitting underground water. Mathematically speaking, an aquifer is a three dimensional set $V \subset \mathbb{R}^3$ with an associated potential energy per unit weight: $u \in C^2(V)$.
We can consider an aquifer as a one-dimensional object if its width and depth are negligible compared to its length or if its energy varies only with respect to one of its dimensions. In that case, the aquifer can be thought of as being the interval $I\coloneqq [0,1]$ where $0$ and $1$ are its endpoints and $x$ is referred as to the position or distance with respect to the first endpoint. Under certain circumstances, the energy transported by the water, $u \in C^2([0,1])$, is modeled by the equation 
\begin{align*}
\dfrac{d}{dx} \left( a(x) u'(x) \right)= f(x) \text{ for all } x\in [0,1],
\end{align*}
where the coefficient $a \in C^1([0,1])$ is referred to as the \emph{transmissivity coefficient} which measures the ability of the aquifer to allow the passing of groundwater and $f \in C([0,1])$ measures the change of mass over time at a given point \cite{yeh1986review}.
If $a \in C([0,1])$, then we assume the product of $a$ and $u'$ to be continuously differentiable. Solving the direct problem is to find $u$ given $f$ and $a$ under certain boundary or initial conditions. The inverse problem is then to find the coefficient $a$ when $u$ and $f$ are known. Note that if $u' > 0 $ and the value $c_0 = a(0)u'(0)$ with $c_0 \in \mathbb{R}$ is also known, the coefficient $a(x)$ at the point $x\in [0,1]$ can be directly reconstructed by the formula
\begin{align*}
    a(x) = \dfrac{\int_0^x f(t)dt + c_0}{u'(x)}.
\end{align*}
Nevertheless, this formula may be useless if noise is in the data 
or only point samples of the functions $f$, $u$ are known. 
If in addition to the condition $c_0 = a(0)u'(0) \in  \mathbb{R}$ mentioned above, a second condition $c_1 = u(0) \in \mathbb{R}$ is given, we express $u$ as a function of $a$ through the forward operator $F: H_{\lambda} \subset C([0,1]) \to C([0,1])$
\begin{align} \label{eq:transmissionForwardProblem}
u = F(a) \coloneqq \left( x \mapsto \int_0^x \dfrac{\int_0^z f(s)ds +c_0}{a(z)}dz  + c_1\right),
\end{align}
where $H_{\lambda} \coloneqq \{ a \in C([0,1]): a(x) > \lambda >0 \text{ for all } x \in [0,1]\}$ for fixed $\lambda >0$. 
Notice that $H_{\lambda}$ is an open set and if $f > 0$ and $c_0 = c_1=0$, then $F$ is an injective operator on $H_{\lambda}$. Moreover, we prove that this is a Fr\'echet-differentiable operator where its Fr\'echet derivative at every $a \in H_\lambda$ is computed below, via the Gateaux derivative at $a$ evaluated at $x\in [0,1]$ along the direction $\delta a \in C([0,1])$ denoted by $D_{\delta a} F(a)$.

\begin{proposition}\label{prop:direct}
Let $\lambda>0, H_{\lambda} \coloneqq \{ a \in C([0,1]): a(x) > \lambda >0 \text{ for all } x \in [0,1]\}$ and $F: H_{\lambda} \to C([0,1]) $ be the operator defined by
\begin{align}\label{eq:tras2}
F(a) \coloneqq \left( x \mapsto \int_0^x \dfrac{\int_0^z f(s)ds }{a(z)}dz  \right),
\end{align}
for all $a \in H_{\lambda}$ and $x \in [0,1]$. Then, $F$ is Gateaux-differentiable.
\end{proposition}

\begin{proof}
Due to the definition of the Gateaux derivative at $a \in H_{\lambda}$ in the direction $\delta a \in C([0,1]) $, we have for $x \in [0,1]$ that

\begin{align*}
D_{\delta a} F(a)(x) & =   \lim _{t \to 0} \dfrac{F(a+t \delta a)(x) - F(a)(x)}{t} \\
& =  \lim _{t \to 0} t^{-1} \cdot  \left( \displaystyle \int_0^x \dfrac{\int_0^z f(s)ds}{a(z) + t \delta a(z)}dz   -  \dfrac{\int_0^z f(s)ds}{a(z) }dz \right)\\
& =  \lim_{t \to 0} t^{-1} \cdot \left(\displaystyle \int_0^x \int_0^z f(s) ds \left ( \dfrac{1}{a(z)+ t \delta a(z)} - \dfrac{1}{a(z)} \right )dz\right)\\
& =  \lim_{t \to 0}   t^{-1} \cdot \left(\displaystyle t \int_0^x \int_0^z f(s) ds \left ( \dfrac{-\delta a(z)}{a(z)(a(z)+ t \delta a(z))} \right )dz\right)\\
& =  \lim_{t \to 0} \int_0^x \dfrac{ -\delta a(z) \int_0^z f(s)ds}{a(z)(a(z)+ t \delta a(z))}dz\\
& =   -\int_0^x \dfrac{ \delta a(z) \int_0^z f(s)}{a(z)^2}dz,
\end{align*}
where the last equality is an application of the dominated convergence theorem. 

\end{proof}

\begin{proposition}
Under the assumptions of Proposition \ref{prop:direct}, the operator $F$ defined by \eqref{eq:tras2} is Fr\'echet-differentiable.
\end{proposition}
\begin{proof}
Now, we prove that the Gateaux derivative at $a$ is indeed the Fr\'echet derivative of the operator at $a \in H_{\lambda}$. By the properties of the absolute value

\begin{align*}
& \left \vert \displaystyle \int_0^x \dfrac{\int_0^z f(s)ds}{a(z) + \delta a(z)}dz  - \left( \int_0^x \dfrac{\int_0^z f(s)ds}{a(z) }dz \right ) - \left (-\int_0^x \dfrac{ \delta a(z) \int_0^z f(s)ds}{a(z)^2}dz  \right) \right \vert \\
& = \left \vert \displaystyle \int_0^x \int_0^z f(s) ds \left ( \dfrac{-\delta a(z)}{a(z)(a(z)+ \delta a(z))} \right) dz + \int_0^x \dfrac{ \delta a(z) \int_0^z f(s)ds}{a(z)^2} dz \right \vert\\
&= \left \vert \displaystyle \int_0^x \left ( \int_0^z f(s) ds \right) \left ( \dfrac{(\delta a(z))^2}{a(z)^2(a(z)+ \delta a(z))}\right)dz \right \vert\\
& \leq   \Vert \delta a \Vert_{\infty}^2  \sup_{x \in [0,1]}\left \vert \displaystyle \int_0^x \left (  \int_0^z f(s) ds \right) \left ( \dfrac{dz}{a(z)^2(a(z)+ \delta a(z))}\right) \right \vert,
\end{align*}
and therefore
\begin{align*}
&\lim_{\delta a \to 0} \dfrac{\Vert F(a+\delta a) - F(a) - D_{\delta a} F(a) \Vert_{\infty}}{\Vert \delta a \Vert_{\infty}}\\
& \leq \lim_{\delta a \to 0}  \Vert \delta a \Vert_{\infty}^2  \sup_{x \in [0,1]}\left \vert \displaystyle \int_0^x \left (  \int_0^z f(s) ds \right) \left ( \dfrac{dz}{a(z)^2(a(z)+ \delta a(z))}\right) \right \vert / \Vert \delta a \Vert_{\infty} = 0,
\end{align*}
since for $\|\delta a \|_\infty$ small enough, we have that 
$$
\sup_{x \in [0,1]}\left \vert \displaystyle \int_0^x \left (  \int_0^z f(s) ds \right) \left ( \dfrac{dz}{a(z)^2(a(z)+ \delta a(z))}\right) \right \vert 
$$
is bounded. Therefore, the Fr\'echet derivative for the operator $F$ at $a$ denoted by $F_a': C([0,1]) \to C([0,1])$ is given by
$$
F_a'(\delta a) = \left ( x \mapsto -\int_0^x \dfrac{ \delta a(z) \int_0^z f(s)ds}{a(z)^2}dz \right ) \quad 
\text{for} \quad \delta a \in C([0,1]).
$$
\end{proof}

\begin{proposition}
Under the assumptions of Proposition \ref{prop:direct}, let us consider the operator $F$ defined by \eqref{eq:tras2}. Let $F_a'$ denote the Fr\'echet derivative of $F$ at $a \in H_{\lambda}$.
Then, $F_a'$is injective for all $a \in H_{\lambda}$.
\end{proposition}

\begin{proof}

 As $F'_a$ is a linear operator, it is enough to prove that if $F_a'(\delta a) = 0$ for $\delta a \in C([0,1])$,  then $\delta a =0$. If for all $x \in [0,1]$
 
\begin{align*}
F_a'(\delta a)(x) & = -\int_0^{x} \dfrac{ \delta a(z) \int_0^z f(s)ds}{a(z)^2}dz =0,\\
\end{align*}
then, due to the continuity of the integrand and by the fundamental theorem of calculus, we have that
\begin{align*}
\dfrac{ \delta a(z) \int_0^z f(s)ds}{a(z)^2}=0
\end{align*}
which shows that $\delta a(z) = 0$ for all $z \in [0,1]$.
 
 \end{proof}


We have observed above that the operator $F$ is injective and Fr\'echet differentiable and $F_a'$ is injective for all $a \in H_{\lambda}$. 

Next, we introduce the operator $P_{W, \lambda}: [a,b]^d \subset \mathbb{R}^d \to W+ \lambda$ given by $P_{W, \lambda}(\alpha) = \lambda + \sum_{k=1}^d \alpha_k \phi_k$,
where $(\phi_k)_{k=1}^d$ is a basis for a finite dimensional space $W$ where $\phi_i>0$ for all $i \in [d]$ and $\alpha \in [a,b]^d$ for $0<a<b$ is an isomorphism onto its range. Let $(Q_N)_{N\in \N}$ be a sequence of operators converging strongly to the identity operator as in the beginning of Section \ref{sec:admissibleInverseProblems}.

Let  $W_0 \coloneqq W \oplus \lambda$, then, according to Theorem \ref{thm:Alberti} there is a $D \in \mathbb{N}$ and $C>0$ such that 
$$
    \Vert a_1 - a_2 \Vert_{\infty} \leq C \Vert Q_D(F(a_1)) - Q_D(F(a_2)) \Vert_{\infty},
$$
for all $a_1, a_2$ in an arbitrary compact and convex $K \subset W_0 \cap H_{\lambda}$. Notice that this result implies that, for an isomorphism $T_D$ such that $T_D \circ Q_D$ is a point sampling operator, we have that $T_D \circ Q_D \circ F$ has a Lipschitz continuous inverse. 

We conclude that the operator $\widetilde{F} = T_D \circ Q_D \circ F \circ P_{W, \lambda}$ is an invertible and bi-Lipschitz function. Additionally, it is clear by construction that for every $x \in [0,1]$
\begin{align*}
    \R^d \ni \alpha \mapsto \int_0^x \dfrac{\int_0^z f(s)ds}{( \lambda + \sum_{k=1}^d \alpha_k \phi_k(z))}dz
\end{align*}
is smooth. This implies that $\widetilde{F}$ is smooth as well. Notice that $\widetilde{F}$ returns a $D$-dimensional vector which corresponds to the values of the functions at the sampling points. The details behind this approach can be found on \cite[Page 7]{alberti2019inverse}. We conclude with the inverse function theorem that $\mathcal{M} = \widetilde{F}((a,b)^d)$ is a relatively compact manifold.

\subsection{Identification of the coefficient in the Euler-Bernoulli equation}\label{sec:EulerBernoulli}

A beam is a long rigid element of a complex structure which is subject to a usually perpendicular external force causing the beam to bend. For simplicity, we think of a beam as the set of points $B=[0,1] \times \{ 0 \} $.
The deflection of the point $(x,0)$ is the measure of how much it is moved to a new position $(x', y') \in \mathbb{R}^2$. The model where deflections occur only in the $y$-coordinate under the action of a perpendicular force acting on $(x,0)$ is called the \textit{Euler-Bernoulli} model. Such deflections also depend on a material property called \textit{flexural rigidity} denoted by $a$. 
By simplicity, we identify the point $(x, 0)$ with $x\in [0,1]$, the $y$-coordinate of the deflection acting on $x$ as $u(x) = u(x,0)$, and the $y$-coordinate of the force acting on $x$ as $f(x)$. We call $u: [0,1] \to \mathbb{R}$ the \textit{deflection function} and $f:[0,1] \to \mathbb{R}$ the \textit{force}. If $u \in C^4([0,1])$, $a\in C([0,1])$ and $f\in C([0,1])$, then, under some additional conditions that are described in \cite{marinov2008inverse}, the deflection $u(x)$ of the beam at the point $x$ is modeled by the differential equation
\begin{align*}
\dfrac{d^2}{dx^2} \left ( a(x) u''(x) \right ) & =  f(x) \quad  \text{ for } x \in [0,1],
\end{align*}
which is called the \emph{Euler--Bernoulli} equation. There is a direct problem associated to this phenomenon consisting of identifying the deflection $u(x)$ at every point $x\in [0,1]$ when  $f$ and $a$ are given and the following boundary conditions hold
\begin{align*}
(a u'')'(0)& = c_3,\\
a(0) u''(0) & =  c_2,\\
u'(0) &= c_1,\\
u (0) & = c_0,
\end{align*}
where $c_0$, $c_1,$ $c_2$ $c_3 \in \mathbb{R}$. There are two inverse problems associated to this forward problem. First, the problem of reconstructing the coefficient $a$ leading to the so-called \textit{coefficient identification problem}. 
This problem is particularly relevant when studying structures eroded by the action of natural agents such as storms and earthquakes and if it is impossible to dismantle the whole structure. 
Second, we have the (inverse) problem of reconstruction of the source $f$. 
In this work, we focus only on the first of the two inverse problems.

We remark that for the coefficient identification problem as described above the solution can be exactly reconstructed by integrating the differential equation. Concretely, if $u''(x) \neq 0$ for all $x \in [0,1]$ an explicit formula for the reconstruction of the coefficient $a$ at the point $x \in [0,1]$ is given by
\begin{align}\label{eq:Euler}
a(x) = \dfrac{\int_0^x \int_0^t f(s)ds dt+c_3 x + c_2}{u''(x)}. 
\end{align}
However, when dealing with real-life problems, the exact value of $u$ is unknown and instead, all that is provided is an approximation with noise. 
For our method to be applicable, we assume that $a$ belongs to a finite-dimentional space. Straightforward calculations lead to the forward operator $F: H_\lambda \subset C([0,1])  \to C([0,1])$

\begin{align}\label{eq:EulerBernoulli}
F(a) \coloneqq \left ( x \mapsto \int_0^x \int_0^y \dfrac{\left( \int_0^s \int_0^w f(z)dzdw + c_3s + c_2 \right)}{a(s)}dsdy + c_1x+c_0 \right), 
\end{align}
where $H_\lambda \coloneqq \{a \in C([0,1]): a > \lambda >0\}$ for a fixed $\lambda$ is an open set. The uniqueness of $a$ for $u \in \mathrm{Im}(F)$ such that $u''>0$ and for initial conditions $c_0=c_1=c_2=c_3=0$ is guaranteed by the explicit formula \eqref{eq:Euler}, i.e., $F$ is injective. 
Similar calculations to the example of Subsection \ref{sec:transmissivityIdentification} can be carried out to obtain the derivative of $F$ as
\begin{align*}
    F'(a)(\delta a) = -\int_0^x \int_0^y \dfrac{\delta a(s) \left( \int_0^s \int_0^w f(z)dz dw  \right)}{a(s)^2}ds dy, \text{ for } \delta a \in C([0,1]).
\end{align*}
Moreover, by similar arguments as in the previous subsection, it can be shown that $F'(a)$ is injective for a fixed $a$ and continuous if $f>0$. 

We define again $P_{W, \lambda}: \mathbb{R}^d \to W$ given by $P_{W, \lambda}(\alpha) = \lambda + \sum_{k=1}^d \alpha_k \phi_k$,
where $(\phi_k)_{k=1}^d$ is a basis for a subspace $W$, $\phi_i>0$ for all $i \in [d]$, and $\alpha \in [a,b]^d$ for $0<a<b$. Let $(Q_N)_{N\in \N}$ be a sequence of operators converging strongly to the identity operator as in the beginning of Section \ref{sec:admissibleInverseProblems}. Let $W_0 \coloneqq W \oplus \lambda$. 
Then, Theorem \ref{thm:Alberti} can be applied again. This shows that there exists a $D \in \mathbb{N}$ and $C>0$ such that 
$$
    \Vert a_1 - a_2 \Vert_{\infty} \leq C \Vert Q_D(F(a_1)) - Q_D(F(a_2)) \Vert_{\infty},
$$
for all  $a_1, a_2$ in an arbitrary compact and convex $K \subset W_0 \cap H_{\lambda}$. We conclude that $Q_D \circ F|_K$ is bi-Lipschitz and injective. In addition, for an isomorphism $T_D$ such that $T_D \circ Q_D$ is a point sampling operator, we have that $T_D \circ Q_D \circ F$ has a Lipschitz continuous inverse. 

Hence, the operator $\widetilde{F} = T_D \circ Q_D \circ F \circ P_{W, \lambda}$ is an invertible, bi-Lipschitz, and smooth function. As previously discussed, $\mathcal{M} = \widetilde{F}((a,b)^d)$ is a relatively compact manifold.

\subsection{Solution of a non-linear Volterra-Hammerstein integral equation}\label{sec:Volterra}

Volterra-Hammerstein equations have been widely studied in the literature \cite{miller1975volterra, razzaghi2001solution, sepehrian2005solution}. Let $F: L^2([0,1]) \to L^2([0,1])$ be the following (forward) operator
\begin{align}
\label{eq:VolterraHammerstein}
    u \mapsto  F(u) \coloneqq \left ( t \mapsto \int_0^t (u(s))^2 ds \right ),
\end{align}
for $u \in L^2([0,1])$ and $t \in [0,1]$. 
The direct problem is to determine the function $F(u) \in L^\infty([0,1]) \subset L^2([0,1])$ for a given $u \in L^2([0,1])$. 
Let us notice that $F(u)$ always exists for $u \in L^2([0,1])$. Let us consider the set $H_{\lambda} = \{ u \in L^2([0,1]): u > \lambda >0\}$ for fixed $\lambda$. 
Notice that $F(H_{\lambda}) \subset C^1([0,1])\cap \{ v \in C^1([0,1]) : v'>0 \} \subset L^2([0,1])$. 
A solution to the inverse problem is found via the fundamental theorem of calculus. For every $v \in C^1([0,1])$, such that $v'>0$ the equation $F(u) = v$ has two continuous solutions $u_+ = \sqrt{v'}$ and $u_- = - \sqrt{ v'}$. As $u_- \notin H_{\lambda}$, we conclude that the operator $F|_{H_{\lambda}}$ is injective and this inverse problem has a unique solution. It is a well-known result that integral linear operators on infinite-dimensional spaces are compact operators and therefore, their inverses are not bounded. Hence, the inverse operator of \eqref{eq:VolterraHammerstein} is not continuous. 
To overcome the instability of the inverse problem, we use Corollary 1 in \cite{alberti2019inverse}. By similar calculations to the previous examples, the first Fr\'echet derivative at $u$ of the forward operator denoted by $F_u': L^2([0,1]) \to L^2([0,1])$ is the linear operator 
$$
   \delta u \mapsto F_u'(\delta u) = \left( t \mapsto  2 \int_0^t u(s) \delta u(s) ds \right ),
   $$
for all $\delta u \in L^2([0,1])$. 
Notice that under the assumption $u(s)>0$, we have that $F_u'(\delta u)=0$ if and only if $\delta u =0$. Indeed, we have that if $F_u'(\delta u) = F_u'(\delta u')$ for $\delta u, \delta u' \in L^2([0,1])$, then
$$
\int_0^x u(s)(\delta u(s) - \delta u'(s)) ds = 0,
$$
by the Lebesgue differentiation theorem we have that for almost every $t \in [0,1]$
\begin{align*}
    u(t)(\delta u(t) - \delta u'(t)) & = \lim_{h \to 0} h^{-1}\int_{t-h}^{t+h} u(s)(\delta u(s) - \delta u'(s))ds \\
    & = \lim_{h \to 0} h^{-1}\cdot \left (  \int_{0}^{t+h} u(s)(\delta u(s) - \delta u'(s))ds - \int_{0}^{t-h} u(s)(\delta u(s) - \delta u'(s))ds\right)\\
    & = 0,
\end{align*}
which implies $\delta u = \delta u'$ and therefore $F_u'$ is injective. 

Since the operator $F$ is Fr\'echet differentiable, injective, and $F_u'$ is injective for all $u \in C([0,1]) $, Theorem \ref{thm:Alberti} can be applied as in the previous examples. 
The discussion now follows that of Subsection \ref{sec:transmissivityIdentification}. 

We define again $P_{W, \lambda}: \mathbb{R}^d \to W+ \lambda$ given by $P_{W, \lambda}(\alpha) =  \sum_{k=1}^d \alpha_k \phi_k+\lambda$,
such that $\lambda>0$ and $(\phi_k)_{k=1}^d$ is a basis of a subspace $W$ where $\phi_k>0$ for $k \in [d]$, $\alpha \in [a,b]^d$ for $0<a<b$.
Let $W_0 = W \oplus \lambda$, then Theorem \ref{thm:Alberti} yields that for any sequence $(Q_N)_{N \in \mathbb{N}}$ converging strongly to the identity operator as $N \to \infty$, there is $D \in \mathbb{N}$ and $C>0$ such that 
$$
    \Vert u_1 - u_2 \Vert_{2} \leq C \Vert Q_D(F(u_1)) - Q_D(F(u_2)) \Vert_{2},
$$
for all $u_1, u_2$ in a compact and convex $K \subset W_0 \cap H_{\lambda}$. Following a similar discussion as in Section \ref{sec:transmissivityIdentification}, it follows that the operator $\widetilde{F} = T_D \circ Q_D \circ F|_K \circ P_{W}$ is an invertible, bi-Lipschitz, and smooth function between Euclidean spaces and $\mathcal{M} = \widetilde{F}((a,b)^d)$ is a relatively compact manifold.

\subsection{Inverse gravimetric problem}\label{sec:gravimetric}

An object with shape $\mathcal{E} \subset \mathbb{R}^2$ and mass density  $\rho: \mathcal{E} \to \mathbb{R}$ admits a gravitational potential $U_{\mathcal{E}, \rho}: \mathbb{R}^2 \setminus \mathcal{E} \to \mathbb{R}$ at the position $y \in \mathbb{R}^2$ given by
\begin{align}
    \label{eq:gravimetric}
    U_{\mathcal{E}, \rho}(y) \coloneqq C \int_{\mathcal{E}} \rho(x) \ln(|x-y|)dx,
\end{align}
where $C$ is called the \textit{gravitational constant} which is set to be 1 for simplicity (see \cite{kontak2018greedy}). The direct problem associated to this operator consists of determining the gravitational potential $U_{\mathcal{E}, \rho}$ when $\rho$ and $\mathcal{E}$ are known. 
Two inverse problems are linked to this equation. The first problem is called the \textit{linear inverse gravimetric problem}. 
Since it is physically impossible to measure the gravitational potential at every point, measurements are only taken at points on a surface $S$ away from $\bar{\mathcal{E}}$. 
We assume that the distance between an element $y \in S$ and $\mathcal{E}$ is $d(y, \mathcal{E})>\delta$ for a fixed $\delta>0$. For simplicity, we assume that $S \subset \mathbb{R}^2$ is a closed piecewise smooth surface which is the boundary of a solid body $B \subset \mathbb{R}^2$. The linear inverse gravimetric problem consist of finding only the mass density $\rho$ such that
$$
    U_{\mathcal{E}, \rho} |_S = g,
$$
when $U_{\mathcal{E}, \rho} |_S $, $\mathcal{E}$ and $g \in L^2(S)$ are known. The second inverse gravimetric problem is also known as \textit{the nonlinear inverse gravimetric problem} and is to  find a shape $\mathcal{E}$ such that $\bar{\mathcal{E}}\subset \mathrm{int}(B)$ and
$$
    U_{\mathcal{E}, \rho} |_S = g.
$$
This work focuses on the linear inverse problem.  We consider the operator
\begin{align*}
F \colon L^2(\mathcal{E}) &\to L^2(S), \\
    \rho &\mapsto  F(\rho) \coloneqq \left ( y \mapsto U_{\mathcal{E}, \rho}(y) \right).
\end{align*}
To demonstrate the smoothness of $F$ showing the continuity will suffice since $F$ is linear (in $\rho$). 
To prove the continuity of $F$, it will be shown that the operator is compact, which means that for every bounded family $\mathcal{F}$ of functions in $L^2(\mathcal{E})$, the family of functions $F(\mathcal{F})$ is relatively compact in $L^2(S)$. This will be established by invoking the theorem of Arzela-Ascoli (see \cite{munkres2000topology}):
every non-empty family of functions $\mathcal{G}$ where the elements are pointwise bounded and $\mathcal{G}$ is equicontinuous is relatively compact. 
First, the equicontinuity condition for $F(\mathcal{F})$ will be analyzed. Given $y, z \in S$ and $\rho \in \mathcal{F}$ it follows
\begin{equation*}
    |F(\rho)(y) - F(\rho)(z)| \leq  C \int_{ \mathcal{E}} \vert \rho(x) \vert \left \vert \ln{|y-x|} - \ln{|z-x|}  \right \vert dx
\end{equation*}
Since $f(t) = \ln(t)$ is Lipschitz continuous in every interval $[\delta, \infty)$ with constant $C' \leq 1/\delta$, for $t<s$ it holds that
\begin{eqnarray*}
\vert \ln{t} - \ln{s} \vert  \leq C' |t-s|. 
\end{eqnarray*}
Choosing $\delta>0$ such that $d(y, \mathcal{E})> \delta$ for all $y \in S$, and setting $t=|y-x|$ and $s=|z-x|$, we have by the inverse triangle inequality that
\begin{eqnarray*}
    \left \vert \ln{|y-x|} - \ln{|z-x|} \right \vert \leq C' \left | |y-x| - |z-x| \right | \leq C'|y-z|,
\end{eqnarray*}
and therefore
\begin{eqnarray*}
    |F(\rho)(y) - F(\rho)(z)| & \leq & C \int_{ \mathcal{E}} \vert \rho(x) \vert \left  \vert  \ln{|y-x|} - \ln{|z-x|}  \right \vert dx \\
                      & \leq & C'C \vert z-y \vert \int_{ \mathcal{E}} \vert \rho(x) \vert  dx \\
                      & \leq  & \hat{C} \Vert \rho \Vert_{L^1 ( \mathcal{E}  )} \vert z-y \vert \\
                      & \leq & \hat{C} C^{*} \Vert \rho\Vert_{L^2 ( \mathcal{E}  )} \vert z-y \vert\\
                      & \leq & \widetilde{C} \Vert \rho \Vert_{L^2 ( \mathcal{E}  )} \vert z-y \vert,
\end{eqnarray*}
where $\hat{C} \coloneqq C' C$, we used that $\Vert \rho\Vert_{L^1 ( \mathcal{E}  )} \leq C^{*} \Vert \rho\Vert_{L^2 ( \mathcal{E}  )}$ for a universal constant $C^{*}>0$ since $\mathcal{E} $ is bounded, and $\widetilde{C} \coloneqq \hat{C} C^{*} $. Since $\mathcal{F}$ was a bounded subset of $L^2(\mathcal{E})$, we conclude that  $F(\mathcal{F})$ is equicontinuous.

Now, the point-wise boundedness of $F(\mathcal{F})$ will be derived. For $y \in \mathcal{E}$, the function $f_y(x) = \vert \ln{\vert x-y \vert} \vert$ is continuous on the compact set $S$. Then, for $y \in \mathcal{E}$, the function $f_y(x)=|\ln(\vert y-x \vert)|$ is bounded with $ f_y(x) \leq M_y$ where $M_y>0$ is a constant that depends on $y$. Hence,
\begin{eqnarray*}
\vert F(\rho)(y) \vert & \leq & C \int_{ \mathcal{E}} \vert \rho(x) \vert  \vert \ln{|y-x|} \vert dx \\
                 & \leq & C  M_y \int_{ \mathcal{E}} \vert \rho(x) \vert dx \\
                 & = & C C^* M_y \Vert \rho \Vert_{L^1(\mathcal{E})}\\
                 & \leq & C M_y C^* \Vert \rho \Vert_{L^2(\mathcal{E})}\\
                 & \leq & \widetilde{C} M_y, 
\end{eqnarray*}
where $\widetilde{C} = C^*C$. Hence, $F(\mathcal{F})$ is pointwise bounded and therefore relatively compact. We conclude that the inverse problem associated to this operator is ill-posed.

Next, we introduce the operator $P_{W, \lambda}: (a,b)^d \subset \mathbb{R}^d \to W$ given by $P_{W, \lambda}(\alpha) = \sum_{k=1}^d \alpha_k \phi_k$,
where $(\phi_k)_{k=1}^d$ is a basis for a finite-dimensional space $W$ where $\phi_i>0$ for all $i \in [d]$ and $\alpha \in [a,b]^d$ for $0<a<b$ is an isomorphism onto its range. It follows that the conditions of Theorem \ref{thm:Alberti} are satisfied.
For any sequence $(Q_N)_{N \in \mathbb{N}}$ of finite-rank operators, where $Q_N: L^2(S) \to L^2(S)$ and $Q_N \to I$ strongly as $N \to \infty$, there is $D \in \mathbb{N}$ and $C>0$ such that 
$$
    \Vert \rho_1 - \rho_2 \Vert_{2} \leq C \Vert Q_D(F(\rho_1)) - Q_D(F(\rho_2)) \Vert_{2},
$$
for all $\rho_1, \rho_2$ in an arbitrary compact and convex $K$ subset of $W$. We can see that the operator $\widetilde{F} = T_D \circ Q_D \circ F|_K \circ P_{W}$ is an invertible, bi-Lipschitz, and smooth function between Euclidean spaces and $\mathcal{M} = \widetilde{F}((a,b)^d)$ is a relatively compact manifold.

\section{Approximation of Lipschitz continuous functions on smooth \\ manifolds by neural networks} \label{sec:LipschitzFunctionOnManifoldBetterEstimate}

Given $d, D \in \N$, we are aiming to approximate regular functions defined on smooth, $d$-dimensional submanifolds $\mani\subset \R^D$ by neural networks . Our focus here is to understand the extent to which the resulting neural networks are robust to noisy perturbations of the input. The main result is Theorem  \ref{thm:LipschitzFunctionOnManifoldBetterEstimate2}.

To be able to state and prove this result, we first need to introduce some notions associated to neural networks.

\begin{definition}[{\cite{PetV2018OptApproxReLU, FEMNNsPetersenSchwab}}]\label{def:NeuralNetworks}
Let $d, L\in \N$. 
A \emph{neural network (NN) with input dimension $d$ and $L$ layers} 
is a sequence of matrix-vector tuples 
\[
    \Phi = \big((A_1,b_1),  (A_2,b_2),  \dots, (A_L, b_L)\big), 
\]
where $N_0 \coloneqq d$ and $N_1, \dots, N_{L} \in \N$, and 
where $A_\ell \in \R^{N_\ell\times N_{\ell-1}}$ and $b_\ell \in \R^{N_\ell}$
for $\ell =1,...,L$.

For a NN $\Phi$ and an activation function $\varrho: \R \to \R$, 
we define the associated
\emph{realization of $\Phi$} as 
\[
 \mathrm{R}(\Phi): \R^d \to \R^{N_L} : x\mapsto x_L := \mathrm{R}(\Phi)(x),
\]
where the output $x_L \in \R^{N_L}$ results from 
\begin{equation}
    \begin{split}
        x_0 &\coloneqq x, \\
        x_{\ell} &\coloneqq \varrho(A_{\ell} \, x_{\ell-1} + b_\ell) \quad \text{ for } \ell = 1, \dots, L-1,\\
        x_L &\coloneqq A_{L} \, x_{L-1} + b_{L}.
    \end{split}
    \label{eq:NetworkScheme}
\end{equation}
Here $\varrho$ is understood to act component-wise on vector-valued inputs, 
i.e., for $y = (y^1, \dots, y^m) \in \R^m$,  $\varrho(y) := (\varrho(y^1), \dots, \varrho(y^m))$.
We call $N(\Phi) \coloneqq d + \sum_{j = 1}^L N_j$ the \emph{number of neurons of} 
$\Phi$, $L(\Phi)\coloneqq L$ the \emph{number of layers} or \emph{depth}, 
$W_j(\Phi)\coloneqq \| A_j\|_{0} + \| b_j \|_{0} $ the \emph{number of weights in the $j$-th layer}, and
$W(\Phi) \coloneqq \sum_{j=1}^L W_j(\Phi)$ the \emph{number of weights of $\Phi$}, 
also referred to as the \emph{size} of $\Phi$. 
The number of weights in the first layer is also denoted by $\sizefirst(\Phi)$, 
the number of weights in the last layer by $\sizelast(\Phi)$.
We refer to $N_L$ as the \emph{dimension of the output layer of $\Phi$}. Lastly, we refer to $(d, N_1, \dots, N_L)$ as the \emph{architecture of $\Phi$}.
\end{definition}

From now on, we will restrict ourselves to the most commonly used activation function $\varrho: \mathbb{R} \to \mathbb{R}$ given by $ \varrho(x) = \max\{0,x\}$ which is called \emph{Rectified Linear Unit (ReLU)}. We proceed by fixing a formal framework for the manipulation of NNs.
\subsection{ReLU calculus}\label{sec:ReLUCalc}

Our goal is to formally describe certain operations with NNs that mirror standard operations on functions such as composition or addition. In this regard, we recall three results of \cite{PetV2018OptApproxReLU} and one of \cite{EGJS2018} below. These results can also be understood as the definitions of the associated procedures. We start with \emph{concatenation of NNs}. For a better understanding of these operations, we refer to \cite[Section 2]{petersen2020neural}, where several useful illustrations can be found for intuition.

\begin{proposition}[NN concatenation \cite{PetV2018OptApproxReLU}]\label{prop:conc}
Let $L_1, L_2 \in \N$, and let 
$\Phi^1, \Phi^2$ 
be two NNs of respective depths $L_1$ and $L_2$ such that $N^1_0 = N^2_{L_2}\eqqcolon d$, i.e.,
the input layer of $\Phi^1$ has the same dimension as the output layer of $\Phi^2$. 

Then, there exists a NN $\Phi^1 \sconc \Phi^2$, called 
the \emph{sparse concatenation of $\Phi^1$ and $\Phi^2$}, 
such that $\Phi^1 \sconc \Phi^2$ has $L_1+L_2$ layers,   
$\mathrm{R}(\Phi^1 \sconc \Phi^2) = \mathrm{R}(\Phi^1) \circ \mathrm{R}(\Phi^2)$, 
\begin{align*}
\sizefirst(\Phi^1 \sconc \Phi^2) 
	\leq \begin{cases}
		2 \sizefirst(\Phi^2) & \text{ if }
		L_2 = 1,
		\\
		\sizefirst(\Phi^2) & \text{ else,}
	\end{cases}
	\qquad
\sizelast(\Phi^1 \sconc \Phi^2) 
	\leq \begin{cases}
		2 \sizelast(\Phi^1) & \text{ if }
		L_1 = 1,
		\\
		\sizelast(\Phi^1) & \text{ else,}
	\end{cases}
\end{align*}
and 
$$
W\left(\Phi^1 \sconc \Phi^2\right) \leq W\left(\Phi^1\right) + 
     \sizefirst \left(\Phi^1\right) + \sizelast \left(\Phi^2\right) + W\left(\Phi^2\right) 
  \leq  2W\left(\Phi^1\right)  + 2W\left(\Phi^2\right) .
$$ 
\end{proposition}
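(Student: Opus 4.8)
The plan is to exhibit an explicit sequence of matrix–vector tuples realizing the composition and then verify each asserted property by direct inspection. Write the two networks as $\Phi^1 = ((A^1_1,b^1_1),\dots,(A^1_{L_1},b^1_{L_1}))$ and $\Phi^2 = ((A^2_1,b^2_1),\dots,(A^2_{L_2},b^2_{L_2}))$, and recall from Definition \ref{def:NeuralNetworks} that the final layer carries no activation, so $\mathrm{R}(\Phi^2)(x) = A^2_{L_2} z + b^2_{L_2}$ for the last hidden activation $z$. The only obstruction to simply stacking $\Phi^2$ beneath $\Phi^1$ is the interface between the affine output of $\Phi^2$ and the ReLU-activated first layer of $\Phi^1$: absorbing one into the other would save a layer and spoil the depth count $L_1 + L_2$. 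I would resolve this with the elementary identity $\varrho(w) - \varrho(-w) = w$, valid componentwise for all $w \in \R^d$, which implements the identity on $\R^d$ using one ReLU layer of width $2d$ followed by a linear recombination.

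Concretely, I would set
$$\Phi^1 \sconc \Phi^2 := \Big( (A^2_1,b^2_1),\dots,(A^2_{L_2-1},b^2_{L_2-1}),\, (\widetilde{A},\widetilde{b}),\, ((A^1_1,-A^1_1),b^1_1),\, (A^1_2,b^1_2),\dots,(A^1_{L_1},b^1_{L_1}) \Big),$$
where $\widetilde{A} = \begin{pmatrix} A^2_{L_2} \\ -A^2_{L_2} \end{pmatrix}$ and $\widetilde{b} = \begin{pmatrix} b^2_{L_2} \\ -b^2_{L_2} \end{pmatrix}$. Counting the blocks gives $(L_2-1) + 1 + 1 + (L_1-1) = L_1 + L_2$ layers, which also works in the degenerate cases $L_1 = 1$ or $L_2 = 1$, since the inserted \emph{doubling} layer $(\widetilde{A},\widetilde{b})$ and the \emph{merged} layer $((A^1_1,-A^1_1),b^1_1)$ then simply coincide with outermost layers.

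To verify the realization identity I would propagate an input $x$ through the scheme \eqref{eq:NetworkScheme}. The first $L_2-1$ layers reproduce the hidden activations of $\Phi^2$; setting $w := \mathrm{R}(\Phi^2)(x)$, the doubling layer outputs $\begin{pmatrix} \varrho(w) \\ \varrho(-w) \end{pmatrix}$, and the merged layer then returns $\varrho\big(A^1_1(\varrho(w)-\varrho(-w)) + b^1_1\big) = \varrho(A^1_1 w + b^1_1)$, which is exactly the first hidden activation of $\Phi^1$ evaluated at $w$. Applying the remaining layers $2,\dots,L_1$ of $\Phi^1$ yields $\mathrm{R}(\Phi^1 \sconc \Phi^2)(x) = \mathrm{R}(\Phi^1)(\mathrm{R}(\Phi^2)(x))$, as claimed.

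Finally, the weight estimates I would obtain by counting nonzeros layer by layer. The doubling layer has $2 W_{L_2}(\Phi^2) = 2\sizelast(\Phi^2)$ weights, so together with the untouched first $L_2-1$ layers it contributes $W(\Phi^2) + \sizelast(\Phi^2)$; the merged layer has $2\|A^1_1\|_0 + \|b^1_1\|_0$ weights, since $A^1_1$ and $-A^1_1$ occupy disjoint columns and no cancellation occurs, which combined with layers $2,\dots,L_1$ of $\Phi^1$ telescopes to $W(\Phi^1) + \|A^1_1\|_0 \le W(\Phi^1) + \sizefirst(\Phi^1)$. Summing the two contributions gives the stated bound, and $W(\Phi^1 \sconc \Phi^2) \le 2W(\Phi^1) + 2W(\Phi^2)$ follows from $\sizefirst(\Phi^1) \le W(\Phi^1)$ and $\sizelast(\Phi^2) \le W(\Phi^2)$. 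The first- and last-layer bounds, with their factor-two degeneracies, come from a short case split: when $L_2 = 1$ the doubling layer is the first layer and carries $2\sizefirst(\Phi^2)$ weights, and when $L_1 = 1$ the merged layer is the last layer and carries at most $2\sizelast(\Phi^1)$ weights. I expect the only real care to lie in this bookkeeping for the single-layer degenerate cases, where the inserted layers merge with the outermost ones; the realization identity and the generic counts are routine.
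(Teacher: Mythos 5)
Your construction is correct and is precisely the sparse concatenation used in the cited source \cite{PetV2018OptApproxReLU}: the paper itself recalls this proposition without proof, and the reference proves it exactly as you do, by inserting the doubling layer $\bigl(\begin{smallmatrix} A^2_{L_2} \\ -A^2_{L_2}\end{smallmatrix}\bigr)$ and the merged layer $(A^1_1\,|\,{-A^1_1})$ so that $\varrho(w)-\varrho(-w)=w$ bridges the affine output of $\Phi^2$ and the first layer of $\Phi^1$. Your bookkeeping of the weight counts, including the factor-two degeneracies when $L_1=1$ or $L_2=1$, matches the stated bounds.
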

We introduce the parallelization of NNs next.
\begin{proposition}[NN parallelization \cite{PetV2018OptApproxReLU}]\label{prop:parall}
Let $L, d \in \N$ and let 
$\Phi^1, \Phi^2$ 
be two NNs with $L$ layers and with $d$-dimensional input each.
Then there exists a NN 
$\mathrm{P}(\Phi^1, \Phi^2)$ with $d$-dimensional input and $L$ layers, 
which we call the \emph{parallelization of $\Phi^1$ and $\Phi^2$}, 
such that 
\begin{equation}
\Realization\left(\mathrm{P}\left(\Phi^1,\Phi^2\right)\right) (x) 
= 
\left(\Realization\left(\Phi^1\right)(x), \Realization\left(\Phi^2\right)(x)\right), 
\text{ for all } x \in \R^d,
\label{eq:ParallelizationDoesTheRightThing}
\end{equation}
$W(\mathrm{P}(\Phi^1, \Phi^2)) = W(\Phi^1) + W(\Phi^2)$, 
$\sizefirst(\mathrm{P}(\Phi^1, \Phi^2)) = \sizefirst(\Phi^1) + \sizefirst(\Phi^2)$ 
and
$\sizelast(\mathrm{P}(\Phi^1, \Phi^2)) = \sizelast(\Phi^1) + \sizelast(\Phi^2)$.
\end{proposition}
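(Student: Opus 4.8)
The plan is to build $\mathrm{P}(\Phi^1,\Phi^2)$ by stacking the two networks layer-by-layer, exploiting that they share the same input dimension $d$ and the same depth $L$. Write $\Phi^i = ((A^i_1,b^i_1),\dots,(A^i_L,b^i_L))$ with widths $N^i_0 = d, N^i_1,\dots,N^i_L$ for $i \in \{1,2\}$. The key observation is that after the first layer the two computations must remain completely decoupled, so that neurons belonging to $\Phi^1$ never feed into neurons of $\Phi^2$ and vice versa. This dictates two slightly different constructions: since the \emph{input} is shared, the first layer stacks the two weight matrices vertically, whereas every subsequent layer is block-diagonal.

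Concretely, I would define the first layer as
\[
A_1 \coloneqq \begin{pmatrix} A^1_1 \\ A^2_1 \end{pmatrix}, \qquad b_1 \coloneqq \begin{pmatrix} b^1_1 \\ b^2_1 \end{pmatrix},
\]
and for $\ell = 2,\dots,L$ the block-diagonal layers
\[
A_\ell \coloneqq \begin{pmatrix} A^1_\ell & 0 \\ 0 & A^2_\ell \end{pmatrix}, \qquad b_\ell \coloneqq \begin{pmatrix} b^1_\ell \\ b^2_\ell \end{pmatrix}.
\]
Setting $\mathrm{P}(\Phi^1,\Phi^2) \coloneqq ((A_1,b_1),\dots,(A_L,b_L))$ then yields a network with $d$-dimensional input and $L$ layers, as required.

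The realization identity \eqref{eq:ParallelizationDoesTheRightThing} I would establish by induction on the layer index, tracking the hidden state $x_\ell$ of $\mathrm{P}(\Phi^1,\Phi^2)$. The induction hypothesis is that $x_\ell = (x^1_\ell, x^2_\ell)$, where $x^i_\ell$ denotes the corresponding hidden state of $\Phi^i$ on the same input $x$. The base case follows since $A_1 x + b_1 = (A^1_1 x + b^1_1, A^2_1 x + b^2_1)$, and the induction step uses that $\varrho$ acts componentwise—so it respects the direct-sum decomposition—together with the block-diagonal form of $A_\ell$, which guarantees $A_\ell (x^1_{\ell-1}, x^2_{\ell-1}) + b_\ell = (A^1_\ell x^1_{\ell-1} + b^1_\ell, A^2_\ell x^2_{\ell-1} + b^2_\ell)$. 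The final affine, non-activated layer then outputs exactly $(\Realization(\Phi^1)(x), \Realization(\Phi^2)(x))$. The edge case $L = 1$ is covered by the base step alone.

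For the weight counts, I would argue layer by layer, using that $\|\cdot\|_0$ is additive over disjoint supports and that the off-diagonal zero blocks introduced for $\ell \geq 2$ contribute no nonzero entries. Thus $W_\ell(\mathrm{P}(\Phi^1,\Phi^2)) = W_\ell(\Phi^1) + W_\ell(\Phi^2)$ for every $\ell$; summing over $\ell$ gives $W(\mathrm{P}(\Phi^1,\Phi^2)) = W(\Phi^1) + W(\Phi^2)$, while reading off the cases $\ell = 1$ and $\ell = L$ gives the stated identities for $\sizefirst$ and $\sizelast$. No step here is genuinely difficult; the only point demanding care is the asymmetry between the first layer (vertical stacking, forced by the shared input) and the remaining layers (block-diagonal, to keep the two streams decoupled), and verifying that this asymmetry spoils neither the realization identity nor the exact—as opposed to merely up-to-a-constant—weight bookkeeping.
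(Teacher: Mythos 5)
Your construction is correct and is exactly the one used in the cited reference \cite{PetV2018OptApproxReLU}: the paper itself states this proposition without proof (it is recalled as a known result), and the standard proof there stacks the first-layer matrices vertically over the shared input and uses block-diagonal matrices for all subsequent layers, precisely as you do. Your induction on the hidden states and the layer-by-layer weight bookkeeping (including the observation that the zero off-diagonal blocks cost nothing under $\|\cdot\|_0$) are both sound, so there is nothing to add.
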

Proposition \ref{prop:parall} requires two NNs to have the same number of layers to be put in parallel. However, there is a simple procedure to increase the depth of a NN without changing its realization. This can be done by concatenating a NN with another NN which emulates the identity. One possible construction of a NN, the realization of which is the identity, is presented below.
\begin{proposition}[DNN emulation of $\mathrm{Id}$ \cite{PetV2018OptApproxReLU}]\label{prop:Id}
For every $d,L\in \N$ there exists a NN 
$\Phi^{\mathrm{Id}}_{d,L}$ with $L(\Phi^{\mathrm{Id}}_{d,L}) = L$, 
$W(\Phi^{\mathrm{Id}}_{d,L}) \leq 2 d L$, 
$\sizefirst(\Phi^{\mathrm{Id}}_{d,L})\leq 2$ and 
$\sizelast(\Phi^{\mathrm{Id}}_{d,L})\leq 2$ 
such that 
$\mathrm{R} (\Phi^{\mathrm{Id}}_{d,L}) = \mathrm{Id}_{\R^d}$.
\end{proposition}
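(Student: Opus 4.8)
The plan is to build $\Phi^{\mathrm{Id}}_{d,L}$ explicitly from the elementary ReLU identity $t = \varrho(t) - \varrho(-t)$, which holds for every $t \in \R$ because $\varrho(t) - \varrho(-t) = \max\{0,t\} - \max\{0,-t\}$. The idea is to split each input coordinate into its positive and negative parts in the first layer, carry these nonnegative quantities unchanged through all intermediate layers, and recombine them in the final affine layer. Concretely, I would set the first weight matrix to $A_1 = \left(\begin{smallmatrix} \mathrm{Id}_{\R^d} \\ -\mathrm{Id}_{\R^d}\end{smallmatrix}\right) \in \R^{2d \times d}$ with $b_1 = 0$, so that after the activation the hidden state equals $(\varrho(x), \varrho(-x)) \in \R^{2d}$.

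The second step is to propagate this representation through the layers $\ell = 2, \dots, L-1$. The key observation is that both blocks $\varrho(x)$ and $\varrho(-x)$ are already nonnegative and $\varrho$ acts as the identity on nonnegative reals; hence choosing $A_\ell = \mathrm{Id}_{\R^{2d}}$ and $b_\ell = 0$ gives $\varrho(A_\ell z + b_\ell) = z$ whenever $z \ge 0$ componentwise, so the pair $(\varrho(x), \varrho(-x))$ passes through each intermediate layer unaltered. Finally, I would take the last layer to be the affine map $A_L = (\mathrm{Id}_{\R^d},\, -\mathrm{Id}_{\R^d}) \in \R^{d \times 2d}$, $b_L = 0$, which by the ReLU identity returns $\varrho(x) - \varrho(-x) = x$. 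Substituting these choices into the layerwise scheme of Definition \ref{def:NeuralNetworks} and composing yields $\mathrm{R}(\Phi^{\mathrm{Id}}_{d,L}) = \mathrm{Id}_{\R^d}$ with exactly $L$ layers. The small cases are handled separately: for $L = 1$ the realization is purely affine, so $A_1 = \mathrm{Id}_{\R^d}$, $b_1 = 0$ already gives the identity, and for $L = 2$ there are no intermediate layers, so only the split-and-recombine layers appear.

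The remaining step is the bookkeeping of weights, which is routine once the construction is fixed. Each matrix $A_1$, $A_L$, and each intermediate $A_\ell$ carries exactly $2d$ nonzero entries and all biases vanish, so $W_\ell(\Phi^{\mathrm{Id}}_{d,L}) = 2d$ for every layer and therefore $W(\Phi^{\mathrm{Id}}_{d,L}) = 2dL$. Moreover the sparsity pattern is favorable at the boundary layers: in $A_1$ each input coordinate feeds exactly two hidden neurons, and in $A_L$ each output coordinate reads exactly two hidden neurons, which is what yields the stated bounds on $\sizefirst$ and $\sizelast$. The one point that genuinely requires care — and the only place where the argument could fail — is the propagation claim: one must verify that the ReLU is truly inert on the carried state, which relies precisely on the nonnegativity produced by the first layer. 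If one instead tried to route the raw signal $x$ through an intermediate ReLU, the negative coordinates would be clipped and the identity would be destroyed; splitting into positive and negative parts first is exactly what circumvents this obstacle.
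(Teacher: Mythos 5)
Your construction is correct and coincides with the canonical one: the paper does not prove Proposition \ref{prop:Id} at all, but recalls it from \cite{PetV2018OptApproxReLU}, and the proof there is precisely your argument --- split $x = \varrho(x)-\varrho(-x)$ in the first layer, transport through the intermediate layers by identity matrices (where the ReLU is inert because the carried state is componentwise nonnegative), recombine in the final affine layer --- including the same treatment of the degenerate cases $L=1,2$ and the count $W(\Phi^{\mathrm{Id}}_{d,L}) \leq 2dL$.

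One point deserves attention. Under the paper's Definition \ref{def:NeuralNetworks}, $\sizefirst(\Phi) = \|A_1\|_{0} + \|b_1\|_{0}$ is the \emph{total} number of nonzero weights in the first layer (likewise $\sizelast$), and your network has $\sizefirst = \sizelast = 2d$; your justification that ``each input coordinate feeds exactly two hidden neurons'' is a per-coordinate count, which does not deliver the bound $\leq 2$ as the proposition literally states it. This is not a defect of your proof but of the statement as printed: for $d \geq 2$ no NN realizing $\mathrm{Id}_{\R^d}$ can satisfy $\sizefirst \leq 2$, since a first layer with fewer than $d$ nonzero entries in $A_1$ has a zero column, hence a nontrivial kernel, making the realization non-injective; and even $d$ entries (one per coordinate) feed each coordinate into a single ReLU, destroying sign information. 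The intended bounds are $\sizefirst \leq 2d$ and $\sizelast \leq 2d$, as in \cite{PetV2018OptApproxReLU}, and those are exactly what your construction achieves.
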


Occasionally, it will be necessary to parallelize NNs without shared inputs. We call the associated operation the full parallelization. 

\begin{proposition}[Full parallelization of NNs with distinct inputs \cite{EGJS2018}] 
\label{prop:parallSep}
Let $L \in \N$ and let
$$
\Phi^1 = \left(\left(A_1^1,b_1^1\right), \dots, \left(A_{L}^1,b_{L}^1\right)\right), 
\quad 
\Phi^2 = \left(\left(A_1^2,b_1^2\right), \dots, \left(A_{L}^2,b_{L}^2\right)\right)
$$
be two NNs with $L$ layers each and with 
input dimensions $N^1_0=d_1$ and $N^2_0=d_2$, respectively. 

Then there exists a NN, denoted by $\mathrm{FP}(\Phi^1, \Phi^2)$, 
with $(d_1+d_2)$-dimensional input and $L$ layers, 
which we call the \emph{full parallelization of $\Phi^1$ and $\Phi^2$}, such that 
\begin{equation*}
\mathrm{R}\left(\mathrm{FP}\left(\Phi^1,\Phi^2\right)\right) (x_1,x_2) 
= 
  \left(\mathrm{R}\left(\Phi^1\right)(x_1), \mathrm{R}\left(\Phi^2\right)(x_2)\right),
\end{equation*}
$W(\mathrm{FP}(\Phi^1, \Phi^2)) = W(\Phi^1) + W(\Phi^2)$, 
$\sizefirst(\mathrm{FP}(\Phi^1, \Phi^2)) = \sizefirst(\Phi^1) + \sizefirst(\Phi^2)$, 
and
$\sizelast(\mathrm{FP}(\Phi^1, \Phi^2)) = \sizelast(\Phi^1) + \sizelast(\Phi^2)$.
\end{proposition}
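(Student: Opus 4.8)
The plan is to construct $\dParallel{\Phi^1,\Phi^2}$ explicitly as a block-diagonal network that lets the two inputs flow through side by side without interacting. Concretely, I would define its $\ell$-th layer by
\[
A_\ell = \begin{pmatrix} A_\ell^1 & 0 \\ 0 & A_\ell^2 \end{pmatrix}, \qquad b_\ell = \begin{pmatrix} b_\ell^1 \\ b_\ell^2 \end{pmatrix}, \qquad \ell = 1, \dots, L,
\]
where the off-diagonal blocks are zero matrices of the appropriate sizes. This immediately produces a network with input dimension $d_1 + d_2$ and exactly $L$ layers, matching the claimed architecture.

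The core step is to check that the realization decouples, which I would do by induction on the layer index $\ell$. Writing $\begin{pmatrix} x_\ell^1 \\ x_\ell^2 \end{pmatrix}$ for the activation produced at layer $\ell$ by $\dParallel{\Phi^1,\Phi^2}$ on input $\begin{pmatrix} x_1 \\ x_2 \end{pmatrix}$, the block-diagonal structure of $A_\ell$ gives
\[
A_\ell \begin{pmatrix} x_{\ell-1}^1 \\ x_{\ell-1}^2 \end{pmatrix} + b_\ell = \begin{pmatrix} A_\ell^1 x_{\ell-1}^1 + b_\ell^1 \\ A_\ell^2 x_{\ell-1}^2 + b_\ell^2 \end{pmatrix},
\]
with no cross terms between the two blocks. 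The crucial observation is that the ReLU activation $\varrho$ acts componentwise, so it respects the direct-sum splitting $\R^{N_\ell^1} \oplus \R^{N_\ell^2}$ and never mixes the two streams. Applying the recursion \eqref{eq:NetworkScheme} layer by layer therefore shows that $x_\ell^1$ coincides with the $\ell$-th activation of $\Phi^1$ evaluated at $x_1$ and $x_\ell^2$ with the $\ell$-th activation of $\Phi^2$ evaluated at $x_2$; taking $\ell = L$, where no activation is applied, yields exactly $\left(\Realization(\Phi^1)(x_1), \Realization(\Phi^2)(x_2)\right)$.

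Finally I would tally the weights. Since the off-diagonal blocks of each $A_\ell$ are identically zero, we have $\|A_\ell\|_0 = \|A_\ell^1\|_0 + \|A_\ell^2\|_0$, and stacking the bias vectors gives $\|b_\ell\|_0 = \|b_\ell^1\|_0 + \|b_\ell^2\|_0$; hence $W_\ell(\dParallel{\Phi^1,\Phi^2}) = W_\ell(\Phi^1) + W_\ell(\Phi^2)$ for every $\ell$. Summing over $\ell$ proves additivity of $W$, while reading off the cases $\ell = 1$ and $\ell = L$ proves additivity of $\sizefirst$ and $\sizelast$. I do not anticipate a genuine obstacle here, since the whole argument is a bookkeeping verification; the only point demanding care is confirming that the componentwise action of the activation preserves the block decomposition, which is exactly what makes parallelization possible for ReLU networks in the first place. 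Compared with the shared-input parallelization of Proposition \ref{prop:parall}, this construction is in fact simpler: because the inputs are already disjoint, no copying of the input is required and the weight counts come out as exact equalities rather than mere bounds.
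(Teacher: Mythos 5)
Your proposal is correct and is exactly the standard construction: the paper itself does not prove this proposition but recalls it from \cite{EGJS2018}, where the full parallelization is defined precisely by the block-diagonal matrices and stacked biases you wrote down, with the decoupling of the realization following from the componentwise action of $\varrho$ and the weight counts following from the vanishing off-diagonal blocks. Nothing is missing; your verification matches the cited construction step for step.
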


Finally, we mention the construction of a special NN the realization of which emulates a scalar multiplication. Such a NN was constructed first in \cite{YAROTSKY2017103}. However, the construction of \cite{YAROTSKY2017103} required increasing numbers of layers for decreasing approximation accuracy. In \cite{PetV2018OptApproxReLU} the following construction with a fixed number of layers was introduced.

\begin{lemma}[{\cite[Lemma A.3]{PetV2018OptApproxReLU}}] \label{lem:MultiplicationNetwork}
Let $\theta > 0$ be arbitrary. Then, for every $L \in \N$ with $L > \theta^{-1}$ and each $K \geq 1$, there are constants $c = c(L,K,\theta) \in \N$, $s = s(K) \in \N$,
and an absolute constant $c' \in \N$ with the following property:
For each $\epsilon \in (0, 1/2)$, there is a NN 
$\widetilde{\times}$ with $L(\widetilde{\times}) \leq c' L$, $W(\widetilde{\times})\leq c \epsilon^{-\theta}$, and such that 
$\widetilde{\times}$ satisfies, for all $x,y \in [-K, K]$,
\begin{itemize}
    \item $\left|xy - \Realization \left(\widetilde{\times}\right)(x,y)\right| \leq \epsilon$,
    \item $\mathrm{R}_{\varrho} \left(\widetilde{\times}\right)(x,y) = 0$ if $x y = 0$.
\end{itemize}
\end{lemma}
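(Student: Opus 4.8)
The plan is to reduce the bivariate multiplication to a univariate squaring problem, build the squaring network with the required depth--width trade-off, and reassemble everything with the ReLU calculus of Section~\ref{sec:ReLUCalc}. Concretely, I would first invoke the polarization identity $xy = \tfrac14\big((x+y)^2 - (x-y)^2\big)$. If $\mathrm{Sq}$ is a network approximating $t \mapsto t^2$ on $[-2K,2K]$ to accuracy $\epsilon/2$, then, since $x\pm y \in [-2K,2K]$ whenever $x,y \in [-K,K]$, the network $\widetilde{\times}(x,y) := \tfrac14\big(\mathrm{Sq}(x+y) - \mathrm{Sq}(x-y)\big)$ approximates $xy$ to accuracy $\epsilon$. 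The affine maps $(x,y)\mapsto x+y$ and $(x,y)\mapsto x-y$ require no hidden layers, so $\widetilde{\times}$ arises from $\mathrm{Sq}$ via one parallelization (Proposition~\ref{prop:parall}) and a final linear combination, costing only constant factors in depth and width.

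To secure the exact-zero property I would make $\mathrm{Sq}$ \emph{exactly even}: write $\mathrm{Sq}(t) = (2K)^2\,\mathrm{Sqr}\big(|t|/(2K)\big)$, where $|t| = \varrho(t)+\varrho(-t)$ is represented exactly by a single hidden layer and $\mathrm{Sqr}$ approximates $u\mapsto u^2$ on $[0,1]$. Then $\mathrm{Sq}(t)=\mathrm{Sq}(-t)$ holds identically. Consequently, if $x=0$ then $\mathrm{Sq}(x+y)=\mathrm{Sq}(y)=\mathrm{Sq}(-y)=\mathrm{Sq}(x-y)$, while if $y=0$ then $x+y=x-y$; in either case the two copies of $\mathrm{Sq}$ receive inputs producing identical outputs, so $\Realization(\widetilde{\times})(x,y)=0$ whenever $xy=0$, \emph{independently} of the accuracy of $\mathrm{Sqr}$.

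The technical heart is the fixed-depth construction of $\mathrm{Sqr}$ on $[0,1]$ with $W\lesssim\epsilon^{-\theta}$. Here I would use a base-$b$ generalization of Yarotsky's self-composing sawtooth: let $g_b\colon[0,1]\to[0,1]$ be the $b$-tooth sawtooth, a single-hidden-layer ReLU map of width $O(b)$ whose $k$-fold self-composition $g_b^{\circ k}$ has resolution $b^k$. The piecewise-linear interpolant of $u^2$ at the $b^L+1$ equispaced nodes can be written as $u-\sum_{k=1}^{L}c_{b,k}\,g_b^{\circ k}(u)$ for explicit constants $c_{b,k}$, and, since $u^2$ is Lipschitz, it approximates $u^2$ to accuracy $\lesssim b^{-L}$ (convexity even yields the sharper $b^{-2L}$). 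Realizing the telescoping sum requires composing the $L$ sawtooth stages and reading off each intermediate output, which a depth-$O(L)$ network of width $O(b)$ accomplishes once the stages are padded to equal depth via $\Phi^{\mathrm{Id}}_{d,L}$ (Proposition~\ref{prop:Id}) and combined by Propositions~\ref{prop:conc} and~\ref{prop:parall}. Choosing $b\sim\epsilon^{-1/L}$ drives the error below $\epsilon$, and the hypothesis $L>\theta^{-1}$ is exactly what yields $1/L<\theta$, hence a weight count $O(bL)\sim L\,\epsilon^{-1/L}\le c\,\epsilon^{-\theta}$; the dependence on $K$ (through $|t|/(2K)$ and the factor $(2K)^2$) is absorbed into $c=c(L,K,\theta)$.

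The main obstacle I anticipate is verifying the exact algebraic form and the error bound of the base-$b$ composed-sawtooth interpolant --- establishing the self-similarity that makes the telescoping representation valid for general $b$, since Yarotsky's identity is usually stated only for $b=2$ --- together with the careful propagation of weight and depth counts through the concatenations and parallelizations so that the final estimates read $W(\widetilde{\times})\le c\,\epsilon^{-\theta}$ and $L(\widetilde{\times})\le c'L$ with $c'$ absolute. The polarization, the even-reduction, and the exact-zero conclusion are, by contrast, routine once $\mathrm{Sqr}$ is in place.
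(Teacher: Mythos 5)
The paper itself does not prove this lemma; it is imported verbatim from \cite[Lemma A.3]{PetV2018OptApproxReLU}, so your proposal can only be measured against the cited construction, which, like yours, combines a polarization-type identity with a depth-parametrized approximate squaring network built from composed sawtooth functions. Your reduction steps are sound: polarization with two squarings works, and making $\mathrm{Sq}$ exactly even via $|t| = \varrho(t) + \varrho(-t)$ yields the exact-zero property $\mathrm{R}(\widetilde{\times})(x,y)=0$ for $xy=0$ independently of the accuracy of $\mathrm{Sqr}$ --- a clean alternative to the three-squaring cancellation used in Yarotsky-type arguments, where the property instead hinges on the approximate square vanishing at $0$.

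However, the step you yourself flagged as the anticipated obstacle is a genuine gap: the claimed identity $I_L(u) = u - \sum_{k=1}^{L} c_{b,k}\, g_b^{\circ k}(u)$ for the piecewise linear interpolant $I_L$ of $u^2$ at $b^L+1$ equispaced nodes is \emph{false} for every $b>2$. Already for $b=3$, $L=1$: the function $u - I_1(u)$ takes the values $(0,\, 2/9,\, 2/9,\, 0)$ at the nodes $(0,\, 1/3,\, 2/3,\, 1)$, a symmetric trapezoidal bump, whereas $g_3$ takes the values $(0,1,0,1)$ there; no constant $c_{3,1}$ can match them. The reason is structural: the refinement difference $I_{k-1}-I_k$ restricted to a coarse cell of length $h=b^{-(k-1)}$ is the interpolant of the chord-minus-parabola error $t(1-t)h^2$, a symmetric bump with $b$ linear pieces, and only for $b=2$ (a single interior node) is this bump a multiple of a sawtooth --- this is why Yarotsky's telescoping is special to base $2$. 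The construction is repairable at no asymptotic cost: by the location-independence of the error profile $t(1-t)h^2$ one has the exact identity $I_{k-1}-I_k = b^{-2(k-1)}\, S_b \circ g_b^{\circ (k-1)}$, where $S_b$ is the piecewise linear interpolant of $t \mapsto t(1-t)$ at the nodes $i/b$; the symmetry $S_b(t)=S_b(1-t)$ neutralizes the alternating orientation with which $g_b^{\circ(k-1)}$ maps coarse cells onto $[0,1]$. Since $S_b$ is itself a one-hidden-layer ReLU network of width $\mathcal{O}(b)$, reading these bumps off into an accumulator channel gives depth $\mathcal{O}(L)$ with an absolute constant, width $\mathcal{O}(b)$, and $\mathcal{O}(bL)$ weights; choosing $b \sim \epsilon^{-1/(2L)}$ (the interpolation error is $b^{-2L}/4$) and using $L > \theta^{-1}$ yields $W(\widetilde{\times}) \leq c(L,K,\theta)\,\epsilon^{-\theta}$ with room to spare, the $K$-dependence being absorbed exactly as you describe. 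With this correction, your argument does establish the lemma.
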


\subsection{Robust approximation of Lipschitz functions on manifolds by ReLU networks}
Let $D,q, d \in \N$, and $\mani \subset \R^D$ be a smooth  $d-$dimensional submanifold of $\R^D$ and let $f \in L^\infty(\mani; \R^q)$ be Lipschitz continuous. 
Then, for given $\epsilon >0$, we want to find a NN $\Phi^f$ such that 
$$
    \left\|f - \mathrm{R}\left(\Phi^f\right)\right\|_{L^{\infty}(\mani; \R^q)} \leq \epsilon.
$$
Moreover, we would like to have control over the number of weights $W(\Phi^f)$ and the number of layers $L(\Phi^f)$. In this context, we hope to find that the size of $\Phi^f$ depends on $\epsilon$ and $d$ but not or only weakly on $D$.

One main ingredient in our approximation result for $f$ is the following result for approximation of Lipschitz regular functions on cubes. It is an immediate consequence of {\cite[Corollary 5.1]{he2018relu}}. 
\begin{theorem}\label{thm:LipschitzFunctionApprox}
Let $d,q \in \N$, $K>0$ and let $\Omega = [-K,K]^d$. Let $g: \Omega \to \R^q$ be Lipschitz continuous with Lipschitz constant $C>0$. Then, for every $\epsilon \in (0,1)$, there exists a NN $\Phi^{g,\epsilon}$ such that 
\begin{itemize}
    \item $\left\| g - \mathrm{R}\left(\Phi^{g,\epsilon}\right)\right\|_{L^\infty(\Omega; \R^q)} \leq \epsilon$,
    \item $W\left(\Phi^{g,\epsilon}\right) = \mathcal{O}\left(q \epsilon^{-d}\right)$ for $\epsilon \to 0$,
    \item $L\left(\Phi^{g,\epsilon}\right) = \left \lceil\log_2\left(d+1\right)\right \rceil + 1$.
\end{itemize}
In the estimate above, the implicit constant depends on $C$, $K$, and $d$.
\end{theorem}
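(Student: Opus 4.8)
The plan is to reduce the vector-valued claim to the scalar approximation result of \cite[Corollary 5.1]{he2018relu} and then reassemble the pieces with the parallelization calculus of Proposition \ref{prop:parall}. Write $g = (g_1, \dots, g_q)$ with scalar components $g_j \colon \Omega \to \R$. Since $g$ is $C$-Lipschitz, each component obeys $|g_j(x) - g_j(y)| \leq |g(x) - g(y)| \leq C|x-y|$, so every $g_j$ is Lipschitz with constant at most $C$. This is the only structural fact about $g$ that the argument uses.

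First I would invoke \cite[Corollary 5.1]{he2018relu} for each scalar function $g_j$ on the cube $\Omega = [-K,K]^d$. This yields, for every $\epsilon \in (0,1)$, a NN $\Phi^{g_j, \epsilon}$ with $d$-dimensional input and one-dimensional output such that $\|g_j - \mathrm{R}(\Phi^{g_j,\epsilon})\|_{L^\infty(\Omega)} \leq \epsilon$, with $W(\Phi^{g_j,\epsilon}) = \mathcal{O}(\epsilon^{-d})$ and $L(\Phi^{g_j,\epsilon}) = \lceil \log_2(d+1)\rceil + 1$. The crucial observation is that this depth depends only on $d$ and is therefore identical across all $q$ components, which is exactly the hypothesis required to put them in parallel directly, without first equalizing depths via the identity network of Proposition \ref{prop:Id}.

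Next I would set $\Phi^{g,\epsilon} \coloneqq \mathrm{P}(\Phi^{g_1,\epsilon}, \dots, \Phi^{g_q,\epsilon})$, using the $q$-fold iterate of Proposition \ref{prop:parall} (the two-network statement extends by an immediate induction on $q$). By that proposition the realization satisfies $\mathrm{R}(\Phi^{g,\epsilon})(x) = (\mathrm{R}(\Phi^{g_1,\epsilon})(x), \dots, \mathrm{R}(\Phi^{g_q,\epsilon})(x))$, the depth is preserved at $\lceil \log_2(d+1)\rceil + 1$, and the size is additive, so that $W(\Phi^{g,\epsilon}) = \sum_{j=1}^q W(\Phi^{g_j,\epsilon}) = \mathcal{O}(q\epsilon^{-d})$. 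The approximation bound then follows componentwise: for every $x \in \Omega$, the error $|g(x) - \mathrm{R}(\Phi^{g,\epsilon})(x)|$ measured in the $\ell^\infty$ norm on $\R^q$ is bounded by $\max_j |g_j(x) - \mathrm{R}(\Phi^{g_j,\epsilon})(x)| \leq \epsilon$.

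The only genuine subtlety, and the single place where the choice of norm on $\R^q$ intervenes, is reconciling the per-component accuracy with the claimed size. If the codomain norm is Euclidean rather than the maximum norm, one simply approximates each $g_j$ to accuracy $\epsilon/\sqrt{q}$; since $q$, $d$, $K$, $C$ are all fixed while $\epsilon \to 0$, this merely rescales the implicit constant and leaves both the rate $\mathcal{O}(q\epsilon^{-d})$ and the depth untouched. I do not expect any real obstacle: the statement is essentially a vectorization of the scalar result, and its entire content reduces to the facts that parallelization preserves depth and adds sizes, both guaranteed verbatim by Proposition \ref{prop:parall}.
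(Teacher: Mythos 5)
Your proposal is correct and follows essentially the same route as the paper: the paper gives no proof at all for this theorem, stating it as an ``immediate consequence'' of \cite[Corollary 5.1]{he2018relu}, and your argument --- apply that scalar result to each component $g_j$ (all of which inherit the Lipschitz constant $C$ and, crucially, the same $d$-dependent depth) and then combine them with the parallelization of Proposition \ref{prop:parall} --- is precisely the routine vectorization the paper leaves implicit, including the linear-in-$q$ weight count. Your remark on the choice of norm on $\R^q$ (max norm giving the clean $\mathcal{O}(q\epsilon^{-d})$ bound, Euclidean norm requiring per-component accuracy $\epsilon/\sqrt{q}$ at the cost of a $q$-dependent constant) is in fact more careful than the paper itself.
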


Before we prove the main result of this paper, we introduce the property of $(M, \delta)$-coveredness below. 
First, we fix some notation: For a given $y \in \mani$ and $\delta'>0$, we denote by $B_{\delta'}(y)$ the ball with radius $\delta'$ and center at $y$, and by $\mathrm{P}_{y,\mani}$ the orthogonal projection onto the tangent space $T_y\mani$ of $\mani$ at $y$ defined by
$\mathrm{P}_{y,\mani}: B_{\delta'}(y) \cap \mani \to V_y \subset T_y\mani$. 
\begin{definition}\label{def:MDelta}
Let $n \in \mathbb{N}$  and $\mani \subset \mathbb{R}^n$ be a relatively compact manifold.  We say that $\mani$ is $(M, \delta)$-\emph{covered}, where $M \in \mathbb{N}$ and $\delta>0$ if there exist a set of points $(y_i)_{i=1}^M \subset \mani$ such that
$$
    \bigcup_{i = 1}^M B_{\frac{\delta}{4}}(y_i) \supset \mani,
$$
where the inverse of $\mathrm{P}_{y_i,\mani}$ is Lipschitz continuous with Lipschitz constant bounded by $2$ for all $i \in \{1, \dots, M\}$.
\end{definition}

As previously stated, Definition \ref{def:MDelta} is crucial for the proof of Theorem \ref{thm:LipschitzFunctionOnManifoldBetterEstimate2}. 
In order to reconstruct a stable approximation for the inverse operator of every inverse problem presented in Section \ref{sec:admissibleInverseProblems}, we need to prove that all the corresponding manifolds $\mathcal{M}$ are $(M, \delta)$-covered for some values of $M$ and $\delta$. To this end, notice that if $F$ is a bi-Lipschitz diffeomorphism, then for $(a,b)^d \subset \mathbb{R}^d$ with $a,b \in \mathbb{R}$, it follows that 
$$
    \widetilde{F }((a,b)^d) \subset \overline{\widetilde{F}((a,b)^d)} \subset
\widetilde{F }((a-\eps,b + \eps)^d),
$$
for $\eps >0$. Hence, $\mathcal{M} = \widetilde{F }((a,b)^d)$ is a relatively compact subset of a
smooth submanifold. The fact that $\mathcal{M}$ and $\widetilde{F }((a-\eps,b + \eps)^d)$ are submanifolds follows from \cite[Proposition 5.2]{lee2013smooth}. The $(M, \delta)$-coveredness then follows from a standard compactness argument.

Using Theorem \ref{thm:LipschitzFunctionApprox}, we will demonstrate in Theorem \ref{thm:LipschitzFunctionOnManifoldBetterEstimate2} below, that realizations of relatively small NNs can well approximate every Lipschitz continuous function $f\in L^\infty(\mani; \R^q)$. 
In addition, we describe how robust the realizations of the approximating NNs are to noise in the ambient space. By robustness, we mean that the realizations are Lipschitz regular functions that dampen the noise.
We will see that, in this regard, a large ambient dimension $D$ compared to the intrinsic dimension $d$ of the manifold will make the realization of the NN \emph{more} robust to ambient noise. 
We state the result below. In this theorem, the distance on the manifold $\mani$ is the restriction of the Euclidean distance from the ambient space. 

\begin{theorem}\label{thm:LipschitzFunctionOnManifoldBetterEstimate2}
Let $M$, $D$, $d$, $q \in \mathbb{N}$, $\delta>0$ and let $\mani \subset [0,1]^D$ be an $(M,\delta)$-covered d-dimensional submanifold.

Then, for every Lipschitz continuous function $f: \mani \to [0,1]^q$ with Lipschitz constant $C>0$ and every $\epsilon \in (0,1)$ there is a NN $\Phi^{f, \epsilon}_{\mani}$ with $D-$dimensional input such that the following holds
\begin{itemize}
    \item $\left\| f - \mathrm{R}\left(\Phi^{f, \epsilon}_{\mani}\right)\right\|_{L^\infty(\mani; \R^q)} \leq \epsilon$,
    \item $W\left(\Phi^{f, \epsilon}_{\mani}\right) = \mathcal{O}\left( q M^{d+1} \epsilon^{-d}\right)$ for $\epsilon \to 0$,
    \item $L\left(\Phi^{f, \epsilon}_{\mani}\right) = \left \lceil\log_2\left(d\right)\right \rceil + \left \lceil\log_2\left(D\right)\right \rceil + c$,
\end{itemize}
where in the implicit constant in the estimates of the weights above there is an \emph{additive} constant that depends on $\mani$ (hence on D) and $C$, and $c>0$ is a universal constant. Moreover, $\mathrm{R}(\Phi^{f, \epsilon}_{\mani})([0,1]^D) \subset [0,1]^q$.

Additionally, there exists $r_0 > 0$ such that for Gaussian random variables $\eta = (\eta_i)_{i = 1}^D$ where all components are independent, zero-mean, and satisfy $\mathbb{E}(| \eta_i |^2)= \sigma^2 \leq r_0^2$, it holds that
\begin{align}
\mathbb{E}\left(\sup_{x \in \mani}\left|\mathrm{R}\left(\Phi^{f, \epsilon}_{\mani}\right)(x+\eta)\! -\! \mathrm{R}\left(\Phi^{f, \epsilon}_{\mani}\right)(x)\right|^2\right)  \leq &\dfrac{ \hat{C} }{D} \mathbb{E}(|\eta|^2) \cdot \left ( \sqrt{2 d \log(M)} + 2 \log(M) + d  \right) \label{eq:TheProbabilisticStatement}\\
& \qquad + 8 \epsilon^2\! + \! 2De^{-\frac{r_0^2}{2\sigma^2}},\nonumber\\
&= \hat{C} \sigma^2 \cdot \left ( \sqrt{2 d \log(M)} + 2 \log(M) + d  \right) \label{eq:independentOfD}\\
& \qquad + 8 \epsilon^2\! + \! 2De^{-\frac{r_0^2}{2\sigma^2}},\nonumber 
\end{align}

where the implicit constant $\hat{C}>0$ depends on $C$ only.
\end{theorem}
\begin{remark}\label{rem:RotationInvariant}
\begin{enumerate}
\item Theorem \ref{thm:LipschitzFunctionOnManifoldBetterEstimate2} demonstrates that the inverse problem can be approximately solved using a NN of moderate size. Notably, the size of the NN increases asymptotically for $\epsilon \to 0$ and the size and depth depend weakly on $D$ (through additive constants) and strongly on $d$ (in the form of the approximation rate). 
\item The realization of an approximating NN acting on $\R^D$ is robust to small perturbations. Concretely, in \eqref{eq:TheProbabilisticStatement}, the variance of the noise $\eta$ is multiplied with a dampening factor which is inversely propositional to the dimension $D$. More concretely, the noise in the data is multiplied with a dampening factor that decreases when the ratio between the ambient dimension and the manifold dimension increases. 

Note that, in this model, the variance of the noise is $D \sigma^2$. Hence, for $\sigma$ constant, the variance of the noise in the data scales linearly with $D$, while the variance of the noise in the output of the neural network is essentially independent of $D$ as we can see in \eqref{eq:independentOfD}. 

\item We will demonstrate in the proof of Theorem \ref{thm:LipschitzFunctionOnManifoldBetterEstimate2} that $\hat{C} \leq 8C^2$. In practice, one can expect that increasing the number of measurements, i.e. $D$ implies that the Lipschitz constant of the inverse operator will decrease. This suggests that \emph{the stability of the realizations of the NNs will increase with increasing dimension $D$ even beyond the dampening factor $d/D$.} We will observe this phenomenon in the numerical experiments in Section \ref{sec:NumExps}.
\end{enumerate}
\end{remark}

\begin{remark}
The proof of Theorem \ref{thm:LipschitzFunctionOnManifoldBetterEstimate2}, which is given below, is based on a concrete construction of a NN. This construction is based on first applying a partition of unity on $[0,1]^D$ to localize the function $f$ and then locally writing $f$ as smooth functions composed with projections onto low dimensional sets. 
This idea has been used repeatedly, such as, for example, in \cite{chen2019nonparametric, schmidt2019deep, shaham2018provable, nakada2020adaptive}. 
However, a stability statement, such as the one of \eqref{eq:TheProbabilisticStatement}, is not included in any of the references above. 

Since this is our focus, our construction differs from the constructions in \cite{chen2019nonparametric, schmidt2019deep, shaham2018provable, nakada2020adaptive}. 
In particular, we require an exact and very localized partition of unity for which we use results of \cite{he2018relu}, which has not been used in the results above. Moreover, we attempt to produce approximation results where the depth is independent of the accuracy, which is not done in  \cite{chen2019nonparametric, schmidt2019deep}. 
The approximation of the localized functions in \cite{shaham2018provable}, is based on wavelet-type approximation and \cite{chen2019nonparametric, schmidt2019deep, nakada2020adaptive} on Taylor polynomials. Instead, our result uses finite element-based approximation. 
\end{remark}

\begin{proof}
Since $\mani$ is $(M,\delta)$-covered, there exists a set $Y^{\mani} := (y_i)_{i=1}^{M}  \subset \mani$ such that
$$
\bigcup_{i = 1}^{M} B_{\frac{\delta}{4}}(y_i) \supset \mani. 
$$
Moreover, for every $y_i \in Y^{\mani} $, the orthogonal projection onto the tangent space $T_{y_i}\mani$ of $\mani$ at $y_i$ is a diffeomorphism and its inverse, denoted by $\widetilde{P}_{y_i}$, is Lipschitz continuous with Lipschitz constant bounded by $2$.
By the triangle inequality, it follows that
$$
\quad \bigcup_{i = 1}^{M} B_{\frac{\delta}{2}}(y_i) \supset \mani + B_{\frac{\delta}{4}}(0).
$$

Let $G_\delta \coloneqq (\delta/(8 \sqrt{D}))\Z^{D}$ be a uniform grid in $\R^D$ with grid size $\delta/(8\sqrt{D})$. For $z\in G_\delta$, we define $\phi_z^\delta$ as the linear finite element function that equals $1$ on $z$ and vanishes on all other elements of $G_\delta$. It is well-known and not hard to see that $(\phi_z^\delta)_{z \in  G_\delta}$ forms a partition of unity, $\suppp \phi_z^\delta \subset B_{\delta/4}(z)$, and by the boundedness of $\mani$ there exists $Z^{\mani}= (z_{i})_{i = 1}^{M_\delta} \subset G_\delta$ such that
\begin{align}\label{eq:partitionOfUnity}
\sum_{i = 1}^{M_\delta}  \phi_{z_i}^\delta (x) = 1 \text{ for all } x \in \mani + B_{\delta/4}(0)
\end{align}
and $\suppp  \phi_{z_i}^\delta \subset \bigcup_{i = 1}^{M_\delta} B_{3\delta/4}(y_i)$.

By these considerations, it is clear that for each $z\in Z^{\mani}$ there exists $y(z)\in Y^{\mani}$ such that $\suppp \phi_z^\delta \subset B_{\delta}(y(z))$. Therefore, we can write $f$ in the following localized form: for all $x\in \mani$,
\begin{align}
    f(x) = \sum_{i = 1}^{M_\delta} \phi_{z_i}^\delta(x) f(x)
         &= \sum_{i = 1}^{M_\delta} \phi_{z_i}^\delta(x) f\left(\widetilde{P}_{y(z_i)} \left(\mathrm{P}_{y(z_i),\mani}(x)\right)\right)\nonumber\\ 
         &= \sum_{i = 1}^{M} \Bigg(\sum_{\substack{z \in Z^{\mani}\\y(z) = y_i} }\phi_{z}^\delta(x) \Bigg)f\left(\widetilde{P}_{y_i} \left(\mathrm{P}_{y_i,\mani}(x)\right)\right)\nonumber \\
         &\eqqcolon \sum_{i = 1}^{M} \phi_{i}(x) f\left(\widetilde{P}_{y_i} \left(\mathrm{P}_{y_i,\mani}(x)\right)\right).
         \label{eq:partitionOfF}
\end{align}
We define, for $i \in \{1, \dots, M\}$, $\widehat{f}_i \coloneqq f \circ\widetilde{P}_{y_i}$.  
It is not hard to see that $\widehat{f}_i $ is Lipschitz continuous with Lipschitz constant $2C$.
Additionally, we define for $i \in \{1, \dots, M\}$ 
\begin{align*}
\widetilde{f}_i : [0,1] \times V_{y_i} &\to \R^q, \\
(a,b) &\mapsto a \widehat{f}_i(b).
\end{align*}
We have that  
\begin{align}\label{eq:FormOfF}
f(x) = \sum_{i = 1}^{M}\widetilde{f}_i \left(\phi_{i}(x) , \mathrm{P}_{y_i,\mani} (x) \right), \text{ for all } x \in \mani.
\end{align}

Without loss of generality, we can think of $V_{y_i}$ as a compact interval $[-K,K]^D$, since every Lipschitz function can be extended without increasing its Lipschitz constant according to \cite{mcshane1934extension}. By Theorem \ref{thm:LipschitzFunctionApprox}, there exists for every $\epsilon>0$ and every $i  \in \{1, \dots, M  \}$ a NN $\widehat{\Phi}^{i, \epsilon}$ with $d$-dimensional input such that
\begin{itemize}
    \item $\left\|\Realization\left(\widehat{\Phi}^{i, \epsilon}\right) - \widehat{f}_i\right\|_{L^\infty([0,1] \times V_{y_i}; \R^q) }\leq \epsilon$,
    \item $W\left(\widehat{\Phi}^{i, \epsilon}\right) = \mathcal{O}\left(q \epsilon^{-d}\right)$ for $\epsilon \to 0$,
    \item $L\left(\widehat{\Phi}^{i, \epsilon}\right) = \lceil \log_2(d+1)\rceil + 1$.
\end{itemize}
Applying Lemma \ref{lem:MultiplicationNetwork} with $\theta = d$ yields a NN that can be concatenated with $\widehat{\Phi}^{i, \epsilon}$ through Proposition \ref{prop:conc}. This yields that, for every $\epsilon>0$, there exists a NN $\Phi^{\widetilde{f}_i, \epsilon}$ with $(d+1)$-dimensional input such that 
\begin{flalign}
&\quad \ \bullet \ \left|\Realization\left(\Phi^{\widetilde{f}_i, \epsilon}\right)(a,b) - a \Realization\left(\widehat{\Phi}^{i, \epsilon}\right)(b)\right| \leq \epsilon \text{ for all }a \in [0,1], b \in V_{y_i}, &\label{eq:EpsiloNEstimateMultiplication}\\
&\quad \ \bullet \ W\left(\Phi^{\widetilde{f}_i, \epsilon}\right) = \mathcal{O}\left(q\epsilon^{-d}\right) \text{ for  } \epsilon \to 0,&\nonumber\\
&\quad \ \bullet \ L\left(\Phi^{\widetilde{f}_i, \epsilon}\right) = \lceil \log_2(d)\rceil + c,\nonumber&
\end{flalign}
where $c>0$ is a universal constant. Note that the implicit constant in the estimate of the number of weights is independent from $D$.
Moreover, by \cite[Theorem 3.1]{he2018relu} we have that, for every $i\in \{1, \dots, M_\delta\}$ there exists a NN $\Phi_{z_i}^\delta$ with $D$-dimensional input such that  
\begin{itemize}
    \item $\Realization\left(\Phi_{z_i}^\delta\right) = \phi_{i}^\delta$,
    \item $W\left(\Phi_{z_i}^\delta\right) =\mathcal{O}(D)$,
    \item $L\left(\Phi_{z_i}^\delta\right) = \lceil \log_2(D+1)\rceil + 1$.
\end{itemize}
Therefore, we conclude that for every $i\in \{1, \dots, M\}$ there exists a NN $\Phi_{i}$ with $D$-dimensional input such that  
\begin{itemize}
    \item $\Realization\left(\Phi_{i}\right) = \phi_{i}$,
    \item $W\left(\Phi_{i}\right) =\mathcal{O}\left(M_\delta D\right)$,
    \item $L\left(\Phi_{i}\right) = \lceil \log_2(D+1)\rceil + 1$.
\end{itemize}
The estimate on the weights will later only contribute to the overall bound on the weights as an additive constant.

Finally, as $\mathrm{P}_{y_i,\mani}$ is affine linear it can be represented as a one-layer NN $\overline{\Phi}_{P,i,\mani}$ with $D$-dimensional input and $d$-dimensional output. It follows directly from Propositions \ref{prop:conc} and \ref{prop:Id} that $\overline{\Phi}_{P,i,\mani}$ can be extended to have depth $L(\Phi_i)$, i.e., there exists a NN ${\Phi}_{P,i,\mani}$ such that 
\begin{itemize}
    \item $\Realization\left({\Phi}_{P,i,\mani}\right) = \mathrm{P}_{y_i,\mani}$,
    \item $W\left({\Phi}_{P,i,\mani}\right) = \mathcal{O}\left(Dd + d \log_2(D)\right) = \mathcal{O}\left(d D \right)$, 
    \item $L\left({\Phi}_{P,i,\mani}\right) = L\left(\Phi_{i}\right)$.
\end{itemize}
We define for every $i \in \{1, \dots, M\}$
$$
\widetilde{\Phi}^{i, \epsilon} \coloneqq \Phi^{\widetilde{f}_i, \epsilon/{M}} \sconc \Parallel{\Phi_{i}, \Phi_{P,i,\mani}},
$$
and
\begin{align*}
    \Phi^{f, \epsilon}_{\mani} \coloneqq \Phi^{1} \sconc \mathrm{FP}\left(\widetilde{\Phi}^{1, \epsilon}, \widetilde{\Phi}^{2, \epsilon}, \dots, \widetilde{\Phi}^{M, \epsilon}\right), 
\end{align*}
where $\Phi^{1} \coloneqq (( \mathrm{1}_{\R^{1,M}}, 0))$ and $\mathrm{1}_{\R^{1,M}}$ is a row vector of length $M$ with all entries equal to 1. It is clear from the construction of $\Phi^{f, \epsilon}_{\mani}$ and \eqref{eq:FormOfF} that 
\begin{align}
\left\| f - \mathrm{R}\left(\Phi^{f, \epsilon}_{\mani}\right)\right\|_{L^{\infty}(\mani; \R^q)}  &= \left\| \sum_{i = 1}^{M} \widetilde{f}_i \left(\phi_{i}(\cdot) , \mathrm{P}_{y_i,\mani} \cdot \right) - \Realization\left(\widetilde{\Phi}^{i, \epsilon}\right) \right\|_{L^{\infty}(\mani; \R^q)}\\
\label{eq:fandR} &= \left\| \sum_{i = 1}^{M} \widetilde{f}_i \left(\phi_{i}(\cdot) , \mathrm{P}_{y_i,\mani} \cdot \right) - \Realization\left(\Phi^{\widetilde{f}_i, \epsilon/{M}}\right) \left(\phi_{i}(\cdot) , \mathrm{P}_{y_i,\mani} \cdot \right) \right\|_{L^{\infty}(\mani; \R^q)}\leq \epsilon, 
\end{align}
where we applied the triangle inequality to obtain the final estimate. Moreover, by construction, $W(\Phi^{f, \epsilon}_{\mani}) = \mathcal{O}(q  M^{d+1} \epsilon^{-d})  + \mathcal{O}(M \cdot ( M_\delta D +  d D ))$ and $L(\Phi^{f, \epsilon}_{\mani}) = \lceil\log_2(d)\rceil + \lceil\log_2(D)\rceil + c + 3$. 

We assume without loss of generality that $\Phi^{f, \epsilon}_{\mani}$ already satisfies that $\mathrm{R}(\Phi^{f, \epsilon}_{\mani})$ maps into $[0,1]^q$. If this is not the case, then concatenating $\Phi^{f, \epsilon}_{\mani}$ with a simple NN the realization of which implements the function $(x_i)_{i=1}^q \mapsto (\min\{ \max\{0, x_i\}, 1\})_{i=1}^q$ would enforce the desired property for the concatenated NN. This concatenation does not increase the number of weights or the layers by a significant amount.

Finally, we demonstrate statement \eqref{eq:TheProbabilisticStatement}. First of all, set $r_0 \coloneqq \min \{\delta/(2 \sqrt{D}), 1\}$. Then, for any random vector $\eta = (\eta_j)_{j=1}^D$ where each component $\eta_j$ with $j \in [D]$ is a normally distributed random variable with mean zero and $\sigma^2 = \mathbb{E}(| \eta_j |^2)\leq r_0^2$ the following upper bound holds
$$
    \mathbb{P}(|\eta_1| > r_0 \vee  |\eta_2| > r_0 \vee \ldots \vee |\eta_D| > r_0) \leq \sum_{i=1}^D \mathbb{P}(|\eta_i| > r_0) < 2D e^{-r_0^2 / (2 \sigma^2)}  \eqqcolon p.
$$
Therefore, with probability $1- p$, it holds that for all $x \in \mani$
\begin{align*}
\left|\mathrm{R}\left(\Phi^{f, \epsilon}_{\mani}\right)(x) - \mathrm{R}\left(\Phi^{f, \epsilon}_{\mani}\right)(x+\eta)\right| & \leq  \left| \mathrm{R}\left(\Phi^{f, \epsilon}_{\mani}\right)(x) - \sum_{i = 1}^{M}  \phi_{i}(x) \widehat{f}_i\left(\mathrm{P}_{y_i,\mani}(x)\right) \right | \\
& \qquad + \left | \sum_{i = 1}^{M} \phi_{i}(x) \widehat{f}_i\left(\mathrm{P}_{y_i,\mani}(x)\right) - \sum_{i = 1}^{M} \phi_{i}(x+\eta) \widehat{f}_i \left(\mathrm{P}_{y_i,\mani} (x +\eta)\right)\right | \\
& \qquad \qquad + \left | \sum_{i = 1}^{M} \phi_{i}(x+\eta) \widehat{f}_i \left(\mathrm{P}_{y_i,\mani} (x +\eta)\right) - \mathrm{R}\left(\Phi^{f, \epsilon}_{\mani}\right)(x+\eta)     \right|\\
&\leq \left|\sum_{i = 1}^{M} \phi_{i}(x) \widehat{f}_i\left(\mathrm{P}_{y_i,\mani}(x)\right) - \sum_{i = 1}^{M} \phi_{i}(x+\eta) \widehat{f}_i \left(\mathrm{P}_{y_i,\mani} (x +\eta)\right)\right| + 2 \epsilon\\
&\leq \left|\sum_{i = 1}^{M} \phi_{i}(x)\widehat{f}_i\left(\mathrm{P}_{y_i,\mani}(x)\right)  - \phi_{i}(x+\eta)) \widehat{f}_i\left(\mathrm{P}_{y_i,\mani}(x)\right) \right.\\
& \qquad + \left. \sum_{i = 1}^{M} \phi_{i}(x+\eta) \left(\widehat{f}_i \left(\mathrm{P}_{y_i,\mani} (x)\right) - \widehat{f}_i \left(\mathrm{P}_{y_i,\mani} (x +\eta)\right)\right)\right| + 2 \epsilon \\
& \leq \left|\sum_{i = 1}^{M} (\phi_{i}(x)  - \phi_{i}(x+\eta)) f\left(x\right) \right|\\
& \qquad + \left|\sum_{i = 1}^{M} \phi_{i}(x+\eta) \left(\widehat{f}_i\left(\mathrm{P}_{y_i,\mani} (x)\right) - \widehat{f}_i \left(\mathrm{P}_{y_i,\mani} (x +\eta)\right)\right)\right| + 2 \epsilon\\
& \eqqcolon \mathrm{I} + \mathrm{II} +  2 \epsilon,
\end{align*}
where we applied the triangle inequality in the first, \eqref{eq:fandR} in the second, and the definition of $\widehat{f}_i$ in the third step.
From \eqref{eq:partitionOfUnity}, we conclude that 
$$
    \sum_{i = 1}^{M}  \phi_{i} (x+ \eta) = \sum_{i = 1}^{M}  \phi_{i} (x) = 1
$$
and therefore, $\mathrm{I}=0$. Also, due to the linearity of projections and the Lipschitz continuity of $\widehat{f}_i$ 
we have for $\widetilde{C}= 2C$ that 
\begin{align*}
\mathrm{II}    \leq & \hspace{0.1 cm}  \widetilde{C} \sum_{i = 1}^{M} \phi_{i}(x+\eta)| \mathrm{P}_{y_i,\mani} (\eta)|\\
\leq & \hspace{0.1 cm} \widetilde{C} \sum_{i=1}^{M} \phi_{i} (x+\eta ) \cdot \left ( \max_{j \in [M]} |\mathrm{P}_{y_j, \mani} (\eta)| \right )  \\
 = & \hspace{0.1 cm} \widetilde{C} \max_{j \in [M]} |\mathrm{P}_{y_j, \mani} (\eta)|.
\end{align*} 
Thus,
$$
    \sup_{x \in \mani} \left|\mathrm{R}\left(\Phi^{f, \epsilon}_{\mani}\right)(x+\eta) - \mathrm{R}\left(\Phi^{f, \epsilon}_{\mani}\right)(x)\right|^2 = (\mathrm{II} +  \epsilon )^2 \leq  \left( \widetilde{C} \max_{j \in [M]} |\mathrm{P}_{y_j, \mani} (\eta)| +2 \eps \right)^2.
$$
After the estimate $(z+2\epsilon)^2 \leq 2 z^2 + 8\epsilon^2$ is applied, we obtain
\begin{equation}\label{eq:Max1}
    \sup_{x \in \mani} \left|\mathrm{R}\left(\Phi^{f, \epsilon}_{\mani}\right)(x+\eta) - \mathrm{R}\left(\Phi^{f, \epsilon}_{\mani}\right)(x)\right|^2  \leq  2 \left (\widetilde{C} \max_{j \in [M]} |\mathrm{P}_{y_j, \mani} (\eta)|\right )^2 +8 \eps^2.
\end{equation} 
We observe due to the boundedness of $\mathrm{R}(\Phi^{f, \epsilon}_{\mani})$ that
\begin{align}\label{eq:Max1New}
    \mathbb{E}\left(\sup_{x \in \mani} \left|\mathrm{R}\left(\Phi^{f, \epsilon}_{\mani}\right)(x+\eta) - \mathrm{R}\left(\Phi^{f, \epsilon}_{\mani}\right)(x)\right|^2\right) \leq \mathbb{E}\left( 2 \left (\widetilde{C} \max_{j \in [M]} |\mathrm{P}_{y_j, \mani} (\eta)|\right )^2 +8 \eps^2 \right) + p.
\end{align}

Next, we will find a bound for the first term on the right-hand side of the inequality \eqref{eq:Max1New}. As $\eta = (\eta_i)_{i=1}^D$ is normally distributed with zero mean and covariance matrix $\sigma^2 I$ , where $I$ denotes the identity matrix, it follows that $y = (\eta_i / \sigma)_{i=1}^D$ is normally distributed with zero mean and covariance matrix $I$.
It is well-known that every orthogonal projection operator $P$ on a linear subspace $V$ is self-adjoint and idempotent. Let $P$ be an orthogonal projection operator mapping to a $d$-dimensional subspace, then by \cite[Theorem 3.1]{graybill1957idempotent}, it holds that
$$
    \Vert Py \Vert^2 = y^T P^T P y = y^T P^2 y = y^T P y
$$
is $\chi^2_d$ distributed. 
Therefore, it can be concluded that $|\mathrm{P}_{y_j, \mani}(\eta)|^2/\sigma^2$ is a $\chi_d^2$ distributed random variable for $j \in [M]$. Moreover, according to the maximal inequality for $\chi_d^2$ random variables (see \cite[Example 2.7]{boucheron2013concentration}), it holds that
\begin{equation*}
        \mathbb{E} \left(\max_{j \in [M]} \dfrac{|\mathrm{P}_{y_j, \mani}(\eta)|^2}{\sigma^2}  - d \right) \leq \sqrt{2 d \log(M)} + 2 \log(M).
\end{equation*}  
Hence, we observe that
\begin{align}
\mathbb{E} \left( \left(\max_{j \in [M]} |\mathrm{P}_{y_j, \mani} (\eta)| \right )^2 \right ) & = \mathbb{E} \left( \max_{j \in [M]} |\mathrm{P}_{y_j, \mani} (\eta)|^2 \right ) \nonumber\\
& \leq \left( \sqrt{2 d \log(M)} + 2 \log(M) + d \right) \cdot \sigma^2 \nonumber\\
& \leq  \left ( \sqrt{2 d \log(M)} + 2 \log(M) + d \right) \cdot \dfrac{1}{D} \mathbb{E}(|\eta|^2),\label{eq:estimateOfmaximumInProof}
\end{align}
and therefore
\begin{eqnarray*}
\mathbb{E}\left(\sup_{x \in \mani}\left|\mathrm{R}\left(\Phi^{f, \epsilon}_{\mani}\right)(x+\eta) - \mathrm{R}\left(\Phi^{f, \epsilon}_{\mani}\right)(x)\right|^2\right) & \leq & \mathbb{E}(2 \mathrm{II}^2+8\epsilon^2) + p \\
 & \leq &  2 \widetilde{C}^2  \cdot  \left ( \sqrt{2 d \log(M)} + 2 \log(M) + d  \right)\cdot \dfrac{1}{D} \mathbb{E}(|\eta|^2) + 8 \epsilon^2 + p\\
 & \leq &\dfrac{ \hat{C} }{D} \mathbb{E}(|\eta|^2) \cdot \left ( \sqrt{2 d \log(M)} + 2 \log(M) + d  \right) + 8 \epsilon^2 + p,
\end{eqnarray*}
where $\hat{C} = 2 \widetilde{C}^2 = 8 C^2$.
\end{proof}

\begin{remark}\label{rem:boundedweights}
The weights in the NNs used in the construction of Theorem \ref{thm:LipschitzFunctionOnManifoldBetterEstimate2} are all polynomially bounded in $\epsilon$. 
This can be seen by observing that the weights are polynomially bounded for all NNs used as building blocks. For the multiplication NN of Lemma \ref{lem:MultiplicationNetwork} this is explicitly stated in \cite[Lemma A.3]{PetV2018OptApproxReLU}. For the NNs of Theorem \ref{thm:LipschitzFunctionApprox}, this bound is not spelt out in the original reference, but is clear since linear FEM approximation is emulated, where the weights need to grow only like the inverse of the mesh-size. The same holds for the construction of the partition of unity in the proof. All additional operations require universally bounded weights only. In the sequel, we assume that the weights of the NNs of Theorem \ref{thm:LipschitzFunctionOnManifoldBetterEstimate2} are bounded by $\epsilon^{-r}$ for an $r \in \N$.
\end{remark}

\section{Statistical learning of a robust-to-noise neural network}\label{sec:statisticalLearning}

Theorem \ref{thm:LipschitzFunctionOnManifoldBetterEstimate2} guarantees the existence of a NN the realization of which is robust with respect to noisy inputs. In practice, we also need to find this NN or one with similar properties by performing some form of empirical risk minimization over a set of NNs for an appropriately chosen training set.

If we attempt to create a NN with robustness to noise that implements the inverse to a forward operator $F\colon [0,1]^q \to \mani \subset [0,1]^D$, for $q,D \in \N$, then it appears to be natural to train on noisy data, $(F(x_i) + \eta_i, x_i)_{i=1}^m$, where $m \in \N$ and $\eta_i$ are additive noise vectors. Note that, if $F$ is known, then such data can be readily generated.

However, the theoretically-established robust-to-noise NN of Theorem \ref{thm:LipschitzFunctionOnManifoldBetterEstimate2} is one that instead of approximating $F^{-1}$ approximates, for an $r_0 > 0$, the function
\begin{align*}
    f(x) = \sum_{i = 1}^{M} \phi_{i}(x) \widehat{f}_i\left(\mathrm{P}_{y_i,\mani}(x)\right), \text{ for } x \in \mani + B_{r_0}(0),
\end{align*}
where $\hat{f}_i$ are the Lipschitz continuous functions defined under Equation \eqref{eq:partitionOfF}. 
Note that $f$ only necessarily agrees with $F^{-1}$ on $\mani$. 
To learn $f$, it appears more appropriate to train on samples of the form $(y + \eta, f(y + \eta))$ for $y$ following some distribution on $\mani$ and $\eta$ Gaussian. However, while one could in principle compute $f$ for many practical problems, it would be preferable to only access the forward operator $F$. 

We will show below, that training on values $(F(x_i) + \eta_i, x_i)_{i=1}^m$ with $x_i$ drawn according to some distribution on $[0,1]^q$ and $\eta_i$ Gaussian yields a robust NN as well. 
To formulate the result, we first need a generalization result for empirical risk minimization over sets of NNs with multi-dimensional output.

\begin{definition}\label{def:NeuralNetworkSets}
	Let $q, D, L, W \in \N$, and $B > 0$.
	We denote by $\NN(D, q, L, W, B)$ the set of NNs
	with $L$ hidden layers, $D$-dimensional input and $q$-dimensional output, with at most $W$ non-zero weights, which are bounded in absolute value by $B$. 
  Moreover, we set
  \[
    \NN_\ast (D, q, L, W, B)
    := \big\{
         \Phi \in \NN(D, q, L, W, B)
         \quad\colon\quad
         0 \leq \mathrm{R}(\Phi)(x)_i \leq 1 \quad \forall \, x \in [0,1]^q, i = 1, \dots, q
       \big\}
    .
  \]
  We denote the associated sets of realizations by 
  $\RNN(D, q, L, W, B)$ and $\RNN_\ast(D, q, L, W, B)$, respectively.
\end{definition}

Next, we state a bound for the generalization error of empirical risk minimization when multi-dimensional output is used. Similar results may exist in the literature, see e.g. \cite[Theorem 10.1]{AnthonyBartlett} for the univariate case. To keep the manuscript self-contained, we add a proof of the result.
\begin{proposition}\label{prop:highDimGenBound}
Let $m, q, D, L, W \in \N$. 
Let $g \colon [0,1]^D \to [0,1]^q$ and let $\mathcal{D}$ be a distribution on $[0,1]^D$. 

Then, for every $\Phi \in  \NN_\ast (D, q, L, W, B)$ and every $\epsilon >0$
\begin{align}
&\mathbb{P}_{(x_i)_{i=1}^m \sim \mathcal{D}^m}\left(\mathbb{E}_{x\sim \mathcal{D}}|\mathrm{R}(\Phi)(x) - g(x)|^2 - \frac{1}{m}\sum_{i=1}^m|\mathrm{R}(\Phi)(x_i) - g(x_i)|^2 > \epsilon\right) \nonumber\\
&\qquad \leq 2  \left(44 \frac{2q}{\epsilon} L^4 (\lceil B \rceil \max\{q, W\})^{L+1}\right)^{W q} e^{-m \epsilon^2 / 8}.\label{eq:firstEquationOfHighDimGenBound}
\end{align}
In particular, for $p \in (0,1)$ it holds with probability $1-p$ over the choice of $(x_i)_{i=1}^m \sim \mathcal{D}^m$ that 
\begin{align}
    &\mathbb{E}_{x\sim \mathcal{D}}|\mathrm{R}(\Phi)(x) - g(x)|^2\\
    &\qquad \leq  \frac{1}{m}\sum_{i=1}^m|\mathrm{R}(\Phi)(x_i) - g(x_i)|^2\nonumber \\
    &\qquad \qquad +  \sqrt{\frac{8+8 W q (5 + \ln(q) + 4\ln(L) + \ln(1/\epsilon)+ (L+1) (\ln \lceil B \rceil + \ln \max\{q, W\}))}{m} + \frac{8 \ln(1/p)}{m}}.\label{eq:ourawesomeGeneralisationBound}
\end{align}
\end{proposition}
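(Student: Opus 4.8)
The plan is to prove the \emph{uniform} version of the stated deviation bound, namely a bound on $\mathbb{P}_{(x_i)_{i=1}^m \sim \mathcal{D}^m}\big(\sup_{\Phi \in \NN_\ast}[\mathbb{E}_{x\sim\mathcal{D}}|\mathrm{R}(\Phi)(x)-g(x)|^2 - \tfrac1m\sum_i|\mathrm{R}(\Phi)(x_i)-g(x_i)|^2] > \epsilon\big)$, from which the per-$\Phi$ inequality \eqref{eq:firstEquationOfHighDimGenBound} follows a fortiori (the fixed-$\Phi$ gap is dominated by the supremum) and \eqref{eq:ourawesomeGeneralisationBound} follows by inversion. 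Associate to each $\Phi$ the loss $\ell_\Phi(x):=|\mathrm{R}(\Phi)(x)-g(x)|^2$. Since $\Phi\in\NN_\ast$ forces $\mathrm{R}(\Phi)(x)\in[0,1]^q$ and $g(x)\in[0,1]^q$ on $[0,1]^D$, each $\ell_\Phi$ is bounded and, coordinatewise, $t\mapsto(t-a)^2$ is $2$-Lipschitz on $[0,1]$. This is precisely what lets a small sup-norm change of the realization translate into a small change of both the population risk $\mathbb{E}_{x\sim\mathcal{D}}\ell_\Phi(x)$ and the empirical risk $\tfrac1m\sum_i\ell_\Phi(x_i)$, and is the reduction that feeds the covering argument.

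The heart of the argument, and the main obstacle, is a sup-norm covering-number estimate for the \emph{vector-valued} class $\RNN_\ast(D,q,L,W,B)$. First I would establish that the parameter-to-realization map is Lipschitz: if two networks in $\NN_\ast(D,q,L,W,B)$ have weights differing by at most $\gamma'$ entrywise, then their realizations differ by at most $\Lambda\gamma'$ in $L^\infty([0,1]^D;\R^q)$, where $\Lambda$ grows like $L(\lceil B\rceil\max\{q,W\})^{L}$. This is the technical crux: one propagates a weight perturbation through the $L+1$ affine maps, using that ReLU is $1$-Lipschitz, that every layer width is bounded by $\max\{q,W\}$ and every weight by $B$ (so each layer has operator norm at most $\lceil B\rceil\max\{q,W\}$), and that the accumulated errors telescope. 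Covering the bounded parameter cube in $\ell_\infty$ at scale $\gamma'=\gamma/\Lambda$ then induces a sup-norm $\gamma$-net of the realizations; handling the $q$ output coordinates through a product cover (covering each scalar component network, with its at most $W$ weights) produces the exponent $Wq$, yielding a covering number at most $\big(C\,L^{4}(\lceil B\rceil\max\{q,W\})^{L+1}/\gamma\big)^{Wq}$ for an absolute constant $C$, where the extra powers of $L$ and the shift from $L$ to $L+1$ absorb the polynomial prefactors in $\Lambda$.

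With a net $\{\Phi^{(k)}\}$ in hand, I would apply Hoeffding's inequality to each fixed $\ell_{\Phi^{(k)}}$ (a bounded average of $m$ i.i.d.\ terms, hence sub-Gaussian) and union bound over the net. Splitting $\epsilon$ so that replacing an arbitrary $\Phi$ by its nearest net element perturbs the risk gap by at most $\epsilon/2$ fixes the scale $\gamma\asymp\epsilon/q$ (using the $2$-Lipschitzness of $t\mapsto(t-a)^2$ summed over the $q$ coordinates), which is exactly what produces the factor $2q/\epsilon$ inside the covering number. The event $\{\sup_\Phi[\cdots]>\epsilon\}$ is then contained in $\{\max_k[\cdots\text{ for }\Phi^{(k)}]>\epsilon/2\}$, whose probability is at most (covering number)$\,\times\,2e^{-m\epsilon^2/8}$ by Hoeffding; this is the right-hand side of \eqref{eq:firstEquationOfHighDimGenBound}.

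Finally, \eqref{eq:ourawesomeGeneralisationBound} is obtained by inverting \eqref{eq:firstEquationOfHighDimGenBound}: setting the right-hand side equal to $p$, taking logarithms, and solving the resulting inequality for $\epsilon$ gives a bound of the form $\epsilon\lesssim\sqrt{(\ln N+\ln(1/p))/m}$ with $\ln N = Wq\big(5+\ln q+4\ln L+\ln(1/\epsilon)+(L+1)(\ln\lceil B\rceil+\ln\max\{q,W\})\big)$, which matches the stated bound after collecting constants. I expect all the delicate points to sit in the covering step—pinning down the constants and the $L+1$ exponent in the parameter-Lipschitz estimate and justifying the product cover over the $q$ outputs—whereas the concentration and the inversion are routine.
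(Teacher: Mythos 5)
Your proposal takes essentially the same route as the paper's own proof: a sup-norm covering of the vector-valued class $\RNN_\ast(D,q,L,W,B)$ with exponent $Wq$ obtained by covering coordinate-wise, transfer of this cover to the squared-loss class via the $2$-Lipschitz property of $t\mapsto(t-a)^2$ on $[0,1]$ summed over the $q$ coordinates (which is exactly what produces the factor $2q/\epsilon$), a concentration-plus-union-bound step over the net, and finally inversion of \eqref{eq:firstEquationOfHighDimGenBound} to obtain \eqref{eq:ourawesomeGeneralisationBound}. The only difference is one of packaging: the paper imports the two technical ingredients from the literature---the covering bound from \cite[Lemma 6.1]{grohs2021proof} and the covering-number-to-generalization step from \cite[Theorem 10.1]{AnthonyBartlett}---whereas you propose to re-derive both (the parameter-perturbation Lipschitz estimate and the Hoeffding/union-bound argument), which is the same argument inlined.
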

\begin{proof}
We first observe that $\RNN_\ast (D, q, L, W, B) \subset \bigotimes_{j = 1}^q\RNN_\ast (D, 1, L, W, B)$ and hence, by \cite[Lemma 6.1]{grohs2021proof}, that $\RNN_\ast (D, q, L, W, B)$ can be covered by 
$$
    \left( \frac{44}{\epsilon} L^4 (\lceil B \rceil \max\{q, W\})^{L+1}\right)^{W q}
$$ 
balls in $L^{\infty}([0,1]^D, [0,1]^q)$ of radius $\epsilon>0$, where the $L^\infty$ norm of a function $f:[0,1]^D \to [0,1]^q$ is defined as 
\begin{align*}
    \|f\|_\infty \coloneqq \sup_{k = 1 ,\dots, q} \|f_k\|_\infty.
\end{align*}
For $g \colon [0,1]^D \to [0,1]^q$, it follows that the function class
\begin{align*}
    \mathcal{H}_{q, g} \coloneqq \left\{ x \mapsto \sum_{i=1}^q |(h(x))_i - (g(x))_i |^2 \colon h \in \RNN_\ast (D, q, L, W, B)\right\}
\end{align*}
can be covered by $( (44/\epsilon) L^4 (\lceil B \rceil \max\{q, W\})^{L+1})^{W q}$ balls in $L^{\infty}([0,1]^D)$ of radius $2q\epsilon$ since the map 
\begin{align*}
    K_q \colon L^{\infty}([0,1]^D, [0,1]^q) &\to L^{\infty}([0,1]^D)\\
    h &\mapsto K_q(h)= \sum_{i=1}^q |h_i - g_i |^2
\end{align*}
satisfies for all $h^{(1)}, h^{(2)} \in L^{\infty}([0,1]^D, [0,1]^q) $ that
\begin{align}
   \left \|K_q(h^{(1)}) - K_q(h^{(2)})\right\|_\infty &= \left\|\sum_{i=1}^q |h^{(1)}_i - g_i |^2 - \sum_{i=1}^q |h^{(2)}_i - g_i |^2 \right\|_\infty \nonumber\\
    &\leq \sum_{i=1}^q \left\| |h^{(1)}_i - g_i |^2 - |h^{(2)}_i - g_i |^2 \right\|_\infty \leq 2 q\left\|h^{(1)} - h^{(2)}\right\|_\infty.\label{eq:LipschitzContinuityOfK-d}
\end{align}
Here \eqref{eq:LipschitzContinuityOfK-d} holds since for $x >y$ and $x,y,b \in [0,1]$
$$
    (x-b)^2 - (y - b)^2 = \int_{y}^x 2(z-b) dz \leq 2 (x-y).
$$
Applying \cite[Theorem 10.1]{AnthonyBartlett} to $\mathcal{H}_{q, g}$ yields \eqref{eq:firstEquationOfHighDimGenBound}.
Finally, to demonstrate \eqref{eq:ourawesomeGeneralisationBound}, observe that $\ln(2\cdot 44) \leq 5$, set 
$$
    \epsilon =  \sqrt{\frac{8+8 W q (5 + \ln(q) + 4\ln(L) + \ln(1/\epsilon)+ (L+1) (\ln \lceil B \rceil + \ln \max\{q, W\}))}{m} + \frac{8 \ln(1/p)}{m}},
$$ 
and apply \eqref{eq:firstEquationOfHighDimGenBound}.
\end{proof}

Next we show that training a NN of a specific size on data of the form $(F(x_i) + \eta_i, x_i)_{i=1}^m$ with $x_i$ drawn according to some distribution on $\mani$ and $\eta_i$ Gaussian generates a NN that has robustness properties similar to the NN of Theorem \ref{thm:LipschitzFunctionOnManifoldBetterEstimate2}. In particular, the robustness to noise is independent from $D$ and a small value of $d$ dampens the noise.

\begin{theorem}\label{thm:noiseStabilityAfterTraining}
Let $r,q, m, D \in \N$, $\mathcal{D}$ be a distribution on $K \subset [0,1]^q$, let $F \colon K \to \mani \subset [0,1]^D$, be such that $\mani$, $F^{-1}$ satisfy the assumptions of Theorem \ref{thm:LipschitzFunctionOnManifoldBetterEstimate2}, let $c, C, d, M$ be as in Theorem \ref{thm:LipschitzFunctionOnManifoldBetterEstimate2}, and let $r > 0$ be as in Remark \ref{rem:boundedweights}. Let $\eta \in \R^D$ be a Gaussian random variable where all components are independent, zero-mean with variance $\sigma^2$. Let $S = (s_i^{(1)}, s_i^{(2)})_{i=1}^m = (F(x_i) + \eta_i, x_i)_{i=1}^m$ be a sample where $x_i \sim \mathcal{D} $ and $\eta_i \sim \eta$ are independent random variables for $i = 1, \dots, m$. For $\kappa \geq C \cdot ( \sqrt{2 d \log(M)} + 2 \log(M) + d ) \cdot \sigma^2$, we choose
\begin{align*}
    W & \sim q M^{d+1} \kappa^{-d/2}, \quad L \sim \left \lceil\log_2\left(d\right)\right \rceil + \left \lceil\log_2\left(D\right)\right \rceil + c, \quad B \sim \kappa^{-r/2},
\end{align*}
to be the values corresponding to $\epsilon = \sqrt{\kappa}$ in Theorem \ref{thm:LipschitzFunctionOnManifoldBetterEstimate2} and we let
\begin{align}
    \Phi_S \in \argmin_{\Phi \in \NN_\ast (D, q, L, W, B)}\left\{ \frac{1}{m} \sum_{i=1}^m \left|\mathrm{R}(\Phi)\left(s_i^{(1)}\right) - s_i^{(2)}\right|^2\right\}.
\end{align}

Then, with probability $1-2p$ over the choice of $S$
\begin{align}\label{eq:differenTypeOfTraining}
    \mathbb{E}_{x\sim \mathcal{D}, \eta} \left|\mathrm{R}\left(\Phi_S\right)(F(x) + \eta) - x\right|^2 \leq \overline{C} \kappa + c' \sqrt{\frac{q\kappa^{-d/2}\ln(1/\kappa)}{m}} + 4De^{-\frac{r_0^2}{2\sigma^2}} + \sqrt{\frac{16\ln(2/p)}{m}},
\end{align}
where $c'>0$ depends on $\mani$ and $\overline{C}>0$ depends only on $C$ (it is in particular independent from $D$, $d$, $q$, and $M$).
\end{theorem}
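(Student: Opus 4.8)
The plan is to run the standard estimation/approximation decomposition of statistical learning theory, using the network $\Phi^\ast := \Phi_\Gamma^{f,\sqrt{\kappa}}$ from Theorem \ref{thm:LipschitzFunctionOnManifoldBetterEstimate2} (instantiated at $\epsilon=\sqrt\kappa$) as the reference comparator. First I would verify that $\Phi^\ast$ is admissible, i.e. $\Phi^\ast\in\NN_\ast(D,q,L,W,B)$: its depth matches $L$, its weight count $\mathcal{O}(qM^{d+1}\kappa^{-d/2})$ matches $W$, its weights are bounded by $\kappa^{-r/2}=B$ by Remark \ref{rem:boundedweights}, and $\mathrm{R}(\Phi^\ast)([0,1]^D)\subset[0,1]^q$, so it lies in the constrained class. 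Writing $L(\Phi)=\mathbb{E}_{x,\eta}|\mathrm{R}(\Phi)(F(x)+\eta)-x|^2$ and $\hat L(\Phi)=\frac1m\sum_i|\mathrm{R}(\Phi)(s_i^{(1)})-s_i^{(2)}|^2$, the minimality $\hat L(\Phi_S)\le\hat L(\Phi^\ast)$ yields
\begin{align*}
L(\Phi_S)\ \le\ \big(L(\Phi_S)-\hat L(\Phi_S)\big)\ +\ \big(\hat L(\Phi^\ast)-L(\Phi^\ast)\big)\ +\ L(\Phi^\ast).
\end{align*}
I would bound the three terms separately and combine the two stochastic ones by a union bound to reach probability $1-2p$.

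Bounding $L(\Phi^\ast)$ is the analytic heart and is where the robustness estimate of Theorem \ref{thm:LipschitzFunctionOnManifoldBetterEstimate2} enters. Since $F$ is injective with $F(x)\in\Gamma$ and $f$ agrees with $F^{-1}$ on $\Gamma$, we have $x=f(F(x))$; adding and subtracting $\mathrm{R}(\Phi^\ast)(F(x))$ and using $(a+b)^2\le 2a^2+2b^2$ gives
\begin{align*}
L(\Phi^\ast)\ \le\ 2\,\mathbb{E}_\eta\sup_{z\in\Gamma}\big|\mathrm{R}(\Phi^\ast)(z+\eta)-\mathrm{R}(\Phi^\ast)(z)\big|^2\ +\ 2\,\mathbb{E}_{x}\big|\mathrm{R}(\Phi^\ast)(F(x))-f(F(x))\big|^2.
\end{align*}
The second term is at most $2\kappa$ by the $L^\infty(\Gamma)$ approximation bound at $\epsilon=\sqrt\kappa$. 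For the first term I would insert \eqref{eq:TheProbabilisticStatement}, use $\mathbb{E}(|\eta|^2)=D\sigma^2$ so that $\tfrac{\hat C}{D}\mathbb{E}(|\eta|^2)=\hat C\sigma^2=8C^2\sigma^2$, and then invoke the standing choice $\kappa\ge C(\sqrt{2d\log M}+2\log M+d)\sigma^2$ to turn $\sigma^2(\sqrt{2d\log M}+2\log M+d)$ into $\kappa/C$. This produces $L(\Phi^\ast)\le\overline C\kappa+4De^{-r_0^2/(2\sigma^2)}$ with $\overline C$ depending only on $C$, matching the first and third contributions of \eqref{eq:differenTypeOfTraining}.

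For the uniform deviation term I would apply Proposition \ref{prop:highDimGenBound}. The one genuine wrinkle, which I expect to be the main (though surmountable) obstacle, is that here the target $s_i^{(2)}=x_i$ is \emph{random} given the input $s_i^{(1)}=F(x_i)+\eta_i$, whereas the proposition is phrased for a deterministic $g$. The clean fix is to augment the input: regard $(u,x)\in[0,1]^{D+q}$ as the sample, set $g(u,x)=x$ (now deterministic), and replace each $\Phi$ by the network ignoring its last $q$ inputs, so that $|\mathrm{R}(\tilde\Phi)(u,x)-g(u,x)|^2=|\mathrm{R}(\Phi)(u)-x|^2$. Crucially, the covering-number bound driving Proposition \ref{prop:highDimGenBound} is independent of the input dimension, so the estimate is unchanged, and \eqref{eq:ourawesomeGeneralisationBound} yields, with probability $1-p$, a uniform bound of the form $c'\sqrt{q\kappa^{-d/2}\ln(1/\kappa)/m}$ (the $M^{d+1}$ and logarithmic factors absorbed into $c'=c'(\Gamma)$, the $\ln(1/\kappa)$ coming from the covering-radius term). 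A secondary technical point is that the noise may push $F(x)+\eta$ slightly outside $[0,1]^D$; this is handled exactly as in Theorem \ref{thm:LipschitzFunctionOnManifoldBetterEstimate2}, by restricting to the high-probability event $\{|\eta_j|\le r_0\ \forall j\}$ (whose complement already produces the $De^{-r_0^2/(2\sigma^2)}$ term), the covering argument being insensitive to the precise domain.

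Finally, since $\Phi^\ast$ is a single fixed network and the per-sample loss is bounded, the fixed-network gap $\hat L(\Phi^\ast)-L(\Phi^\ast)$ is controlled by Hoeffding's inequality, contributing the term $\sqrt{16\ln(2/p)/m}$ with probability $1-p$. A union bound over the two stochastic events gives the overall probability $1-2p$, and summing the three contributions reproduces \eqref{eq:differenTypeOfTraining}.
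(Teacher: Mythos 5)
Your proposal is correct and follows essentially the same route as the paper's proof: admissibility of the comparator $\Phi_\Gamma^{f,\sqrt{\kappa}}$ in $\NN_\ast(D,q,L,W,B)$, the ERM/generalization decomposition via Proposition \ref{prop:highDimGenBound}, the split of the comparator's loss into a noise-robustness term (bounded by \eqref{eq:TheProbabilisticStatement} combined with the standing assumption $\kappa \geq C(\sqrt{2d\log M}+2\log M+d)\sigma^2$) and the $\kappa$-approximation term, Hoeffding's inequality for the single fixed comparator, and a union bound yielding probability $1-2p$. The only differences are bookkeeping — the paper splits the comparator's \emph{empirical} loss and applies Hoeffding only to its empirical robustness part, whereas you split its \emph{population} loss and apply Hoeffding to the full comparator loss — together with your explicit patches (the input-augmentation device for the random labels, and the restriction to the small-noise event for inputs leaving $[0,1]^D$) for two technical points that the paper's own invocation of Proposition \ref{prop:highDimGenBound} silently glosses over.
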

\begin{proof}
First, we note that for $m \in \N$ and $p \in (0,1/2)$ by Proposition \ref{prop:highDimGenBound} with probability at least $1-p$ over the draw of $S = (F(x_i) + \eta_i, x_i)_{i=1}^m$ it holds that 
\begin{align}
     &\mathbb{E}_{x\sim \mathcal{D}, \eta} \left|\mathrm{R}\left(\Phi_S\right)(F(x) + \eta) - x\right|^2 \nonumber\\
     &\leq \frac{1}{m}\sum_{i=1}^m\left|\mathrm{R}\left(\Phi_S\right)(F(x_i) + \eta_i) - x_i\right|^2\nonumber\\
    &\qquad +   \sqrt{\frac{8+8 W q (5 + \ln(q) + 4\ln(L) + \ln(1/\epsilon)+ (L+1) (\ln \lceil B \rceil + \ln \max\{q, W\}))}{m} + \frac{8 \ln(1/p)}{m}}. \label{eq:theFirstEstimateInTheProof}
\end{align}
Next, we note that per definition of $\Phi_S$
\begin{align}
    \frac{1}{m}\sum_{i=1}^m|\mathrm{R}(\Phi_S)(F(x_i) + \eta_i) - x_i|^2 &=  \inf_{\Phi \in \NN_\ast (D, q, L, W, B)}\frac{1}{m}\sum_{i=1}^m|\mathrm{R}(\Phi)(F(x_i) + \eta_i) - x_i|^2\nonumber\\
    &\leq \inf_{\Phi \in \NN_\ast (D, q, L, W, B)} \left( \frac{2}{m}\sum_{i=1}^m|\mathrm{R}(\Phi)(F(x_i) + \eta_i) - \mathrm{R}(\Phi)(F(x_i))|^2 \right.\nonumber\\
    &\qquad+ \left.\frac{2}{m}\sum_{i=1}^m|\mathrm{R}(\Phi)(F(x_i)) - x_i|^2\right). \label{eq:WellIfEverythingHasANumberWhyNotThisEquation}
\end{align}
By Theorem \ref{thm:LipschitzFunctionOnManifoldBetterEstimate2}, it holds that $\Phi^{f,\sqrt{\kappa}}_{\mani} \in \NN_\ast (D, q, L, W, B)$ and hence
\begin{align}
    \frac{2}{m}\sum_{i=1}^m\left|\mathrm{R}\left(\Phi^{f,\sqrt{\kappa}}_{\mani}\right)(F(x_i)) - x_i\right|^2 \leq 2\kappa. \label{eq:ThisTooCouldBeVeryUsefulLater}
\end{align}
Next, we denote for $\Phi \in \NN_\ast (D, q, L, W, B)$
\begin{align*}
     e_S(\Phi) &\coloneqq \frac{1}{m}\sum_{i=1}^m|\mathrm{R}(\Phi)(F(x_i) + \eta_i) - \mathrm{R}(\Phi)(F(x_i))|^2,\\
     e_\infty(\Phi) &\coloneqq \mathbb{E}_{x \sim \mathcal{D}, \eta}|\mathrm{R}(\Phi)(F(x) + \eta) - \mathrm{R}(\Phi)(F(x))|^2.
\end{align*}
By Hoeffding's inequality, we have that 
\begin{align}
    \mathbb{P}\left( e_S(\Phi) - e_\infty(\Phi) > \sqrt{\ln(2/p)/m}\right) \leq p.
\end{align}
We conclude that with probability at least $1-p$ by \eqref{eq:WellIfEverythingHasANumberWhyNotThisEquation}
\begin{align}
   \frac{1}{m}\sum_{i=1}^m|\mathrm{R}(\Phi_S)(F(x_i) + \eta_i) - x_i|^2 &\leq 2 e_\infty\left(\Phi^{f,\sqrt{\kappa}}_{\mani}\right) + 2\kappa + 2\sqrt{\frac{\ln(2/p)}{m}}. \label{eq:IBetThisWillBeUsefulLater}
\end{align}
By Theorem \ref{thm:LipschitzFunctionOnManifoldBetterEstimate2}, we can estimate 
\begin{align}
e_\infty\left(\Phi^{f,\sqrt{\kappa}}_{\mani}\right) &\leq \dfrac{ \hat{C} }{D} \mathbb{E}(|\eta|^2) \cdot \left ( \sqrt{2 d \log(M)} + 2 \log(M) + d  \right)\! +\! 8 \kappa \! + \! 2De^{-\frac{r_0^2}{2\sigma^2}}\nonumber\\
&\leq C' \kappa + 2De^{-\frac{r_0^2}{2\sigma^2}}, \label{eq:weMightNeedThis}
\end{align}
for a constant $C' = \hat{C} + 8 >0$. 

Applying \eqref{eq:weMightNeedThis} to \eqref{eq:IBetThisWillBeUsefulLater} yields that with probability at least $1-p$
\begin{align}
    \frac{1}{m}\sum_{i=1}^m|\mathrm{R}(\Phi_S)(F(x_i) + \eta_i) - x_i|^2 \leq C''\kappa + 4De^{-\frac{r_0^2}{2\sigma^2}} + 2\sqrt{\frac{\ln(2/p)}{m}}. \label{eq:almostDoneHere}
\end{align}
Combining \eqref{eq:almostDoneHere} with \eqref{eq:theFirstEstimateInTheProof} and noting that since both estimates hold with probability at least $1-p$ the overall estimate holds with probability at least $1-2p$ completes the proof.
\end{proof}

\section{Numerical experiments} \label{sec:NumExps}
For all inverse problems described in Section \ref{sec:admissibleInverseProblems}, we present numerical examples illustrating how robust approximations to the inverse operators can be learned by neural networks. We will proceed similarly for all cases: 
First, given the forward operator $F :\mathcal{X} \to \mathcal{Y}$ between Banach spaces, a basis $B$ that defines a finite-dimensional space $W$ is chosen and the domain of $F$ will be restricted to this subspace. 
Second, as it was shown in Section \ref{sec:admissibleInverseProblems} for the list of admissible inverse problems, given a compact and convex subset $K$ of $W$ and a sequence of finite-rank operators $(\widehat{Q}_N)_{N \in \mathbb{N}}$ where $\widehat{Q}_N: \mathcal{Y} \to Y_N$ and $Y_N$ are linear subspaces of $Y$, there is a $D \in \mathbb{N}$ such that
$$
    \widehat{F} \coloneqq \widehat{Q}_D \circ F \vert_K 
$$
has a Lipschitz continuous inverse. Third, we identify the space $Y_D$ with a Euclidean space $\R^D$ via an isomorphism $T_D: Y_D \to \R^D$ and $W$ with $\R^d$ through the isomorphism $P: \R^d \to W$. Then, the function
$$
    \widetilde{F} \coloneqq T_D \circ \widehat{F} \circ P \vert_{P^{-1}(\lGamma)}
$$
is Lipschitz continuous. Next, $\widetilde{F}^{-1}: \R^D \supset \mathcal{M} \to \R^d$, where $\mathcal{M} \coloneqq \widetilde{F}(P^{-1}(\lGamma))$ and $\lGamma \subset K$, will be approximated by realizations of NNs for different values of $D$ since in practice, we do not know the exact value of $D$. 
In addition, we will suppose that we know the operator $ T_D \circ \widehat{Q}_D$ for every $D \in \mathbb{N}$ and we will denote it as $Q_D$. For the training of the NN, and for fixed ambient dimension $D$, intrinsic dimension $d$, and noise level $\delta$, a data set of $40.000$ noisy data points $(y_i, x_i)_{i=1}^{40.000}$ is generated. 
To analyze the practical visibility of the bounds identified in Theorems \ref{thm:LipschitzFunctionOnManifoldBetterEstimate2} and \ref{thm:noiseStabilityAfterTraining} every component is perturbed with normally-distributed random noise $\eta = (\eta_i)_{i=1}^{40.000}$. Then $(y_i, x_i)_{i=1}^{40.000} = (\widetilde{F}(x_i) + \eta_i, x_i)_{i=1}^{40.000}$. 
This training set was identified in Theorem \ref{thm:noiseStabilityAfterTraining} as being the right one to generate robust-to-noise NNs. 
When the training stage is completed, each NN is evaluated on a test set of $8.000$ randomly generated data points $(y_i, x_i)_{i=1}^{8.000}$ with the same distribution as the training data. 

The predictions $\tilde{x}_i$ returned by the NN for each $y_i$ will be compared with the elements $x_i$. For each inverse problem and fixed $D$, $d$, and $\delta$, the training and testing process are performed 10 times and the mean squared test errors are presented. We recall that the mean squared test error is 
\begin{equation*}
    MSE = \dfrac{1}{n}\displaystyle \sum_{i=1}^{n} (x_i - \tilde{x}_i )^2.
\end{equation*}
Notice that the expected squared norm of the added noise, scales linearly in $D$ and linear in $\mathbb{E}(|\eta |^2)$. We observe that the parameter $D$ does not negatively affect the test error, even though the norm of the noise grows. This mirrors the prediction of Theorem \ref{thm:LipschitzFunctionOnManifoldBetterEstimate2}, especially Equation \eqref{eq:TheProbabilisticStatement}. Moreover, the behavior of the error in all examples also reflects the observation in Remark \ref{rem:RotationInvariant}.

\subsection{Identification of the transmissivity coefficient}

In this subsection, two examples will be presented. They differ only in the ambient dimension $D$.
Given the forward operator $F: C([0,1]) \supset H_{\lambda} \to C([0,1])$ of \eqref{eq:transmissionForwardProblem}, we set $f= 1$, $c_0,c_1 = 0$. The domain of $F$ will be restricted to the subspace $W_0$ of $C([0,1])$ given as the span of the four inner Lagrangian finite elements $\phi_1, \dots, \phi_4$ on the grid $\{0, 1/6,2/6,3/6,4/6,5/6,1\}$  and the constant 1. For $W \coloneqq \spann\{\phi_1, \dots, \phi_4\}$, $\lambda \coloneqq 0.1$, let $P_{W, \lambda}$ be defined as in Subsection \ref{sec:transmissivityIdentification}. Additionally, $(a,b) = (0,1)$ is chosen. Finally, let $Q_D$ be the sampling operator that samples continuous functions on $D$ equidistant points in $[0,1]$. The inverse of the function 
\begin{align*}
\widetilde{F} \coloneqq Q_D \circ F \circ P_{W, \lambda}: (0,1)^4 \to \R^D
\end{align*}
will be approximated by the realization of a four-layer NN with $100$ neurons per layer on $40.000$ perturbed data points $(y_i,x_i)_{i=1}^{40.000}$. Notice that $\mathcal{M} = \widetilde{F}((0,1)^4)$ is a $(M,\delta)-$covered submanifold and Theorem \ref{thm:LipschitzFunctionOnManifoldBetterEstimate2} can be applied.
The mean squared error for different values of $D$ and noise level $\delta$ is displayed in Figure \ref{fig:Problem1} and the values are collected in Table \ref{tab:Tran1}.

\begin{figure}[htb]
    \centering
    \includegraphics[width = 0.95\textwidth]{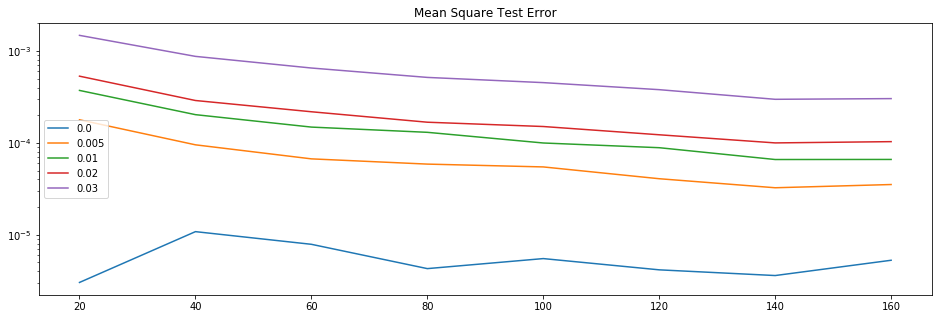}
    \caption{Mean squared errors of the approximation of the solution operator for the transmissivity identification problem on the test set for five different noise levels: $0.00, 0.005, 0.01, 0.02, 0.03$. The $x$-axis shows the ambient dimension which corresponds to the number of measurements $D$, the $y$-axis records the mean squared error of the reconstruction over the test set.}
    \label{fig:Problem1}
\end{figure}

\begin{table}[h]
\begin{center}
\scalebox{0.9}{
\begin{tabular}{ c c c c c c c c c }
Run id & D=20 & D=40 & D=60 & D=80 & D=100 & D=120 & D=140 & D=160 \\
\hline
$\delta = 0.0 $& $ 3.02 \cdot 10^{-5}$ & $1.08\cdot 10^{-5}$ & $7.86 \cdot 10^{-6}$& $4.27 \cdot 10^{-6}$ & $5.49 \cdot 10^{-6}$ & $4.14 \cdot 10^{-6}$ & $3.59\cdot 10^{-6}$ & $5.27 \cdot 10^{-6}    $  \\
\hline
$\delta = 0.005 $ & $1.79 \cdot 10^{-4 }$ & $9.53 \cdot 10^{-5} $ & $6.70 \cdot 10^{-5}$ & $5.88 \cdot 10^{-5}$ & $5.48 \cdot 10^{-5}$ & $4.07 \cdot 10^{-5}$ & $3.25\cdot 10^{-5}$ & $3.52\cdot 10^{-5} $   \\
\hline
$\delta = 0.01 $ & $3.73 \cdot 10^{-4}$ & $2.03 \cdot 10^{-4}$ & $1.49 \cdot 10^{-4}$ & $1.30\cdot 10^{-4}$ & $9.98 \cdot 10^{-5}$ & $8.87 \cdot 10^{-5}$ & $6.59 \cdot 10^{-5}$ & $6.60 \cdot 10^{-5}$\\
\hline
$\delta = 0.02 $ & $7.20 \cdot 10^{-4}$ & $2.90 \cdot 10^{-4}$ & $2.18 \cdot 10^{-4}$ & $1.68 \cdot 10^{-4}$ & $1.51 \cdot 10^{-4}$ & $1.23 \cdot 10^{-4}$ & $1.00 \cdot 10^{-4}$ & $1.03 \cdot 10^{-4}$  \\
\hline
$\delta = 0.03 $ & $1.48 \cdot 10^{-3 }$ & $8.77 \cdot 10^{-4}$ & $6.55 \cdot 10^{-4}$ & $ 5.17 \cdot 10^{-4}$ & $4.54 \cdot 10^{-4}$ & $3.80 \cdot 10^{-4}$ & $2.99 \cdot 10^{-4}$ & $3.00 \cdot 10^{-4}$ \\
\hline

\end{tabular}
}
\end{center}
\caption{Mean squared error of the approximation of the solution operator for the transmissivity identification problem. The intrinsic dimension is $d=4$. }
\label{tab:Tran1}
\end{table}

Next, a second example is discussed where the intrinsic dimension will be altered while the ambient dimension will remain unchanged. Under the same initial condition $c_0=c_1=0$, $f=1$, the operators $P_{W, \lambda}$ and $Q_D$ as previously stated with fixed $D=100$ and the forward operator \eqref{eq:transmissionForwardProblem}, the discretized operator
\begin{align*}
\widetilde{F} \coloneqq Q_{100} \circ F \circ P_{W, \lambda}: (0,1)^d \to \R^{100},
\end{align*}
will be approximated by the realization of a NN for different noise levels and the values $d=4,8,12,16,20$. In this case, the  $d$-dimensional space $W$ is generated by the inner Lagrangian finite elements $(\phi_i)_{i=1}^d$ on the mesh $\{x_0, x_1, \dots , x_{d+1}\}$ where $x_i = {i}/{(d+1)}$ and $i \in \{0, \dots, d+1\}$ for $d=4,8,12,16,20$. In Figure \ref{fig:Problem2}, the mean squared error is shown and the errors are collected in Table \ref{tab:Tra2}. As stated in Equation \eqref{eq:TheProbabilisticStatement}, the error grows as the intrinsic dimension increases. 

\begin{figure}[htb]
    \centering
    \includegraphics[width = 0.95\textwidth]{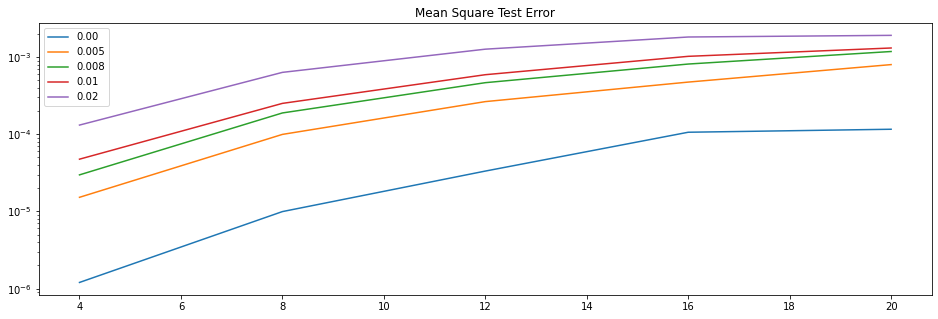}
    \caption{Mean squared errors of the approximation of the solution operator for the transmissivity identification problem on the test set for five different noise levels: $0.00, 0.005, 0.008, 0.01, 0.02$. The $x$-axis shows the intrinsic dimension $d$, the $y$-axis records the mean squared errors of the reconstruction over the test set.}
    \label{fig:Problem2}
\end{figure}

\begin{table}[h]
\begin{center}
\begin{tabular}{ c c c c c c }
Run id & $d=4$ & $d=8$ & $d=12$ & $d=16$ & $d=20$  \\
\hline
$\delta = 0.00 $& $1.20\cdot 10^{-6}$ & $9.93  \cdot 10^{-6}$ & $3.33 \cdot 10^{-5}$ & $1.06  \cdot 10^{-4}$ & $ 1.16\cdot 10^{-4}$  \\
\hline
$\delta = 0.005 $ & $1.52  \cdot 10^{-5}$ & $9.94  \cdot 10^{-5}$ & $ 2.64 \cdot 10^{-4}$ & $4.73  \cdot 10^{-4}$& $ 7.97 \cdot 10^{-4}$\\
\hline
$\delta = 0.008 $ & $ 2.97 \cdot 10^{-5}$& $1.88  \cdot 10^{-4}$& $4.65  \cdot 10^{-4}$ & $ 8.11 \cdot 10^{-4}$& $  1.18\cdot 10^{-2}$\\
\hline
$\delta = 0.01 $ & $4.75  \cdot 10^{-5}$ & $ 2.51 \cdot 10^{-4}$ & $5.90  \cdot 10^{-4}$ & $1.02  \cdot 10^{-3}$ & $ 1.30 \cdot 10^{-3}$ \\
\hline
$\delta = 0.02$ & $1.31  \cdot 10^{-4}$ & $6.33  \cdot 10^{-4}$ & $1.26  \cdot 10^{-3}$ & $1.81  \cdot 10^{-3}$ & $ 1.91 \cdot 10^{-3}$\\
\hline
\end{tabular}
\end{center}
\caption{Mean squared error of the approximation of the solution operator for the transmissivity identification problem. The ambient dimension is $D=100$ and the intrinsic dimension varies.
}
\label{tab:Tra2}
\end{table}

\subsection{Identification of the coefficient in the Euler-Bernoulli equation}

In this subsection, an example of approximation of the inverse operator to the forward operator $F$ of \eqref{eq:EulerBernoulli} with $c_0,c_1,c_2,c_3=0$ and $f=1$ will be discussed. We assume that the coefficient $a$ belongs to the finite-dimensional space $W$ defined as the span of the functions $(a_k)_{k=1}^4$, where
$$
    a_k(x) =  \dfrac{\cos \left( 2 \pi k x \right)}{10}+0.11 \text{ for } x\in [0,1].
$$
Now, we define $P_{W}$ as in Subsection \ref{sec:transmissivityIdentification},  $(a,b)=(0,1)$ and the sampling operator $Q_D$ that samples continuous functions on $D$ equidistant points in $[0,1]$.  Then, the inverse of the function
$$
\widetilde{F} \coloneqq Q_D \circ F \circ P_W : (0,1)^4 \to \R^D
$$
is approximated by the realization of a NN. Notice that $\mathcal{M} = \widetilde{F}((0,1)^4)$ is a $(M,\delta)-$covered submanifold and Theorem \ref{thm:LipschitzFunctionOnManifoldBetterEstimate2} can be applied. Again, a four-layer NN with 100 neurons per layer is trained on a set of 40.000 perturbed data points $(y_i, x_i)_{i=1}^{40.000}$. In Figure \ref{fig:EB}, we present the mean squared error for various values of $D$ and noise levels $\delta$. In Table \ref{tab:EB}, the values of the errors are collected. 

\begin{figure}[htb]
    \centering
    \includegraphics[width = 0.95\textwidth]{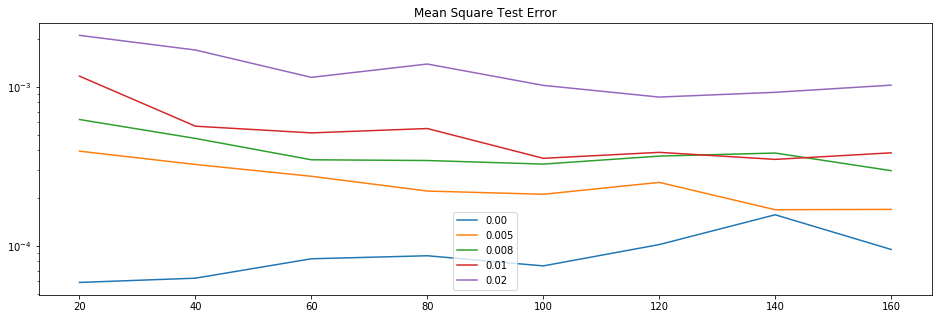}
    \caption{Mean squared errors of the approximation to the inverse problem of the Euler-Bernoulli equation on the test set for five different noise levels: $0.00, 0.005, 0.008, 0.01, 0.02$. The $x$-axis shows the ambient dimension which corresponds to the number of measurements $D$, the $y$-axis records the mean squared error of the reconstruction over the test set.}
    \label{fig:EB}
\end{figure}

\begin{table}[h]
\begin{center}
\scalebox{0.9}{
\begin{tabular}{ c c c c c c c c c }
Run id & D=20 & D=40 & D=60, & D=80 & D=100 & D=120 & D=140 & D=160 \\
\hline
$\delta = 0.00$& $5.90 \cdot 10^{-5 }$ & $6.28 \cdot 10^{-5 }$ &  $8.31 \cdot 10^{-5 }$ &  $8.69 \cdot 10^{-5}$ &  $7.50 \cdot 10^{-5 }$ &  $ 1.02\cdot 10^{-4 }$ &  $1.57 \cdot 10^{-4}$ &  $9.52 \cdot 10^{-5 }$ \\
\hline
$\delta = 0.005 $&  $3.95 \cdot 10^{-4}$ &  $3.25 \cdot 10^{-4}$ &  $2.74 \cdot 10^{-4 }$ &  $2.22 \cdot 10^{-4 }$ &  $2.11 \cdot 10^{-4 }$ &  $2.51 \cdot 10^{-4}$ &  $ 1.69 \cdot 10^{-4}$ &  $1.70 \cdot 10^{-4 }$ \\
\hline
$\delta = 0.008 $&  $6.24 \cdot 10^{-4 }$ & $4.75 \cdot 10^{-4 }$ & $3.48 \cdot 10^{-4 }$ & $3.45 \cdot 10^{-4 }$ & $3.27\cdot 10^{-4 }$ & $3.67 \cdot 10^{-4 }$ & $3.84 \cdot 10^{-4 }$ & $2.98 \cdot 10^{-4 }$ \\
\hline
$\delta = 0.01 $& $1.17 \cdot 10^{-3}$ & $5.66 \cdot 10^{-4 }$ & $ 5.14\cdot 10^{-4 }$ & $ 5.48\cdot 10^{-4 }$ & $3.56 \cdot 10^{-4 }$  & $3.88 \cdot 10^{-4 }$ & $3.50 \cdot 10^{-4 }$ & $3.85 \cdot 10^{-4 }$ \\
\hline
$\delta = 0.02 $ & $2.11 \cdot 10^{-3}$ & $1.71 \cdot 10^{-3}$ & $ 1.15\cdot 10^{-2}$ & $1.39 \cdot 10^{-3}$ & $1.02 \cdot 10^{-3 }$ & $ 8.63\cdot 10^{-4 }$ & $9.25 \cdot 10^{-4 }$ & $1.03 \cdot 10^{-3 }$ \\
\hline

\end{tabular}
}
\end{center}
\caption{Mean squared errors for the inverse problem of the Euler-Bernoulli equation. Here, $d=4$.}
\label{tab:EB}
\end{table}

\subsection{Solution of a non-linear Volterra-Hammerstein integral equation}

For the approximation of the inverse of the non-linear operator \eqref{eq:VolterraHammerstein}, the unknown function $u$ is assumed to belong to the finite-dimensional vector space $W$ generated by the basis functions $(u_k)_{k=1}^d$ such that
$$
    u_{k}(x) = \frac{1}{4}\cos \left( 2 \pi k \cdot \left(x+ \dfrac{1}{4} \right)\right)+1, \text{ for } x \in [0,1].
$$
Once again, we set $(a,b) = (0,1)$, $P_{W,  \lambda}$ as in Section \ref{sec:Volterra} along with $Q_D$ the sampling operator on $D$ equidistant points in $[0,1]$. Then, the inverse of the operator
$$
    \widetilde{F} \coloneqq Q_D \circ F \circ P_{W.\lambda} : (0,1)^d \to \R^D
$$
will be approximated by the realization of a four-layer NN with 100 neurons per layer. Notice that $\mathcal{M} = \widetilde{F}((0,1)^d)$ is a $(M,\delta)-$covered submanifold and Theorem \ref{thm:LipschitzFunctionOnManifoldBetterEstimate2} can be applied. For different values of $\delta$ and $D$ and for $d=4$, the mean squared errors are visualized in Figure \ref{fig:Volt3}. In addition, in Figure \ref{fig:Volt4} the mean squared errors for $d=8$ are depicted. As previously stated, the increase in the intrinsic dimension $d$ yields a substantial increment in the error.

\begin{figure}[htb]
    \centering
    \includegraphics[width = 0.95\textwidth]{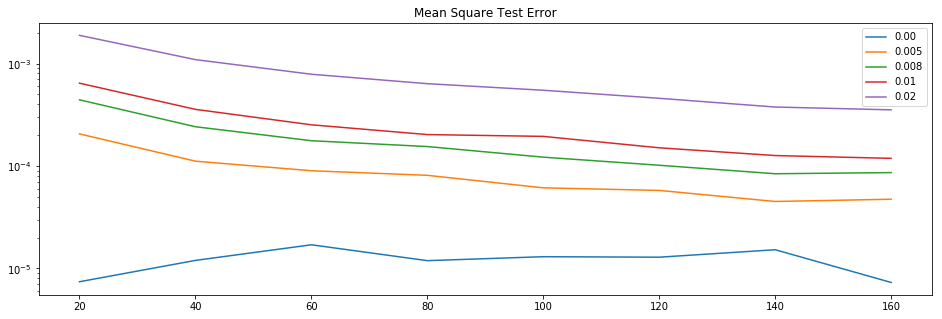}
    \caption{Mean squared error of the approximation to the inverse problem of the Volterra-Hammerstein integral equation on the test set for five different noise levels:  $0.00, 0.005, 0.008, 0.01, 0.02$ and intrinsic dimension $d=4$. The $x$-axis shows the ambient dimension which corresponds to the number of measurements $D$, the $y$-axis records the mean squared error of the reconstruction over the test set.}
    \label{fig:Volt3}
\end{figure}

\begin{figure}[htb]
    \centering
    \includegraphics[width = 0.95\textwidth]{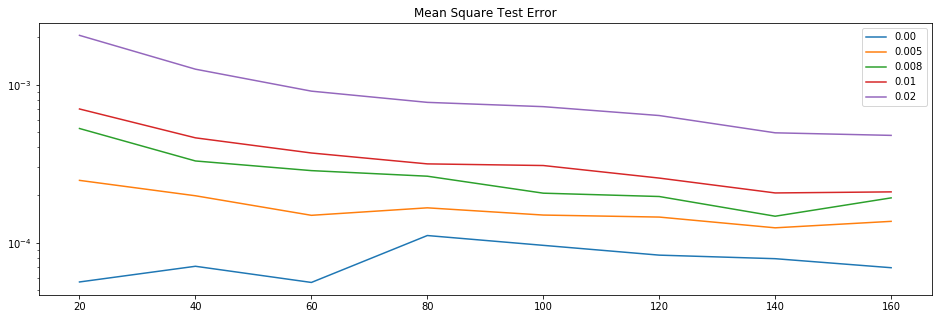}
    \caption{Mean squared errors of the approximation to the inverse problem of the Volterra-Hammerstein integral equation on the test set for five different noise levels: $0.00, 0.005, 0.008, 0.01, 0.02$ and intrinsic dimension $d=8$. The $x$-axis shows the ambient dimension which corresponds to the number of measurements $D$, the $y$-axis records the mean squared error of the reconstruction over the test set.}
    \label{fig:Volt4}
\end{figure}

\begin{table}[h]
\begin{center}
\scalebox{0.9}{
\begin{tabular}{ c c c c c c c c c }
Run id & D=20 & D=40 & D=60, & D=80 & D=100 & D=120 & D=140 & D=160 \\
\hline
$\delta = 0.00 $& $5.46 \cdot 10^{-6}$& $4.39 \cdot 10^{-6}$& $4.72\cdot 10^{-6}$& $5.85\cdot 10^{-6}$& $5.80\cdot 10^{-6}$& $4.69\cdot 10^{-6}$& $5.72 \cdot 10^{-6}$& $6.73 \cdot 10^{-6}$\\
\hline
$\delta = 0.005$ & $2.23\cdot 10^{-4}$& $1.05\cdot 10^{-4}$& $7.88\cdot 10^{-5}$& $7.52\cdot 10^{-5}$& $5.50\cdot 10^{-5}$& $4.50\cdot 10^{-5}$& $3.64\cdot 10^{-5}$& $3.65\cdot 10^{-5}$\\
\hline
$\delta = 0.008$& $6.14 \cdot 10^{-4}$& $3.35 \cdot 10^{-4}$ & $2.51\cdot 10^{-4}$& $1.93\cdot 10^{-4}$& $1.88\cdot 10^{-4}$& $1.37 \cdot 10^{-4}$& $1.11 \cdot 10^{-4}$& $1.10 \cdot 10^{-4}$\\
\hline
$\delta = 0.01$& $1.19 \cdot 10^{-3}$& $6.63\cdot 10^{-4}$& $4.85\cdot 10^{-4}$& $3.77\cdot 10^{-4}$& $3.30 \cdot 10^{-4}$& $2.74 \cdot 10^{-4}$& $2.20\cdot 10^{-4}$& $2.13\cdot 10^{-4}$\\
\hline
$\delta = 0.02$ & $1.86\cdot 10^{-3}$& $1.06\cdot 10^{-3}$& $7.74\cdot 10^{-4}$& $6.23\cdot 10^{-4}$& $5.13 \cdot 10^{-4}$& $4.35\cdot 10^{-4}$& $3.50\cdot 10^{-4}$& $3.38\cdot 10^{-4}$\\
\hline
\end{tabular}
}
\end{center}
\caption{Mean squared error for the inverse problem of the Volterra-Hammerstein equation for $d=4$. }
\label{tab:energy1}
\end{table}

\begin{table}[h]
\begin{center}
\scalebox{0.9}{
\begin{tabular}{ c c c c c c c c c }
Run id & D=20 & D=40 & D=60, & D=80 & D=100 & D=120 & D=140 & D=160 \\
\hline
$\delta = 0.00 $& $5.62 \cdot 10^{-5}$& $7.07 \cdot 10^{-5}$ & $5.58 \cdot 10^{-5}$ & $1.11 \cdot 10^{-5}$ & $9.60 \cdot 10^{-5}$ & $8.32 \cdot 10^{-5}$ & $ 7.90\cdot 10^{-5}$ & $ 6.93 \cdot 10^{-5}$\\
\hline
$\delta = 0.005$ & $2.48 \cdot 10^{-4}$ & $1.98 \cdot 10^{-4}$ & $1.49 \cdot 10^{-4}$& $1.66 \cdot 10^{-4}$& $1.49 \cdot 10^{-4}$& $1.45 \cdot 10^{-4}$& $1.23 \cdot 10^{-4}$& $1.36 \cdot 10^{-4}$\\
\hline
$\delta = 0.008$ & $5.28 \cdot 10^{-5}$ & $3.29 \cdot 10^{-4}$& $2.85 \cdot 10^{-4}$& $2.63 \cdot 10^{-4}$& $ 2.05\cdot 10^{-4}$& $1.96 \cdot 10^{-4}$& $1.47\cdot 10^{-4}$& $1.92 \cdot 10^{-4}$\\
\hline
$\delta = 0.01$ & $ 7.01 \cdot 10^{-4}$ & $4.60 \cdot 10^{-4}$& $3.69 \cdot 10^{-4}$& $3.15 \cdot 10^{-4}$& $3.07 \cdot 10^{-4}$& $ 2.56\cdot 10^{-4}$& $ 2.06 \cdot 10^{-4}$& $2.09 \cdot 10^{-4}$\\
\hline
$\delta = 0.02$ & $2.05 \cdot 10^{-3}$ & $ 1.25 \cdot 10^{-3}$ & $9.10 \cdot 10^{-4}$ & $ 7.72\cdot 10^{-4}$ &$ 7.25 \cdot 10^{-4}$ & $6.37 \cdot 10^{-4}$ & $4.95 \cdot 10^{-4}$ & $4.77 \cdot 10^{-4}$\\
\hline
\end{tabular}
}
\end{center}
\caption{Mean squared errors for the inverse problem of the Volterra-Hammerstein equation for $d=8$. }
\label{tab:Energy2}
\end{table}

\subsection{Solution of the linear inverse gravimetric problem}
\label{sec:NumGravi}
For the solution of the linear inverse gravimetric problem with forward operator $F$ given by \eqref{eq:gravimetric}, the density $\rho$ is assumed to belong to the four-dimensional subspace of $L^2([-1,1]^2)$ generated by the indicator functions:
\begin{align*}
\phi_1(x) &= \mathbbm{1}_{[-0.5,0]\times [0, 0.5]}(x)\\
\phi_2(x) & = \mathbbm{1}_{[0,0.5]\times [0, 0.5]}(x)\\
\phi_3(x)& = \mathbbm{1}_{[-0.5,0]\times [-0.5, 0]}(x)\\
\phi_4(x)& = \mathbbm{1}_{[0, 0.5]\times [-0.5, 0]}(x)
\end{align*}
where $x \in [-1,1]^2$. Again, let $P_{W}: (0,1)^4 \to L^2([-1,1]^2)$ be the operator given by $P_W(\alpha)  = \sum_{k=1}^4 \alpha_k \phi_k$ with $\alpha=(\alpha_k)_{k=1}^4 \in \R^4$. To determine $y_i$, we measure the values of $F(\rho)$ at the following points on the boundary of $[-1, 1]^2$: the first sample is taken at the point $(-1,1) \in [-1,1]^2$ and the successive samples are taken counterclockwise at points with distance $1/D$. We can see that the number of samples is $4D$. Again, a NN is trained to approximate the inverse of the operator
$$
\widetilde{F} \coloneqq Q_{4D} \circ F \circ P_W : (0,1)^4 \to \R^{4D}.
$$
Figure \ref{fig:grav} depicts the mean squared errors for the solution of the problem. The errors are also collected in Table \ref{tab:grav}.

\begin{figure}[ht]
    \centering
    \includegraphics[width = 0.95\textwidth]{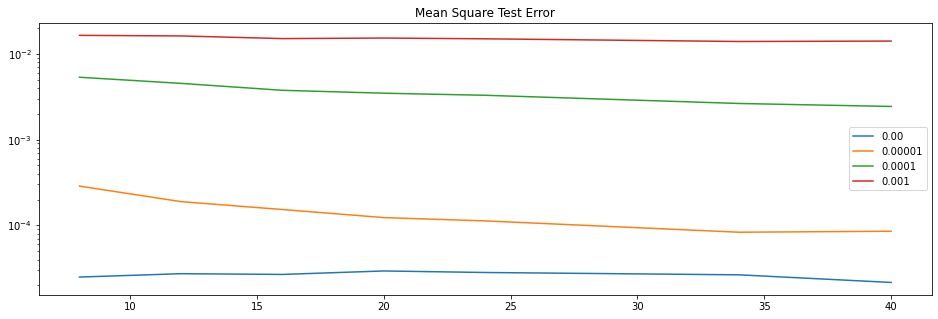}
    \caption{Mean squared errors of the approximation to the inverse problem of the gravimetric problem on the test set for four different noise levels: $0.00, 0.00001, 0.0001, 0.001$ and intrinsic dimension $d=4$. The $x$-axis the value $D$ such that the ambient dimension is $4D$, the $y$-axis records the mean squared errors of the reconstruction over the test set.
    }
    \label{fig:grav}
\end{figure}
\begin{table}[h]
\begin{center}
\begin{tabular}{ c c c c c c c c }
Run id & D=8 & D=12 & D=16, & D=20 & D=24 & D=36 & D=40 \\
\hline
$\delta = 0.00 $& $2.50 \cdot 10^{-5 }$ &  $2.74 \cdot 10^{-5 }$ & $2.68 \cdot 10^{-5 }$ & $ 2.94\cdot 10^{-5}$ & $2.83 \cdot 10^{-5}$ & $2.66 \cdot 10^{-5 }$ & $2.16 \cdot 10^{-5 }$  \\
\hline
$\delta = 10^{-5}$ & $2.87 \cdot 10^{-4 }$ & $1.89 \cdot 10^{-4 }$ & $1.53 \cdot 10^{-4 }$ & $1.23 \cdot 10^{-4 }$ & $1.13 \cdot 10^{-4 }$ & $8.33 \cdot 10^{-5 }$ & $8.22 \cdot 10^{-5 }$ \\
\hline
$\delta = 10^{-4}$  & $5.31 \cdot 10^{- 3}$ & $ 4.5\cdot 10^{-3 }$ & $3.77 \cdot 10^{-3 }$ & $3.46 \cdot 10^{-3}$ & $3.28 \cdot 10^{-3 }$ & $2.63 \cdot 10^{-3 }$ & $2.43 \cdot 10^{-3}$ \\
\hline
$\delta = 10^{-3}$ & $ 1.64 \cdot 10^{-2 }$ & $1.61 \cdot 10^{-2 }$ & $1.50 \cdot 10^{-2 }$ & $1.52 \cdot 10^{-2 }$ & $1.49 \cdot 10^{- 2}$ & $1.39 \cdot 10^{-2}$ & $1.40 \cdot 10^{-2}$ \\
\hline

\end{tabular}
\end{center}
\caption{Mean squared error for the inverse problem of the gravimetric problem. Here, $d=4$. }
\label{tab:grav}
\end{table}

\section*{Acknowledgements}

The authors would like to thank Annalisa Buffa and Pablo Antolin for very inspiring discussions, helpful suggestions, as well as, for introducing the idea of discretizing infinite-dimensional inverse problems to yield finite-dimensional Lipschitz functions which are amenable to treatment by neural networks to the authors.

\bibliographystyle{plain}
\bibliography{references}
\end{document}